\documentclass[pdflatex,sn-mathphys-num]{sn-jnl}% Math and Physical Sciences Numbered Reference Style
\usepackage{graphicx}%
\usepackage{multirow}%

\usepackage{amsthm}%
\usepackage{mathrsfs}%
\usepackage[title]{appendix}%
\usepackage{xcolor}%
\usepackage{textcomp}%
\usepackage{manyfoot}%
\usepackage{booktabs}%
\usepackage{algorithm}%
\usepackage{algorithmicx}%
\usepackage{algpseudocode}%
\usepackage{listings}%
\usepackage{verbatim}
\usepackage{graphicx}
\usepackage{rotating}

\usepackage{adjustbox}%for rescaling tikzcd diagrams

\usepackage{tikz}
\usepackage{tikz-cd}
\usetikzlibrary{graphs,positioning}
\usetikzlibrary{backgrounds,circuits,circuits.ee.IEC,shapes,fit,matrix}

\tikzstyle{simple}=[-,line width=2.000]
\tikzstyle{arrow}=[-,postaction={decorate},decoration={markings,mark=at position .5 with {\arrow{>}}},line width=1.100]
\pgfdeclarelayer{edgelayer}
\pgfdeclarelayer{nodelayer}
\pgfsetlayers{edgelayer,nodelayer,main}

\tikzstyle{none}=[inner sep=0pt]

\tikzstyle{species}=[circle,fill=yellow,draw=black,scale=1.15]
\tikzstyle{transition}=[rectangle,fill=lblue,draw=black,scale=1.15]
\tikzstyle{inarrow}=[->, >=stealth, shorten >=.03cm,line width=1.5]
\tikzstyle{empty}=[circle,fill=none, draw=none]
\tikzstyle{inputdot}=[circle,fill=purple,draw=purple, scale=.25]
\tikzstyle{inputarrow}=[->,draw=purple, shorten >=.05cm]
\tikzstyle{simple}=[-,draw=purple,line width=1.000]

\usepackage{url}

\usepackage{xspace}
\usepackage{pslatex}

\usepackage{datetime}
\newdateformat{versiondate}{%
\THEMONTH\THEDAY}

\usepackage{amsthm}
\usepackage{amsmath} % flush equations left
\usepackage{amsfonts}
\usepackage{mathrsfs} 
\usepackage{amssymb}

\usepackage{mathtools}
\DeclarePairedDelimiter{\abs}{\lvert}{\rvert}
\usepackage[inline]{enumitem}
\usepackage{framed} %for framing of computational problems 

\usepackage{tabularx,booktabs}
\usepackage{xltabular}
\usepackage{subcaption}

\usepackage{todonotes}
\usepackage{environ}
\usetikzlibrary{quiver}
\usetikzlibrary{arrows}
\usetikzlibrary{cd}
\usetikzlibrary{fadings}
\usetikzlibrary{decorations.pathreplacing}
\usetikzlibrary{decorations.pathmorphing}
\tikzset{snake it/.style={decorate, decoration=snake}}

\usetikzlibrary{calc}

\newsavebox{\pullbacksquare}
\savebox{\pullbacksquare}{%
  \begin{tikzpicture}[scale=0.15]
    \draw (0,0) -- (1,0) -- (1,1);
  \end{tikzpicture}%
}
\theoremstyle{thmstyleone}
\newtheorem{theorem}{Theorem}[section]
\newtheorem{lemma}[theorem]{Lemma}
\newcommand{\define}[1]{\textbf{#1}}

\newcommand{\cat}[1]{\mathsf{#1}}
\newcommand{\typesetcategory}[1]{\cat{#1}}

\newcommand{\typesetoperator}[1]{\mathsf{#1}}

\DeclareMathOperator{\id}{\typesetoperator{id}}

\DeclareMathOperator{\Set}{\typesetcategory{Set}}

\DeclareMathOperator{\dom}{\typesetoperator{dom}}
\DeclareMathOperator{\cod}{\typesetoperator{cod}}
\DeclareMathOperator{\T}{\typesetcategory{T}}
\DeclareMathOperator{\D}{\typesetcategory{D}}

\newcommand{\cotw}[1]{#1^{\scriptstyle\triangleright}_{\scriptstyle\triangleleft}}
\newcommand{\cotwdom}{\cotw{\dom}}
\newcommand{\cotwcod}{\cotw{\cod}}
\newcommand{\cotwK}{\cotw{\K}}
\newcommand{\cotwP}{\cotw{\fP}}

\DeclareMathOperator{\cotwD}{\cotw{\cat{D}}}

\DeclareMathOperator{\K}{\mathscr{K}}
\DeclareMathOperator{\fP}{\mathscr{P}}

\DeclareMathOperator{\Pe}{\typesetcategory{Pe(T,D)}}
\DeclareMathOperator{\Cu}{\typesetcategory{Cu(T,D)}}
\DeclareMathOperator{\Nar}{\typesetcategory{Nar(T,\cotwD)}}
\DeclareMathOperator{\Vect}{\typesetcategory{Vect}}
\DeclareMathOperator{\CellSh}{\typesetcategory{CellSh}}
\newcommand{\inc}[1]{\typesetcategory{inc(#1)}}
\newcommand{\incG}{\inc{G}}
\newcommand{\incH}{\inc{H}}
\DeclareMathOperator*{\colim}{colim}

\usepackage{overpic}
\newcommand{\N}{\mathbb{N}}

\newcommand{\cG}{\mathcal{G}}

\newcommand{\Grph}{\textup{\textsf{Grph}}}

\newcommand{\Ob}{\textup{Ob}}

\newcommand{\op}{\textup{op}}

\usepackage{amsmath,amssymb,amsfonts}%

\newtheorem{proposition}[theorem]{Proposition}% 

\theoremstyle{thmstyletwo}%
\newtheorem{example}{Example}%
\newtheorem{remark}{Remark}%

\theoremstyle{thmstylethree}%
\newtheorem{definition}{Definition}%

\begin{document}

\title[Narratives for Time-Varying Data]{Time-Varying Data as Sheaves: an Invitation to Narratives}

\author*[1]{\fnm{Wilmer} \sur{Leal}}\email{wleal@ufl.edu}

\author[2]{\fnm{Benjamin Merlin} \sur{Bumpus}}\email{bumpus@usp.br}

\author[3]{\fnm{Jana K.} \sur{Nickel}}\email{jana@nickel-math.com}

\author[4]{\fnm{Johan} \sur{García}}\email{jfgarciava@unal.com}

\author[1]{\fnm{James} \sur{Fairbanks}}\email{fairbanksj@ufl.edu}

\author[1,5]{\fnm{Warren} \sur{Dixon}}\email{wdixon@\{ufl.edu,vt.edu\}}

\affil*[1]{%
  \orgdiv{Department of Mechanical and Aerospace Engineering},
  \orgname{University of Florida},
  \orgaddress{%
    \street{1064 Center Drive},
    \city{Gainesville},
    \postcode{32611-6250},
    \state{Florida},
    \country{USA}
  }
}

\affil[2]{\orgdiv{Instituto de Matemática e Estatística}, \orgname{Universidade de São Paulo}, \orgaddress{\street{Rua do Matão, 1010}, \city{São Paulo}, \postcode{05508--090}, \state{SP}, \country{Brasil}}}

\affil[3]{\orgdiv{Fachbereich Mathematik},
\orgname{Universität Hamburg},
\orgaddress{\street{Bundesstraße 55},
\city{Hamburg},
\postcode{20146},
\country{Germany}}}

\affil[4]{\orgdiv{Departamento de Matemáticas}, \orgname{Universidad Nacional de Colombia -- sede Medellín}, \orgaddress{\street{Calle 59A No. 63-20}, \city{Medellín}, \country{Colombia}}}

\affil[5]{%
\orgname{Virginia Tech},
\orgdiv{College of Engineering},
\orgaddress{
\city{Blacksburg},
\state{Virginia},
\postcode{24061},
\country{USA}
}}
%%==================================%%
%% Sample for unstructured abstract %%
%%==================================%%

\abstract{
    Modern science and engineering increasingly rely on time-varying data, yet the mathematical tools used to model temporal phenomena are often developed within separate disciplines, obscuring common principles and limiting the transfer of ideas across fields. This chapter presents the theory of narratives, an abstract framework for time-varying objects of any mathematical kind that supports both theoretical investigations and applications. To illustrate this perspective, the chapter develops three vignettes, each illustrating a different research direction. The first addresses a general concern: \textit{What information loss can occur when switching between different representations of temporal data?} The second concerns structural and algorithmic approaches: \textit{How can we systematically decompose time-varying data into simple pieces and obtain invariants describing its structural complexity?} The third is an application to control theory: \textit{How can we model multi-agent systems with switching communication topologies?} More important than any individual vignette, the central message of this invitation is that a suitable abstract perspective can organize and guide research across remarkably diverse mathematical and scientific domains.

}

\keywords{Time-Varying Data, Temporal Networks, Categories of Narratives, Temporal Data Structures, Persistent and Cumulative Data, Structured Decompositions, Cellular Sheaves, Multi-Agent Systems, Control Theory}

%%\pacs[JEL Classification]{D8, H51}

\pacs[MSC Classification]{
18A40, % Adjoint functors, limits and colimits
18F20, % Categories of sheaves
18A25, % Diagram schemes, limits and colimits (including directed colimits)
05C90, % Applications of graph theory
37B55, % Applications of topological and symbolic dynamics (broad dynamical systems)
93C85  % Automated systems and control (networks, distributed systems, etc.)
}

\maketitle

\tableofcontents
\newpage

\section{Introduction}

Modern science and engineering are increasingly driven by the analysis of time-varying data. From social dynamics~\cite{miritello2013temporal}, environmental science~\cite{CHOI2021105189}, and public health~\cite{MELIKER20111,niu2026temporal} to distributed control of multi-agent systems~\cite{MesbahiEgerstedt2010,OlfatiSaber2007}, language evolution~\cite{Teich2025}, and the computational history of science~\cite{Laubichler2013,llanos19}, observations rarely describe static objects; rather, they capture the evolution of phenomena through time~\cite{Atluri2018}. Despite the ubiquity of such data, the mathematical tools used to analyze them remain largely fragmented~\cite{LaxmanSastry2006}, with different communities developing specialized theories for temporal data mining, temporal databases, temporal networks, and related domains~\cite{habereder2025,LaxmanSastry2006,RoddickSpiliopoulou2002,RODDICK1992249}. Many existing approaches represent temporal information as sequences of measurements or snapshots of evolving systems, often without a coherent mathematical structure describing how these observations relate across time. As a result, integrating different representations of temporal data, or reasoning systematically about their transformations, remains a significant challenge. The theory of narratives introduced in~\cite{theorytimevaryingdata2026} addresses this problem by representing time-varying data as sheaves and cosheaves over categories of time intervals. A persistent narrative records information that remains valid throughout an interval, while a cumulative narrative records information that accumulates over it. Because narratives are categorical objects, the framework describes not only time-varying data but also maps between them, and it applies to data valued in categories of sets, graphs, topological spaces, cellular sheaves, and many other structured objects. This chapter is intended as an invitation to the theory of narratives. Through a survey of recent developments, we hope to illustrate how narratives provide a common framework in which ideas from many areas of mathematics can contribute to the study of time-varying data, through both abstract theory and concrete applications.

The theory of narratives was designed to address five requirements that arise when one seeks a general mathematical language for time-varying data:

\begin{enumerate}[label=\textbf{(D\arabic*)}]
\item (\textbf{Categories of Temporal Data}) Any theory of temporal data should define not only time-varying data, but also appropriate morphisms thereof.\label{desideratum-1}
\item (\textbf{Cumulative and Persistent Perspectives}) In contrast to being a mere sequence, temporal data should explicitly record whether it is to be viewed cumulatively or persistently. Furthermore, there should be methods of conversion between these two viewpoints.\label{desideratum-2}
\item (\textbf{Systematic ``Temporalization''}) Any theory of temporal data should come equipped with systematic ways of obtaining temporal analogues of notions relating to static data.\label{desideratum-3}
\item (\textbf{Object Agnosticism}) Theories of temporal data should be object agnostic and applicable to any kinds of data originating from given underlying dynamics.\label{desideratum-4}
\item (\textbf{Sampling}) Since temporal data arises from an underlying dynamical system, any theory of temporal data should be interoperable with theories of dynamical systems.\label{desideratum-5}
\end{enumerate}

This chapter collects three recent developments in the theory of narratives. Each is presented as a vignette centered on a different question: (1) how much information is preserved when one changes between persistent and cumulative viewpoints, (2) how the structural complexity of time-varying data can be measured, and (3) how narratives can be used to model multi-agent systems whose interaction structure changes through time. Together, these examples illustrate the broader goals of our research program:  by studying time-varying data systematically and abstractly, we can use a unified language to describe many different kinds of questions concerning time-varying data.

The first vignette concerns the adjunction between persistent and cumulative narratives. Since this adjunction is not, in general, an equivalence, converting a narrative from one viewpoint to the other and back may lose information. We introduce narratives valued in a cotwisted arrow category, which encode a persistent component, a cumulative component, and a comparison between them. We then use this category to factorize the persistence--accumulation adjunction and to classify narratives according to whether they can be recovered after either of the two possible round trips.

The second vignette concerns decompositions of time-varying data. Complex objects are often studied by expressing them as composites of smaller pieces and recording how these pieces overlap. Such decompositions also give rise to measures of structural complexity. In the temporal setting, the pieces must themselves vary through time: decomposing each snapshot independently does not record how the pieces persist, merge, split, or disappear. Building on recent work~\cite{decomposingtimevaryingdata2026}, we explain how theories of structured decompositions for static objects can be lifted to persistent narratives. The resulting framework yields decompositions whose pieces are time-varying objects and yields temporal analogues of structural invariants such as tree-width.

The third vignette applies narratives to multi-agent systems whose communication topology changes through time. By taking cellular sheaves as the category of values, a narrative can record both the changing interaction graph and the local state spaces, sensing maps, and compatibility constraints carried by its vertices and edges. The resulting model of a switching system contains substantially more information than a sequence of communication graphs. It also provides a setting in which the relationship between an underlying dynamical system and the temporal data sampled from it can be studied using the same categorical language.

\section{Background on Narratives: How to Model Time-Varying Data using Sheaves}\label{sec:foundations}

\subsection{Categories of temporal data}
We begin by recalling background on \textit{narratives}: objects introduced in~\cite{theorytimevaryingdata2026} representing time-varying objects valued in any sufficiently nice category. Narratives are (co)sheaves over categories of time intervals---we call these \textit{time categories}---which model temporal data by recording how temporal information relates across overlapping intervals of time.

The thesis of \cite{theorytimevaryingdata2026} is that temporal data should be understood not merely as a sequence of observations, but as a structured system of relations connecting observations across time. For example, a time-varying graph may consist of a family of graphs $G_t$ indexed by time together with morphisms describing how vertices and edges persist, merge, or disappear as time evolves. Narratives capture precisely these additional relations across time. Rather than representing time merely as a totally ordered set, the framework uses categories of intervals, which naturally encode the inclusion relations between time periods.

\begin{definition}[Time Categories]
A \define{time category} is any sub-join-semilattice of $\cat{I}$ or $\cat{I}_\mathbb{N}$, where $\cat{I}$ (resp. $\cat{I}_\mathbb{N}$) is the category of closed intervals in $\mathbb{R}$ (resp. $\mathbb{N}$) with inclusions as morphisms.
\end{definition}
\begin{remark}[Alternative models of time]
    Although we focus on interval categories in this chapter, the categorical viewpoint naturally accommodates more general notions of time. For instance, one could replace intervals by a finite branching tree to model branching temporal evolutions, such as the history of a Git repository or the multiple futures envisioned in Borges' \emph{The Garden of Forking Paths}~\cite{Borges1941}. Developing such generalized notions of time is an interesting direction for future research.
\end{remark}

\begin{example}[Finite time categories]\label{ex:finite-time-categories} If \(\cat T\) is a time category and \([a,b]\in\cat T\), then the slice category \(\cat T/[a,b]\), consisting of the intervals contained in \([a,b]\), is itself a time category, and the canonical inclusion \(\cat T/[a,b]\hookrightarrow\cat T\) restricts the temporal domain to the interval \([a,b]\). In particular, for every \(n\ge0\), we write \(\cat T_n:=\cat I_{\mathbb N}/[0,n]\). Its objects are the intervals \([i,j]\subseteq[0,n]\), ordered by inclusion, so \(\cat T_n\) models temporal data consisting of \(n+1\) snapshots together with every interval determined by them. For example, \(\cat T_2\) consists of the six intervals contained in \([0,2]\): 
% https://q.uiver.app/#q=WzAsNixbMCwyLCJbMCwwXSJdLFsyLDIsIlsxLDFdIl0sWzQsMiwiWzIsMl0iXSxbMSwxLCJbMCwxXSJdLFszLDEsIlsxLDJdIl0sWzIsMCwiWzAsMl0iXSxbMCwzLCIiLDAseyJzdHlsZSI6eyJ0YWlsIjp7Im5hbWUiOiJob29rIiwic2lkZSI6InRvcCJ9fX1dLFsxLDQsIiIsMCx7InN0eWxlIjp7InRhaWwiOnsibmFtZSI6Imhvb2siLCJzaWRlIjoidG9wIn19fV0sWzIsNCwiIiwyLHsic3R5bGUiOnsidGFpbCI6eyJuYW1lIjoiaG9vayIsInNpZGUiOiJib3R0b20ifX19XSxbMyw1LCIiLDIseyJzdHlsZSI6eyJ0YWlsIjp7Im5hbWUiOiJob29rIiwic2lkZSI6InRvcCJ9fX1dLFs0LDUsIiIsMix7InN0eWxlIjp7InRhaWwiOnsibmFtZSI6Imhvb2siLCJzaWRlIjoiYm90dG9tIn19fV0sWzEsMywiIiwyLHsic3R5bGUiOnsidGFpbCI6eyJuYW1lIjoiaG9vayIsInNpZGUiOiJib3R0b20ifX19XV0=
\[\begin{tikzcd}
	&& {[0,2]} && \\
	& {[0,1]} && {[1,2]} \\
	{[0,0]} && {[1,1]} && {[2,2]}
	\arrow[hook, from=2-2, to=1-3]
	\arrow[hook', from=2-4, to=1-3]
	\arrow[hook, from=3-1, to=2-2]
	\arrow[hook', from=3-3, to=2-2]
	\arrow[hook, from=3-3, to=2-4]
	\arrow[hook', from=3-5, to=2-4]
\end{tikzcd}\]
\end{example}

\begin{example}[Changing temporal resolution]\label{ex:changing-temporal-resolution}
Suppose that \([a,b]\) is an interval in a time category \(\cat T\), and choose a partition \(a=r_0\le r_1\le\cdots\le r_n=b\). This partition determines a unique functor
\[
R:\cat T_n\longrightarrow\cat T/[a,b],
\qquad
R([i,j])=[r_i,r_j].
\]
The finite time category \(\cat T_n\) models the temporal resolution of \([a,b]\) determined by the chosen snapshots \(r_0,\ldots,r_n\): the objects \([i,i]\) correspond to the snapshots, while the objects \([i,j]\) correspond to the intervals from \(r_i\) to \(r_j\).
\end{example}

To speak of sheaves, one must first have a \textit{site}, that is, a category equipped with a \textit{coverage}, where, intuitively, a coverage amounts to a systematic way of defining what it means for an object to be ``covered'' by opens. As shown in \cite{theorytimevaryingdata2026}, time categories become sites when equipped with the Johnstone coverage~\cite{johnstone1999note}. Under this coverage, a \textbf{cover} of an interval $[\ell,\ell']$ is generated by a partition into two closed intervals
\(([\ell,p],[p,\ell']).\)
This reflects the idea that information about a time interval can be reconstructed from information on adjacent subintervals.

\paragraph{Narratives}

As we alluded to earlier, narratives---our model for temporal data---consist of sheaves or cosheaves over time categories (viewed as sites, as above). The choice between sheaves and cosheaves is a modelling decision: should the model track data that \emph{persists} over time (sheaves) or data that \emph{accumulates} over time (cosheaves)?

\begin{definition}[$\cat{T}$-sheaves and $\cat{T}$-cosheaves]\label{prop:def:sheaves}
Let $\cat{T}$ be a time category equipped with the Johnstone coverage. If $\cat{D}$ has pullbacks, a \define{$\cat{D}$-valued sheaf on $\cat{T}$} is a presheaf $F:\cat{T}^{op}\to\cat{D}$ such that
\[
F([a,b])\cong F([a,p]) \times_{F([p,p])} F([p,b]).
\]
Dually, if $\cat{D}$ has pushouts, a \define{$\cat{D}$-valued cosheaf on $\cat{T}$} is a copresheaf $\hat F:\cat{T}\to\cat{D}$ such that for any interval $[a,b]$ and cover $([a,p],[p,b])$,
\[
\hat F([a,b])\cong \hat F([a,p]) +_{\hat F([p,p])} \hat F([p,b]).
\]
\end{definition}

\begin{definition}[Narratives]
Let $\cat{T}$ be a time category and $\cat{D}$ a category with pullbacks (resp. pushouts). The category of \define{persistent $\cat{D}$-narratives}, denoted $\mathsf{Pe}(\cat{T},\cat{D})$ (resp. \define{cumulative $\cat{D}$-narratives}, denoted $\mathsf{Cu}(\cat{T},\cat{D})$) consists of $\cat{D}$-valued sheaves (resp. cosheaves) on $\cat{T}$.
\end{definition}

A persistent narrative models information that must remain valid throughout a time interval. The sheaf condition ensures that such information over a larger interval can be reconstructed from data that \emph{persist} across overlapping sub-intervals. In contrast, a cumulative narrative models information that \emph{accumulates} over time, with the cosheaf condition expressing that data associated with a longer interval arises from aggregating the data of its sub-intervals. In both cases, a narrative encodes not only observations made at individual moments in time but also the relations describing how these observations evolve and interact across time intervals.

\begin{remark}
Since persistent narratives are sheaves, their categories can carry an internal intuitionistic logic. In particular, when the target category is \(\Set\), or more generally a presheaf category \(([\cat{C},\Set])\), categories of discrete persistent narratives form Grothendieck topoi. In~\cite{niu2026temporal}, this structure is used to define temporal truth values and a propositional language for reasoning about properties that hold at specific times or throughout intervals. The resulting logic is applied to public health models, where it can express temporal properties of individuals and conditions and support reasoning about possible routes of disease transmission in time-varying contact networks.
\end{remark}

\subsection{Changing Perspectives on Temporal Data:
The Persistence--\\Accumulation Adjunction}
\label{subsection:cumulative-and-persistent-perspectives}

Cumulative and persistent narratives are related by the following adjunction~\cite{theorytimevaryingdata2026}, which describes how one may change perspective between persistent and cumulative descriptions of temporal data. 
\begin{theorem}[Adjunction between cumulative and persistent narratives~\cite{theorytimevaryingdata2026}]
\label{thm:adjunction}
Let $\cat{D}$ be a category with limits and colimits and $\cat{T}$ a time category. There exist functors $\mathscr K:\mathsf{Pe}(\cat{T},\cat{D})\to\mathsf{Cu}(\cat{T},\cat{D})$ and $\mathscr P:\mathsf{Cu}(\cat{T},\cat{D})\to\mathsf{Pe}(\cat{T},\cat{D})$ forming an adjunction:
% https://q.uiver.app/#q=WzAsMixbMCwwLCJcXG1hdGhzZntQZX0oXFxjYXR7VH0sIFxcbWF0aHNme0R9KSJdLFsyLDAsIlxcbWF0aHNme0N1fShcXGNhdHtUfSwgXFxtYXRoc2Z7RH0pIl0sWzAsMSwie1xcbWF0aHNjcntLfX0iLDAseyJjdXJ2ZSI6LTJ9XSxbMSwwLCJ7XFxtYXRoc2Nye1B9fSIsMCx7ImN1cnZlIjotMn1dLFsyLDMsIiIsMix7ImxldmVsIjoxLCJzdHlsZSI6eyJuYW1lIjoiYWRqdW5jdGlvbiIsImJvZHkiOnsibmFtZSI6Im5vbmUifSwiaGVhZCI6eyJuYW1lIjoibm9uZSJ9fX1dXQ==
\[\begin{tikzcd}
	{\mathsf{Pe}(\cat{T}, \D)} && {\mathsf{Cu}(\cat{T}, \D).}
	\arrow[""{name=0, anchor=center, inner sep=0}, "{{\mathscr{K}}}", curve={height=-12pt}, from=1-1, to=1-3]
	\arrow[""{name=1, anchor=center, inner sep=0}, "{{\mathscr{P}}}", curve={height=-12pt}, from=1-3, to=1-1]
	\arrow["\dashv"{anchor=center, rotate=-90}, draw=none, from=0, to=1]
\end{tikzcd}\]
\end{theorem}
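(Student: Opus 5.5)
Following \cite{theorytimevaryingdata2026}, I would build both functors by transporting data along the \emph{interval-reversal} structure of $\cat T$ and then check the adjunction pointwise. For a persistent narrative $F:\cat T^{op}\to\cat D$, set
\[
\mathscr K(F)([a,b]) := \colim_{[p,p]\subseteq[a,b]} F([p,p]),
\]
where the colimit is taken over the diagram of all subintervals of $[a,b]$ together with the restriction maps of $F$ reversed via the inclusions of points into subintervals. Equivalently, it is the colimit of $F$ restricted to $(\cat T/[a,b])^{op}$. Dually, for a cumulative narrative $\hat F:\cat T\to\cat D$, set
\[
\mathscr P(\hat F)([a,b]) := \lim_{[c,d]\supseteq [a,b]} \hat F([c,d]).
\]
This is the limit over the coslice of intervals containing $[a,b]$, the right Kan extension-like construction along the reversal $\cat T^{op}\to\cat T$. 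The first step is functoriality: an inclusion $[a,b]\subseteq[a',b']$ induces maps of (co)limits, because the indexing diagrams are nested (subintervals of $[a,b]$ sit inside those of $[a',b']$, and supersets of $[a',b']$ sit inside supersets of $[a,b]$). Hence $\mathscr K(F)$ is covariant and $\mathscr P(\hat F)$ is contravariant. Functoriality in morphisms of narratives follows from the universal properties.

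The second step is the (co)sheaf conditions. For $\mathscr P(\hat F)$ and a cover $([a,p],[p,b])$ of $[a,b]$, I would show that the indexing posets interact so that limits over supersets of $[a,b]$ coincide with the pullback of limits over supersets of $[a,p]$ and $[p,b]$ glued over supersets of $[p,p]$. This is a cofinality/limit-commutation argument: an interval containing both $[a,p]$ and $[p,b]$ contains $[a,b]$, since $\cat T$ is a join-semilattice. The cosheaf case is dual, using that a subinterval of $[a,b]$ lies in $[a,p]$ or $[p,b]$, or else straddles $p$. This is the step I expect to be the main obstacle. Straddling intervals must be handled by showing the relevant comparison is final, and the argument may in fact require reworking the definitions (for instance, taking colimits over subintervals in the cumulative direction and limits in the persistent one, or sheafifying after a Kan extension). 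If a direct check fails, I would fall back on defining $\mathscr K$ as cosheafification of the left Kan extension and $\mathscr P$ as sheafification of the right Kan extension. I would then invoke the general existence of (co)sheafification for complete and cocomplete $\cat D$ over these simple sites, where covers are binary.

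The final step is the adjunction itself. I would exhibit a natural bijection
\[
\mathrm{Hom}_{\mathsf{Cu}}(\mathscr K F,\hat G)\cong \mathrm{Hom}_{\mathsf{Pe}}(F,\mathscr P\hat G).
\]
A map out of the colimit $\mathscr K F([a,b])$ is a compatible family $F([c,d])\to \hat G([a,b])$ for $[c,d]\subseteq[a,b]$. A map into the limit $\mathscr P\hat G([c,d])$ is a compatible family $F([c,d])\to\hat G([a,b])$ for $[a,b]\supseteq[c,d]$. Both are therefore indexed by the same set of pairs $[c,d]\subseteq[a,b]$ subject to the same naturality constraints, which gives the bijection. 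Naturality in $F$ and $\hat G$ is routine. Since $\mathsf{Pe}$ and $\mathsf{Cu}$ are full subcategories, no further conditions are needed; if sheafification was used, I would compose with the sheafification adjunctions.
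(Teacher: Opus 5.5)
Your $\K$ matches the paper's (the colimit of $F$ over $(\cat T/[a,b])^{op}$). Note, though, that your first formula, a colimit over the points $[p,p]\subseteq[a,b]$ alone, is just a coproduct and is \emph{not} equivalent to it. Your $\fP$ does not match the paper's, and this breaks the construction. You take the limit over the coslice $[a,b]/\cat T$ of intervals \emph{containing} $[a,b]$. That coslice has $[a,b]$ as an initial object, so the limit is simply $\hat F([a,b])$. Moreover, for $[a,b]\subseteq[a',b']$ the supersets of $[a',b']$ form a subcategory of the supersets of $[a,b]$, so restricting cones yields $\fP\hat F([a,b])\to\fP\hat F([a',b'])$. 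That map is covariant, not contravariant as you assert. So your $\fP\hat F$ is just $\hat F$ again, not a presheaf, and $\fP$ does not land in $\mathsf{Pe}(\cat T,\cat D)$. The paper's definition \eqref{eq:P-definition} takes the limit over the \emph{slice} $\cat T/[a,b]$ of subintervals, exactly as for $\K$. Then $\cat T/[a,b]\subseteq\cat T/[c,d]$ gives the restriction $\fP\hat F([c,d])\to\fP\hat F([a,b])$.

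Your adjunction step was built around the wrong $\fP$ and does not survive the correction. With the correct $\fP$, a map $F([c,d])\to\fP\hat G([c,d])$ is a compatible family $F([c,d])\to\hat G([a,b])$ for $[a,b]\subseteq[c,d]$. A map $\K F([a,b])\to\hat G([a,b])$ is a family $F([c,d])\to\hat G([a,b])$ for $[c,d]\subseteq[a,b]$. The pairs run in opposite directions, so the two sides are not ``indexed by the same set''.

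The missing idea is that naturality collapses both sides to diagonal components $\alpha_J\colon F(J)\to\hat G(J)$. On the left, the $(J\subseteq I)$-component is forced to be $\hat G(J\subseteq I)\circ\alpha_J$. On the right, the $(J'\subseteq J)$-component is forced to be $\alpha_{J'}\circ F(J'\subseteq J)$. In both cases the cocone/cone conditions say exactly that $\alpha_J=\hat G(i)\circ\alpha_{J'}\circ F(i)$ for every inclusion $i\colon J'\subseteq J$, which is the cotwisted condition of Vignette~1.

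The (co)sheaf conditions, which you leave open, are also easy once $\fP$ is fixed, and need no (co)sheafification. If $[c,d]\in\cat T/[a,b]$ straddles $p$, it contains $[p,p]$. Each subinterval $l$ of $[c,d]$ not straddling $p$ is linked to $[p,p]$ via $l\vee[p,p]\in\cat T$, which still lies in $[c,d]$ and does not straddle $p$. So the subposet $\cat T/[a,p]\cup\cat T/[p,b]$ is initial in $\cat T/[a,b]$. Its two pieces meet only in $[p,p]$, so $\fP\hat F([a,b])\cong\fP\hat F([a,p])\times_{\hat F([p,p])}\fP\hat F([p,b])$, and dually for $\K F$, for arbitrary (co)presheaves.

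Your fallback would not have worked in any case. Sheafification is \emph{left} adjoint to $\mathsf{Pe}(\cat T,\cat D)\hookrightarrow[\cat T^{op},\cat D]$, and cosheafification is \emph{right} adjoint to $\mathsf{Cu}(\cat T,\cat D)\hookrightarrow[\cat T,\cat D]$. Post-composing them onto the right adjoint $\fP$, resp.\ the left adjoint $\K$, does not preserve the adjunction in general. Their existence for an arbitrary complete and cocomplete $\cat D$ is also not automatic.
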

The functor \(\K\) sends a persistent narrative \(F:\T^{op}\to\D\) to its canonical cumulative counterpart \(\K F\). For each interval \([a,b]\in\T\), its value is defined by

\begin{equation}\label{eq:K-definition}
(\K F)_a^b
:=
\colim\Bigl(
(\T/[a,b])^{op}
\hookrightarrow
\T^{op}
\xrightarrow{F}
\D
\Bigr).
\end{equation}
Thus, \((\K F)_a^b\) accumulates the persistent data assigned by \(F\) to the subintervals of \([a,b]\). An inclusion \(i:[a,b]\hookrightarrow[c,d]\) induces a morphism
\[ \K F(i):(\K F)_a^b\longrightarrow(\K F)_c^d \]
by the universal property of the corresponding colimits. Thus, \(\K F(i)\) records how accumulated data over a smaller time window contribute to the accumulated data over a larger one. Dually, the functor \(\fP\) sends a cumulative narrative \(\hat F:\T\to\D\) to its canonical persistent counterpart. For each interval \([a,b]\in\T\), its value is defined by 

\begin{equation}\label{eq:P-definition}
(\fP\hat F)_a^b
:=
\lim\Bigl(
\T/[a,b]
\hookrightarrow
\T
\xrightarrow{\hat F}
\D
\Bigr).
\end{equation}
Thus, \((\fP\hat F)_a^b\) extracts the data compatible across the
subintervals of \([a,b]\). An inclusion \(i:[a,b]\hookrightarrow[c,d]\) induces a morphism 

\[ \fP\hat F(i): (\fP\hat F)_c^d \longrightarrow (\fP\hat F)_a^b \]
by the universal property of the corresponding limits. The adjunction is equipped with a unit and counit 
\[ \eta:\id_{\Pe}\Longrightarrow\fP\K \qquad\text{and}\qquad \varepsilon:\K\fP\Longrightarrow\id_{\Cu}. \]
For a persistent narrative \(F\), the component \( \eta_F:F\longrightarrow\fP\K F \) compares the prescribed persistent data with the persistent narrative reconstructed after passing through its cumulative completion. Dually, for a cumulative narrative \(\hat F\), the component \( \varepsilon_{\hat F}:\K\fP\hat F\longrightarrow\hat F \) compares the cumulative narrative reconstructed from its persistent completion with the original cumulative data.

The functors \(\K\) and \(\fP\) do not, in general, form an equivalence of categories. Consequently, passing from one perspective to the other and back need not recover the original temporal structure. Instead, the adjunction replaces the original data by the canonical objects determined by the corresponding colimit and limit constructions. The unit and counit therefore measure the extent to which the persistent and cumulative descriptions agree with these canonical reconstructions. A natural problem is therefore to characterize those temporal structures for which no information is lost. Equivalently, one may ask when the unit or counit of the adjunction is an isomorphism, or, more generally, which narratives are fixed points of the persistence--accumulation adjunction. The first research direction developed in Section~\ref{subsec:fixed-points} is devoted to these questions.

\subsection{Systematic Temporalization}\label{sec:temporalization}

We now explain how \cite{theorytimevaryingdata2026} addresses Desideratum~\ref{desideratum-3}, namely the requirement that a theory of temporal data provide systematic procedures for producing temporal analogues of familiar static notions. The guiding observation is that the static theories we care about (graphs, groups, spaces, etc.) are far more mature than their temporal counterparts, and that many static properties are most naturally expressed \emph{categorically}: either as membership in a subcategory, or as the existence of morphisms from a class of ``test objects'' (paths, cliques, colorings, and so on). Once phrased categorically, these notions admit canonical lifts to narrative categories.

\paragraph{Changing data types functorially.}
A first ingredient is that narratives are functorial in the data category. If $\cat{C}$ and $\cat{D}$ are categories of static data and $K:\cat{C}\to\cat{D}$ is a data-conversion functor, then postcomposition transports $\cat{C}$-valued narratives to $\cat{D}$-valued narratives whenever $K$ preserves the universal constructions that define the (co)sheaf conditions. Concretely, if $K$ is continuous, then for any time category $\cat{T}$ the assignment $F\mapsto K\circ F$ defines a functor $(K\circ-):\cat{Pe}(\cat{T},\cat{C})\to\cat{Pe}(\cat{T},\cat{D})$, and dually if $K$ preserves colimits then $(K\circ-)$ sends cumulative $\cat{C}$-narratives to cumulative $\cat{D}$-narratives. There is also a contravariant form: if $K:\cat{C}^{op}\to\cat{D}$ takes limits to colimits (resp.\ colimits to limits), then composition with $K$ switches perspectives, producing a functor from persistent to cumulative narratives (resp.\ cumulative to persistent). In this way, standard static constructions can be ``temporalized'' and transported across data types in a principled, functorial manner.

\paragraph{Lifting static properties by change of base.}
The second ingredient is a systematic way to lift \emph{classes of static objects} to \emph{classes of narratives}. Suppose a static property is presented by a subcategory inclusion $P:\cat{P}\hookrightarrow\cat{C}$ (one should think of $\cat{P}$ as the full subcategory of objects of $\cat{C}$ satisfying the property). If $P$ is continuous, then the induced functor $(P\circ-)$ identifies those persistent $\cat{C}$-narratives whose values lie in $\cat{P}$ in a way compatible with the sheaf condition; dually, if $P$ preserves colimits, it specifies the corresponding class of cumulative narratives. This reproduces, in the temporal setting, the common pattern from static combinatorics and algebra: properties become subcategories, and temporal analogues become subcategories of narrative categories.

\paragraph{Temporal analogues at a chosen resolution.}
While change of base yields a clean lift, it can be too strict in applications: one may only care that a property appears at a particular temporal granularity. The framework in~\cite{theorytimevaryingdata2026}  accounts for this by introducing a functorial method for changing temporal resolution. Given a sub-join-semilattice inclusion $\tau:\cat{S}\hookrightarrow\cat{T}$, precomposition defines a restriction functor $(-\circ\tau)$ sending $\cat{T}$-narratives to $\cat{S}$-narratives (persistent-to-persistent and cumulative-to-cumulative). By combining restriction with change of base, one obtains a general notion of a narrative \emph{satisfying a static property only on the intervals in $\cat{S}$}: rather than requiring the property at all intervals, one imposes it after restricting to $\cat{S}$, and defines the resulting class of narratives by a pullback that expresses compatibility between ``restriction to $\cat{S}$'' and ``landing in $\cat{P}$''. This produces a family of temporal analogues parametrized by resolution, reflecting the empirical reality that certain phenomena are only visible after aggregating over sufficiently large time windows.

\begin{example}[Temporal resolution]
Recall from Example~\ref{ex:changing-temporal-resolution} that a partition
\(a=r_0\le\cdots\le r_n=b\) determines a functor \(R:\cat T_n\longrightarrow\cat T/[a,b]\). Consequently, every \(\D\)-valued narrative
\(F:(\cat T/[a,b])^\op\to\D\)
restricts along \(R\) to the narrative
\[
F\circ R^\op:\cat T_n^\op\longrightarrow\D.
\]
The resulting narrative is the restriction of \(F\) to the temporal resolution induced by the chosen partition.
\end{example}

\paragraph{Graph-theoretic case studies: paths, cliques, and dualities.}
To demonstrate the machinery recovers familiar temporal notions while adding conceptual clarity, \cite{theorytimevaryingdata2026} develops graph-theoretic case studies. Path-like behavior is obtained by lifting the subcategory of paths in $\cat{Grph}$ to a corresponding class of graph narratives; temporal paths in a graph narrative can then be expressed as subobjects of that narrative, recovering the static viewpoint in which many decision problems are homomorphism problems. Temporal cliques are treated similarly: by lifting the subcategory of complete graphs and combining it with a resolution restriction (e.g., requiring completeness only over intervals of length at least $n$), one recovers standard definitions of temporal $k$-cliques from the temporal-graph literature, but now characterized by morphisms of narratives. This categorical reformulation also exposes dualities: just as cliques and colorings are related by categorical duality in the static setting, temporal analogues inherit corresponding dual notions, and these dualities depend on whether one works cumulatively or persistently.

A key conceptual payoff is that these temporalizations are not ad hoc: they arise from a small number of functorial principles (change of base, change of resolution, and categorical characterizations by morphisms). Moreover, the interaction with the persistent--cumulative adjunction highlights genuine temporal subtleties: for instance, properties defined by stability under pushouts may behave differently from those defined by stability under pullbacks, and changing perspective can therefore change which temporal analogues exist or are well behaved.

\section{Three Vignettes on Time-Varying Data and the Ensuing Research Directions}\label{sec:directions}

\subsection{Vignette 1: Narratives and the Fixed Points of the Persistence--Accumulation Adjunction}\label{subsec:fixed-points}
The functor \(\K\) sends a persistent narrative 
\(F\colon \cat{T}^{op}\to\cat{D}\) 
to its cumulative counterpart \(\K F\) by forming pushouts that encode data accumulated over a given time window. 
Applying \(\fP \) then returns to the persistent viewpoint by computing pullbacks, producing \(\fP \K F\). 
In general, this pushout--pullback round trip does not recover the original persistent sheaf. 
More conceptually, because \(\K \) and \(\fP \) form an adjunction rather than an equivalence, the composite \(\fP \K \) need not act as the identity on persistent narratives. 
Equivalently, the unit 
\(\eta_F\colon F\to\fP \K F\) 
may fail to be an isomorphism. Thus, passing between these viewpoints may approximate the original temporal structure by its associated limit and colimit constructions. Figure~\ref{fig:counit_example} illustrates this phenomenon by exhibiting a persistent narrative for which the unit \(\eta_F:F\to\fP \K F\) is not an isomorphism.

\begin{figure}[hbt]
    \centering
    \includegraphics[width=\linewidth]{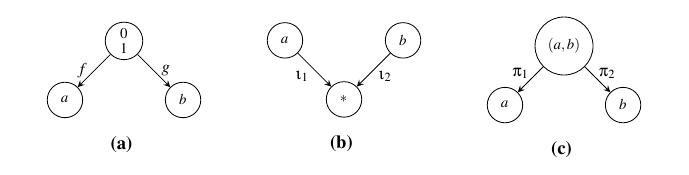}
    \caption{An example showing that the adjunction \(\K \dashv\fP \) is not an equivalence. (a) Let \(\cat{T}\) be the time category \([0,0]\hookrightarrow[0,1]\hookleftarrow[1,1]\). Consider the persistent narrative \(F\colon\cat{T}^{op}\to\cat{Set}\) shown in the figure. Its value at the interval \([0,1]\) is \(F_0^1=\{0,1\}\). (b) Applying \(\K \) yields a cumulative narrative with \((\K F)_0^1=F_0^0+_{F_0^1}F_1^1=\{a\}+_{\{0,1\}}\{b\}\), which in this case is the singleton \(\{\ast\}\). (c) Applying \(\fP \) produces a persistent narrative with \((\fP \K F)_0^1=F_0^0\times_{(\K F)_0^1}F_1^1\cong\{(a,b)\}\). Since \(\{(a,b)\}\not\cong\{0,1\}\), the component \(\eta_{F_0^1}:F_0^1\to(\fP \K F)_0^1\) is not an isomorphism. Hence \(\fP \K F\not\cong F\).}
    \label{fig:counit_example}
\end{figure}

The significance of Figure~\ref{fig:counit_example} is not merely that the unit fails to be invertible, but why it fails. The span \(F_0^0\xleftarrow{}F_0^1\xrightarrow{}F_1^1\) defining the persistent narrative is \emph{prescribed data} and therefore need not satisfy any universal property. By contrast, the span \(F_0^0\xleftarrow{}F_0^0\times_{(\K F)_0^1}F_1^1\xrightarrow{}F_1^1\) is \emph{built} by taking the pullback of the cospan \(F_0^0\to(\K F)_0^1\leftarrow F_1^1\). The unit
\[
\eta_{F_0^1}:F_0^1\longrightarrow (\fP \K F)_0^1
\]
is the canonical comparison between the prescribed persistent data and its canonical approximation via the pushout--pullback round trip. 

This observation leads to a natural characterization of the fixed points of the adjunction in the direction \(F\to\fP \K F\). By a \emph{fixed point} we mean a persistent narrative \(F\) for which the unit \(\eta_F:F\to\fP \K F\) is an isomorphism. Equivalently, as illustrated by the diagram below, this requires the span \(F_0^0\xleftarrow{}F_0^1\xrightarrow{}F_1^1\) to coincide with the pullback of its pushout \(F_0^0\to(\K F)_0^1\leftarrow F_1^1\). 

% https://q.uiver.app/#q=WzAsNCxbMSwwLCJGXzBeMSJdLFsyLDEsIkZfMV4xIl0sWzAsMSwiRl8wXjAiXSxbMSwyLCIoXFxLIEYpXzBeMSJdLFswLDFdLFswLDJdLFsxLDNdLFsyLDNdLFszLDAsIiIsMSx7InN0eWxlIjp7Im5hbWUiOiJjb3JuZXIifX1dLFswLDMsIiIsMSx7ImxhYmVsX3Bvc2l0aW9uIjoyMCwic3R5bGUiOnsibmFtZSI6ImNvcm5lciJ9fV1d
\[\begin{tikzcd}
	& {F_0^1} & \\
	{F_0^0} && {F_1^1} \\
	& {(\K F)_0^1}
	\arrow[from=1-2, to=2-1]
	\arrow[from=1-2, to=2-3]
	\arrow["\lrcorner"{anchor=center, pos=0, rotate=-45}, draw=none, from=1-2, to=3-2]
	\arrow[from=2-1, to=3-2]
	\arrow[from=2-3, to=3-2]
	\arrow["\lrcorner"{anchor=center, pos=0, rotate=135}, draw=none, from=3-2, to=1-2]
\end{tikzcd}\]

    This characterization immediately yields a broad class of fixed points whenever the ambient category \(\D\) is adhesive. Recall that in an adhesive category, pushouts along monomorphisms are Van Kampen~\cite{LackAdhesive}, and hence are stable under pullback. Consequently, if the span \(F_0^0\xleftarrow{}F_0^1\xrightarrow{}F_1^1\) consists of monomorphisms, then the associated pushout square is also a pullback. Therefore, \(F\) is a fixed point of the adjunction in the direction \(F\to\fP \K F\).

The discussion above concerns the unit of the adjunction and the recovery of persistent data. Dually, one may study the counit and the recovery of cumulative data. Figure~\ref{fig:counit_example} already shows that these two behaviors need not agree: panel~(b) is recovered after the pullback--pushout round trip, whereas panel~(a) is not recovered after the pushout--pullback round trip. This asymmetry motivates the introduction of narratives, which encode persistent and cumulative descriptions simultaneously together with the comparison between them.

\subsubsection{Narratives encoding persistence and accumulation simultaneously}

\begin{definition}[Cotwisted Arrow Category]
Let $\cat{D}$ be a small category. The \emph{cotwisted arrow category} of $\cat{D}$, denoted $\cotwD$, is defined as follows. 

\begin{itemize}
  \item \textbf{Objects} are morphisms $f \colon x \to y$ in $\cat{D}$.
  \item \textbf{Morphisms} from $(x \xrightarrow{f} y)$ to $(x' \xrightarrow{f'} y')$ are pairs of morphisms $(u \colon x \to x', v \colon y' \to y)$ in $\cat{D}$ such that the following diagram commutes:
% https://q.uiver.app/#q=WzAsNCxbMCwwLCJ4Il0sWzAsMiwieSJdLFsyLDAsIngnIl0sWzIsMiwieSciXSxbMCwxLCJmIiwyXSxbMiwzXSxbMCwyLCJ1Il0sWzMsMSwidiJdXQ==
\[\begin{tikzcd}
	x && {x'} \\
	\\
	y && {y'}
	\arrow["u", from=1-1, to=1-3]
	\arrow["f"', from=1-1, to=3-1]
	\arrow["f'", from=1-3, to=3-3]
	\arrow["v", from=3-3, to=3-1]
\end{tikzcd}
\quad \text{that is, } f = v \circ f' \circ u.
\]
Thus, a morphism in $\cotwD$ encodes a factorization of \(f\) through \(f'\).  The notation \(\cotwD\) is mnemonic for the defining commutative square: the superscript \(\triangleright\) reminds the reader that the domain morphism points to the right, whereas the subscript \(\triangleleft\) indicates that the codomain morphism points to the left.
  \item \textbf{Composition} is defined componentwise:
  \[
  (u', v') \circ (u, v) = (u' \circ u, v \circ v').
  \]
\end{itemize}
\end{definition}

\begin{lemma}\label{lem:cotwisted-pullbacks}
If \(\D\) has pullbacks and pushouts, then the cotwisted arrow category
\(\cotwD\) has pullbacks.
\end{lemma}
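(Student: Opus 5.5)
We construct the pullback in \(\cotwD\) explicitly, computing the domain part as a pullback in \(\D\) and the codomain part as a pushout in \(\D\). The reason is that the domain component of a morphism in \(\cotwD\) is covariant and the codomain component is contravariant. Equivalently, \(\cotwD\) sits over \(\D\times\D^{op}\) via \((x\xrightarrow{f}y)\mapsto(x,y)\), and limits in \(\D\times\D^{op}\) are pullbacks in the first factor and pushouts in the second.

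Concretely, consider a cospan in \(\cotwD\):
\[
(x_1\xrightarrow{f_1}y_1)\xrightarrow{(u_1,v_1)}(x_0\xrightarrow{f_0}y_0)\xleftarrow{(u_2,v_2)}(x_2\xrightarrow{f_2}y_2).
\]
Here \(u_i\colon x_i\to x_0\) and \(v_i\colon y_0\to y_i\), with \(f_i=v_i f_0 u_i\). We proceed in three steps.

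\emph{Construct the candidate.} Let \(P=x_1\times_{x_0}x_2\), with projections \(p_1,p_2\). Let \(Q=y_1+_{y_0}y_2\) be the pushout of the span \(y_1\xleftarrow{v_1}y_0\xrightarrow{v_2}y_2\), with injections \(q_1,q_2\). Define \(g\colon P\to Q\) by
\[
g:=q_1 f_1 p_1.
\]
This agrees with \(q_2 f_2 p_2\), since
\[
q_1 f_1 p_1=q_1v_1f_0u_1p_1=q_2v_2f_0u_2p_2=q_2f_2p_2,
\]
using \(u_1p_1=u_2p_2\) and \(q_1v_1=q_2v_2\). The legs of the cone are \((p_i,q_i)\colon g\to f_i\). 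This requires \(g=q_i f_i p_i\), which holds by construction. The square commutes in \(\cotwD\) because composition is componentwise: \((u_1p_1,\,q_1v_1)=(u_2p_2,\,q_2v_2)\).

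\emph{Verify the universal property.} Let \((a\xrightarrow{h}b)\) be an object with legs \((s_i,t_i)\colon h\to f_i\). Thus \(s_i\colon a\to x_i\), \(t_i\colon y_i\to b\), and \(h=t_if_is_i\), with \(u_1s_1=u_2s_2\) and \(t_1v_1=t_2v_2\). The pullback property gives a unique \(s\colon a\to P\), and the pushout property gives a unique \(t\colon Q\to b\), such that \(p_is=s_i\) and \(tq_i=t_i\). It remains to check that \((s,t)\) is a morphism \(h\to g\), that is, \(h=t\,g\,s\). This holds because
\[
t\,q_1f_1p_1\,s=t_1f_1s_1=h.
\]
Uniqueness follows from the uniqueness parts of the pullback and pushout properties, since morphisms in \(\cotwD\) are determined by their two components.

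\emph{Main point of care.} The only real subtlety is the direction of the codomain morphisms. One must see that the pullback in \(\cotwD\) requires a \emph{pushout} of the codomains, not a pullback, and that \(g\) is well defined. Well-definedness is exactly where the commutativity of both the pullback square and the pushout square is used. Once the orientation is fixed as above, every remaining verification is a one-line diagram chase.
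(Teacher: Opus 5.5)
Your proof is correct and follows the paper's argument essentially step for step: take the pullback of the domains and the pushout of the codomains, use the composite \(q_1 f_1 p_1 = q_2 f_2 p_2\) as the structure map, and check the universal property componentwise. Defining the structure map directly as this composite, rather than obtaining it from the pushout's universal property as the paper phrases it, is if anything slightly cleaner.
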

\begin{proof}
Consider two morphisms in \(\cotwD\) with common codomain,
\[
(u_i,v_i):
(x_i\xrightarrow{f_i}y_i)
\longrightarrow
(x_0\xrightarrow{f_0}y_0),
\qquad i=1,2,
\]
satisfying \(f_i=v_i\circ f_0\circ u_i\). They are depicted in the following diagram.

% https://q.uiver.app/#q=WzAsNixbMSwxLCJ4XzAiXSxbMCwwLCJ4XzEiXSxbMiwwLCJ4XzIiXSxbMCwzLCJ5XzEiXSxbMiwzLCJ5XzIiXSxbMSwyLCJ5XzAiXSxbMSwwLCJ1XzEiLDJdLFsyLDAsInVfMiJdLFs1LDMsInZfMSIsMl0sWzUsNCwidl8yIl0sWzEsMywiZl8xIiwyXSxbMiw0LCJmXzIiLDJdLFswLDUsImZfMCIsMl0sWzEwLDEyLCI9IiwxLHsic2hvcnRlbiI6eyJzb3VyY2UiOjIwLCJ0YXJnZXQiOjIwfSwic3R5bGUiOnsiYm9keSI6eyJuYW1lIjoibm9uZSJ9LCJoZWFkIjp7Im5hbWUiOiJub25lIn19fV0sWzExLDEyLCI9IiwxLHsic2hvcnRlbiI6eyJzb3VyY2UiOjIwLCJ0YXJnZXQiOjIwfSwic3R5bGUiOnsiYm9keSI6eyJuYW1lIjoibm9uZSJ9LCJoZWFkIjp7Im5hbWUiOiJub25lIn19fV1d
\[\begin{tikzcd}
	{x_1} && {x_2} \\
	& {x_0} \\
	& {y_0} \\
	{y_1} && {y_2}
	\arrow["{u_1}"', from=1-1, to=2-2]
	\arrow[""{name=0, anchor=center, inner sep=0}, "{f_1}"', from=1-1, to=4-1]
	\arrow["{u_2}", from=1-3, to=2-2]
	\arrow[""{name=1, anchor=center, inner sep=0}, "{f_2}"', from=1-3, to=4-3]
	\arrow[""{name=2, anchor=center, inner sep=0}, "{f_0}"', from=2-2, to=3-2]
	\arrow["{v_1}"', from=3-2, to=4-1]
	\arrow["{v_2}", from=3-2, to=4-3]
	\arrow["{=}"{description}, draw=none, from=0, to=2]
	\arrow["{=}"{description}, draw=none, from=1, to=2]
\end{tikzcd}\]

Form in \(\D\) the pullback of the span \(x_1\xrightarrow{\,u_1\,}x_0\xleftarrow{\,u_2\,}x_2\), namely \(x_1\xleftarrow{\,p_1\,}P\xrightarrow{\,p_2\,}x_2\), and the pushout of the cospan \(y_1\xleftarrow{\,v_1\,}y_0\xrightarrow{\,v_2\,}y_2\), namely \(y_1\xrightarrow{\,q_1\,}Q\xleftarrow{\,q_2\,}y_2\). Since \(u_1\circ p_1=u_2\circ p_2\) and \(q_1\circ v_1=q_2\circ v_2\), we obtain
\[
q_1\circ f_1\circ p_1
=
q_1\circ v_1\circ f_0\circ u_1\circ p_1
=
q_2\circ v_2\circ f_0\circ u_2\circ p_2
=
q_2\circ f_2\circ p_2.
\]
Hence, by the universal property of the pushout, there exists a unique morphism \(h:P\to Q\) satisfying \(h=q_1\circ f_1\circ p_1=q_2\circ f_2\circ p_2\). Consequently, \((p_1,q_1):(P\xrightarrow{h}Q)\to(x_1\xrightarrow{f_1}y_1)\) and \((p_2,q_2):(P\xrightarrow{h}Q)\to(x_2\xrightarrow{f_2}y_2)\) are morphisms in \(\cotwD\) whose composites with \((u_1,v_1)\) and \((u_2,v_2)\) coincide.

To verify the universal property, let \((a_i,b_i):(z\xrightarrow{g}w)\to(x_i\xrightarrow{f_i}y_i)\), \(i=1,2\), be morphisms in \(\cotwD\) whose composites with \((u_1,v_1)\) and \((u_2,v_2)\) agree. Then \(u_1\circ a_1=u_2\circ a_2\) and \(b_1\circ v_1=b_2\circ v_2\). By the universal property of the pullback, there exists a unique morphism \(a:z\to P\) such that \(p_i\circ a=a_i\), \(i=1,2\). Similarly, by the universal property of the pushout, there exists a unique morphism \(b:Q\to w\) such that \(b\circ q_i=b_i\), \(i=1,2\). Moreover,
\[
b\circ h\circ a
=
b\circ q_i\circ f_i\circ p_i\circ a
=
b_i\circ f_i\circ a_i
=
g,
\]
so \((a,b):(z\xrightarrow{g}w)\to(P\xrightarrow{h}Q)\) is a morphism in \(\cotwD\). Its uniqueness follows from the uniqueness of \(a\) and \(b\). Therefore, \((P\xrightarrow{h}Q)\), together with the morphisms \((p_1,q_1)\) and \((p_2,q_2)\), is the pullback of \((u_1,v_1)\) and \((u_2,v_2)\) in \(\cotwD\).
\end{proof}

We can now define \(\cat{T}\)-sheaves on \(\cotwD\) as follows.

\begin{proposition}[\(\cat{T}\)-sheaves on \(\cotwD\)]
Let \(\T\) be any time category equipped with the Johnstone coverage. Suppose that \(\D\) has limits and colimits (and therefore \(\cotwD\) has limits).  Then a \(\cotwD\)-valued sheaf is a presheaf \(X : \T^{op} \to \cotwD\) such that: for any interval \([a,b]\) and any cover \(([a, p],[p, b])\) of this interval, \(X([a,b])\) is the pullback \(X([a,p]) \times_{X([p,p])} X([p,b])\).
\end{proposition}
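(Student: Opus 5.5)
The proposition is essentially an unpacking of the definition of a $\cat{D}$-valued sheaf on $\T$, applied with $\cotwD$ as the target. So the proof has two parts. First, show that $\cotwD$ satisfies the hypothesis of that definition, namely that it has pullbacks. Second, specialize the sheaf condition, checking that nothing beyond pullbacks in $\cotwD$ is needed.

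\emph{Step 1: limits in $\cotwD$.} Pullbacks exist by Lemma~\ref{lem:cotwisted-pullbacks}, since $\D$ has pullbacks and pushouts. For the parenthetical claim that $\cotwD$ has all (small) limits, I would generalize the proof of that lemma.
\begin{itemize}
\item Given a diagram $i\mapsto (x_i\xrightarrow{f_i}y_i)$ in $\cotwD$ with transition maps $(u_\alpha,v_\alpha)$, note that the $x$-components form a diagram in $\D$, while the $y$-components form a diagram in $\D$ of the opposite shape.
\item Take $P=\lim_i x_i$ and $Q=\colim_i y_i$.
\item For each $i$ there is a composite $P\to x_i\xrightarrow{f_i}y_i\to Q$. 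The relations $f_i=v_\alpha\circ f_j\circ u_\alpha$ show these composites agree across the indexing category, so they define a single morphism $h:P\to Q$.
\item The universal property follows componentwise, exactly as in the lemma: a cone $(a_i,b_i)$ from $(z\xrightarrow{g}w)$ induces $a:z\to P$ and $b:Q\to w$, and $b\circ h\circ a=b_i\circ f_i\circ a_i=g$.
\end{itemize}

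\emph{Step 2: applying the definition.} By Definition~\ref{prop:def:sheaves}, with $\cotwD$ in place of $\D$, a $\cotwD$-valued sheaf is a presheaf $X:\T^{op}\to\cotwD$ satisfying the isomorphism condition on every Johnstone cover $([a,p],[p,b])$. Here the isomorphism must be the canonical comparison map induced by the restriction morphisms $X([a,b])\to X([a,p])$ and $X([a,b])\to X([p,b])$. These form a cone over the cospan $X([a,p])\to X([p,p])\leftarrow X([p,b])$ because $X$ is a functor and $\T$ is a poset. The pullback is formed in $\cotwD$, which exists by Step~1. This is precisely the statement of the proposition.

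I would also spell out the concrete meaning, since it motivates the construction. Write $X([a,b])=(x_a^b\to y_a^b)$. By the construction in Lemma~\ref{lem:cotwisted-pullbacks}, the condition says:
\begin{itemize}
\item $x_a^b\cong x_a^p\times_{x_p^p}x_p^b$, so the domains form a persistent $\D$-narrative;
\item $y_a^b\cong y_a^p+_{y_p^p}y_p^b$, so the codomains form a cumulative $\D$-narrative;
\item the comparison map $x_a^b\to y_a^b$ is the induced one.
\end{itemize}
This follows because isomorphisms in $\cotwD$ are exactly pairs of isomorphisms in $\D$: the inverse of $(u,v)$ is $(u^{-1},v^{-1})$.

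\emph{Main obstacle.} The only nonroutine point is the contravariance in the second component. Morphisms in $\cotwD$ reverse the codomain arrow, so the functor $X:\T^{op}\to\cotwD$ makes $[a,b]\mapsto y_a^b$ covariant on $\T$. The limit in $\cotwD$ is therefore a colimit on the $y$-side. I would check carefully that the diagram shapes match, meaning that the cospan in $\cotwD$ becomes a span in $\D$ on codomains. I would also check that the morphism $h$ is well defined for arbitrary indexing diagrams, which requires that $\D$ be complete and cocomplete, as assumed.
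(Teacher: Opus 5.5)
Your core argument is exactly the paper's proof. Lemma~\ref{lem:cotwisted-pullbacks} gives pullbacks in \(\cotwD\), and since the Johnstone coverage is generated by binary covers, the sheaf definition specializes verbatim. Your unpacking is also correct: domains form a persistent narrative, codomains form a cumulative one, and isomorphisms in \(\cotwD\) are pairs of isomorphisms.

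The step that fails is your extension in Step~1 to all small limits. The identities \(f_i=v_\alpha\circ f_j\circ u_\alpha\), \(u_\alpha\circ p_i=p_j\) and \(q_i\circ v_\alpha=q_j\) give \(q_i f_i p_i=q_j f_j p_j\) only when there is a morphism \(\alpha\colon i\to j\). By zigzags, this means only within a connected component of the indexing category. Completeness and cocompleteness of \(\D\) do not help, so your ``main obstacle'' remark puts the difficulty in the wrong place.

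For empty or disconnected diagrams the limit genuinely need not exist. For the empty diagram your recipe asks for a morphism \(1\to\emptyset\) in \(\Set\), and indeed \(\cotw{\Set}\) has no terminal object. A terminal \((t\xrightarrow{k}s)\) must receive a map from \((\emptyset\to\emptyset)\), which requires a morphism \(s\to\emptyset\) and hence forces \(s=t=\emptyset\). But then \((1\xrightarrow{\id}1)\) admits no map to it. Similarly, for a binary product, the two candidates \(q_1f_1p_1\) and \(q_2f_2p_2\) land in different summands of \(y_1+y_2\) whenever \(x_1\times x_2\neq\emptyset\).

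What your argument actually proves is that \(\cotwD\) has all connected limits; that is the most the parenthetical in the statement can mean. The paper's proof uses only pullbacks, which are connected, so the proposition itself is unaffected. Drop the all-limits claim, or restrict it to connected diagrams.
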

\begin{proof}
Since the Johnstone coverage is generated by binary covers, the sheaf condition requires precisely the existence of the corresponding pullbacks in \(\cotwD\). By Lemma~\ref{lem:cotwisted-pullbacks}, \(\cotwD\) has these pullbacks whenever \(\D\) has pullbacks and pushouts. The result therefore follows directly from the definition of a sheaf.
\end{proof}

\begin{definition}\label{def:narratives-category}
  We denote by \(\cat{Nar}(\T, \cotwD)\) the category of \(\cotwD\)-valued sheaves on \(\T\) and we call it the category of \define{\(\cotwD\)-narratives} with \(\cat{T}\)-time.  
\end{definition}

A narrative \(X\in\Nar\) assigns to every interval \([a,b]\) a morphism \(X_a^b=(P_a^b\xrightarrow{\gamma_a^b}C_a^b)\) in \(\D\). The objects \(P_a^b\) form its \emph{persistent component}, the objects \(C_a^b\) form its \emph{cumulative component}, and the morphisms \(\gamma_a^b\) compare these two descriptions. Figure~\ref{fig:narrative-shape} illustrates this structure on the category \(\cat{I}_{\mathbb{N}}/[0,2]\), whose objects are the intervals contained in \([0,2]\subseteq\mathbb{N}\).

\begin{remark}\label{rem:compar-morph-need-not-be-iso}
The comparison morphisms \(\gamma_a^b\) need not be isomorphisms, even when \(a=b\). Thus the persistent and cumulative descriptions of the same interval generally contain different kinds of information. For example, in \(\Set\) or \(\Grph\), one may interpret \(\gamma_t^t:P_t^t\to C_t^t\) as an attribute map assigning to each element or vertex of \(P_t^t\) a color, label, weight, community, role, or other feature in \(C_t^t\). In this case, \(P_t^t\) represents the collection of objects and \(C_t^t\) represents the collection of possible attributes, with \(\gamma_t^t\) recording the assignment of attributes to objects. The sheaf structure then encodes how both the objects and their attributes relate across overlapping intervals of time: restrictions on the persistent side track how objects change through time, while restrictions on the cumulative side track the compatibility and evolution of their attributes. The sheaf condition ensures that local descriptions on overlapping intervals can be uniquely assembled into a coherent global account of both the objects and their attributes.
\end{remark}

\begin{figure}[hbt] \centering
% https://q.uiver.app/#q=WzAsMTIsWzIsMCwiUF8wXjIiXSxbMSwxLCJQXzBeMSJdLFszLDEsIlBfMV4yIl0sWzAsMiwiUF8wXjAiXSxbMiwyLCJQXzFeMSJdLFs0LDIsIlBfMl4yIl0sWzAsMywiQ18wXjAiXSxbMiwzLCJDXzFeMSJdLFs0LDMsIkNfMl4yIl0sWzEsNCwiQ18wXjEiXSxbMyw0LCJDXzFeMiJdLFsyLDUsIkNfMF4yIl0sWzAsMV0sWzAsMl0sWzAsNCwiIiwwLHsic3R5bGUiOnsibmFtZSI6ImNvcm5lciIsImJvZHkiOnsibmFtZSI6Im5vbmUifSwiaGVhZCI6eyJuYW1lIjoibm9uZSJ9fX1dLFsxLDNdLFsxLDRdLFsyLDRdLFsyLDVdLFs2LDldLFs3LDldLFs3LDEwXSxbOCwxMF0sWzksMTFdLFsxMCwxMV0sWzExLDcsIiIsMCx7InN0eWxlIjp7Im5hbWUiOiJjb3JuZXIiLCJib2R5Ijp7Im5hbWUiOiJub25lIn0sImhlYWQiOnsibmFtZSI6Im5vbmUifX19XSxbMyw2LCJ7XFxnYW1tYV8wXjB9IiwyLHsiY3VydmUiOi0xLCJzdHlsZSI6eyJib2R5Ijp7Im5hbWUiOiJkYXNoZWQifX19XSxbNCw3LCJ7XFxnYW1tYV8xXjF9IiwyLHsiY3VydmUiOi0xLCJzdHlsZSI6eyJib2R5Ijp7Im5hbWUiOiJkYXNoZWQifX19XSxbNSw4LCJ7XFxnYW1tYV8yXjJ9IiwwLHsiY3VydmUiOi0xLCJzdHlsZSI6eyJib2R5Ijp7Im5hbWUiOiJkYXNoZWQifX19XSxbMSw5LCJ7XFxnYW1tYV8wXjF9IiwyLHsiY3VydmUiOi0yLCJzdHlsZSI6eyJib2R5Ijp7Im5hbWUiOiJkYXNoZWQifX19XSxbMiwxMCwie1xcZ2FtbWFfMV4yfSIsMCx7ImN1cnZlIjotMiwic3R5bGUiOnsiYm9keSI6eyJuYW1lIjoiZGFzaGVkIn19fV0sWzAsMTEsIntcXGdhbW1hXzBeMn0iLDAseyJjdXJ2ZSI6LTQsInN0eWxlIjp7ImJvZHkiOnsibmFtZSI6ImRhc2hlZCJ9fX1dLFs5LDEsIiIsMSx7InN0eWxlIjp7Im5hbWUiOiJjb3JuZXIifX1dLFsxMCwyLCIiLDEseyJzdHlsZSI6eyJuYW1lIjoiY29ybmVyIn19XV0=
\[\begin{tikzcd}
	&& {P_0^2} && \\
	& {P_0^1} && {P_1^2} \\
	{P_0^0} && {P_1^1} && {P_2^2} \\
	{C_0^0} && {C_1^1} && {C_2^2} \\
	& {C_0^1} && {C_1^2} \\
	&& {C_0^2}
	\arrow[from=1-3, to=2-2]
	\arrow[from=1-3, to=2-4]
	\arrow["\lrcorner"{anchor=center, pos=0.125, rotate=-45}, draw=none, from=1-3, to=3-3]
	\arrow["{{\gamma_0^2}}", curve={height=-24pt}, dashed, from=1-3, to=6-3]
	\arrow[from=2-2, to=3-1]
	\arrow[from=2-2, to=3-3]
	\arrow["{{\gamma_0^1}}"', curve={height=-12pt}, dashed, from=2-2, to=5-2]
	\arrow[from=2-4, to=3-3]
	\arrow[from=2-4, to=3-5]
	\arrow["{{\gamma_1^2}}", curve={height=-12pt}, dashed, from=2-4, to=5-4]
	\arrow["{{\gamma_0^0}}"', curve={height=-6pt}, dashed, from=3-1, to=4-1]
	\arrow["{{\gamma_1^1}}"', curve={height=-6pt}, dashed, from=3-3, to=4-3]
	\arrow["{{\gamma_2^2}}", curve={height=-6pt}, dashed, from=3-5, to=4-5]
	\arrow[from=4-1, to=5-2]
	\arrow[from=4-3, to=5-2]
	\arrow[from=4-3, to=5-4]
	\arrow[from=4-5, to=5-4]
	\arrow["\lrcorner"{anchor=center, pos=0.125, rotate=135}, draw=none, from=5-2, to=2-2]
	\arrow[from=5-2, to=6-3]
	\arrow["\lrcorner"{anchor=center, pos=0.125, rotate=135}, draw=none, from=5-4, to=2-4]
	\arrow[from=5-4, to=6-3]
	\arrow["\lrcorner"{anchor=center, pos=0.125, rotate=135}, draw=none, from=6-3, to=4-3]
\end{tikzcd}\]
\caption{A \(\D\)-narrative on the category \(\cat{I}_{\mathbb{N}}/[0,2]\), whose objects are the intervals contained in \([0,2]\subseteq\mathbb{N}\).  The upper diagram is its persistent component, the lower diagram is its cumulative component, and the dashed morphisms \(\gamma_a^b:P_a^b\to C_a^b\) compare the two. In the case shown, the persistent data \(P_0^0 \leftarrow P_0^1 \rightarrow P_1^1 \leftarrow P_1^2 \rightarrow P_2^2,\) is prescribed data, while \(P_0^2\) is determined by a pullback. On the cumulative side, \(C_0^1,C_1^2\), and \(C_0^2\) are determined by pushouts.} \label{fig:narrative-shape}
\end{figure}

We have constructed the category \(\Nar\), whose objects simultaneously encode both the persistent and cumulative descriptions of a temporal object together with the comparison between them. As illustrated in Figure~\ref{fig:narrative-shape}, the persistent and cumulative components may each arise either from prescribed data (see Remark~\ref{rem:compar-morph-need-not-be-iso}) or from their respective universal constructions. These independent possibilities will later give rise to the classification of narratives by rigidity (Section~\ref{subsubsec:rigidity}).

\subsubsection{Factorizing the persistence--accumulation adjunction}

Having constructed the category of narratives, we now show that it naturally sits between persistent and cumulative narratives by factorizing the persistence–accumulation adjunction.

\begin{theorem}\label{thm:adjunction-factorization}
    The adjunction \(\K \dashv \fP\) factorizes through the category \(\cat{Nar}(\T, \cotwD)\) via the functors \(\cotwP\) and \(\cotwK \). That is, both the outer and inner triangles in the following diagram commute:

% https://q.uiver.app/#q=WzAsMyxbNCwwLCJcXEN1Il0sWzIsNCwiXFxjYXR7TmFyfShcXFQsIFxcY290d0QpIl0sWzAsMCwiXFxQZSJdLFswLDEsIlxcY290d3tcXGZQfSIsMix7Im9mZnNldCI6LTIsImN1cnZlIjoyLCJzaG9ydGVuIjp7InRhcmdldCI6MTB9fV0sWzIsMCwiXFxLIiwwLHsiY3VydmUiOi0yfV0sWzEsMiwiXFxjb3R3ZG9tIiwyLHsib2Zmc2V0IjotMiwiY3VydmUiOjIsInNob3J0ZW4iOnsic291cmNlIjoxMH19XSxbMiwxLCJcXGNvdHd7XFxLfSIsMix7Im9mZnNldCI6MywiY3VydmUiOjIsInNob3J0ZW4iOnsidGFyZ2V0IjoxMH19XSxbMSwwLCJcXGNvdHdjb2QiLDIseyJvZmZzZXQiOjMsImN1cnZlIjoyLCJzaG9ydGVuIjp7InNvdXJjZSI6MTB9fV0sWzAsMiwiXFxmUCIsMCx7Im9mZnNldCI6LTEsImN1cnZlIjotMn1dLFs0LDgsIiIsMCx7ImxldmVsIjoxLCJzdHlsZSI6eyJuYW1lIjoiYWRqdW5jdGlvbiJ9fV1d
\[\begin{tikzcd}
	\Pe &&&& \Cu \\
	\\
	\\
	\\
	&& {\cat{Nar}(\T, \cotwD)}
	\arrow[""{name=0, anchor=center, inner sep=0}, "\K", curve={height=-12pt}, from=1-1, to=1-5]
	\arrow["{\cotw{\K}}"', shift right=3, curve={height=12pt}, between={0}{0.9}, from=1-1, to=5-3]
	\arrow[""{name=1, anchor=center, inner sep=0}, "\fP", shift left, curve={height=-12pt}, from=1-5, to=1-1]
	\arrow["{\cotw{\fP}}"', shift left=2, curve={height=12pt}, between={0}{0.9}, from=1-5, to=5-3]
	\arrow["\cotwdom"', shift left=2, curve={height=12pt}, between={0.1}{1}, from=5-3, to=1-1]
	\arrow["\cotwcod"', shift right=3, curve={height=12pt}, between={0.1}{1}, from=5-3, to=1-5]
	\arrow["\dashv"{anchor=center, rotate=-90}, draw=none, from=0, to=1]
\end{tikzcd}\]

Equivalently, 
\[
\cotwcod\circ\cotwK=\K,
\qquad
\cotwdom\circ\cotwP=\fP.
\]
Moreover, 
\[
\cotwdom\circ\cotwK=\id_{\Pe},
\qquad
\cotwcod\circ\cotwP=\id_{\Cu}.
\]
\end{theorem}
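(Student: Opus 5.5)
The plan is to define the four functors explicitly and then check the four identities, which are essentially built into the definitions. For the projections, I will define \(\cotwdom:\Nar\to\Pe\) and \(\cotwcod:\Nar\to\Cu\) by postcomposition with the domain and codomain functors \(\dom:\cotwD\to\D\) and \(\cod:\cotwD\to\D^{op}\). A morphism \((u,v)\) in \(\cotwD\) has \(u\) covariant and \(v\) contravariant, so for \(X:\T^{op}\to\cotwD\) the composite \(\dom\circ X\) is a presheaf and \(\cod\circ X\) is a copresheaf. The first step is to check that these land in sheaves and cosheaves. This follows from the proof of Lemma~\ref{lem:cotwisted-pullbacks}: a pullback in \(\cotwD\) is computed as the pullback of domains and the pushout of codomains. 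Hence \(\dom\) sends the sheaf-condition pullbacks to pullbacks in \(\D\), and \(\cod\) sends them to pushouts in \(\D\), which is exactly the cosheaf condition. The action on morphisms of narratives is componentwise.

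Next I define \(\cotwK:\Pe\to\Nar\). For a persistent narrative \(F\), the components of \(\cotwK F\) at \([a,b]\) are
\[
F_a^b\xrightarrow{\;\iota_a^b\;}(\K F)_a^b,
\]
where \(\iota_a^b\) is the colimit coprojection at the terminal object \([a,b]\) of \((\T/[a,b])^{op}\) in \eqref{eq:K-definition}. Dually, \(\cotwP\hat F\) has components
\[
(\fP\hat F)_a^b\xrightarrow{\;\pi_a^b\;}\hat F_a^b,
\]
where \(\pi_a^b\) is the limit projection. To get a functor \(\T^{op}\to\cotwD\), an inclusion \(i:[a,b]\hookrightarrow[c,d]\) must be sent to the pair \((F(i),\K F(i))\). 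The required square
\[
\iota_c^d=\K F(i)\circ\iota_a^b\circ F(i)
\]
follows from the definition of \(\K F(i)\) as the induced map of colimits together with naturality of coprojections: \(\K F(i)\circ\iota_a^b\) is the coprojection of \((\K F)_c^d\) at \([a,b]\), which equals \(\iota_c^d\) precomposed with the diagram map \(F(i)\). The dual computation handles \(\cotwP\). Functoriality in \(F\) comes from naturality of coprojections and projections.

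The main obstacle is verifying that \(\cotwK F\) satisfies the sheaf condition in \(\cotwD\). By Lemma~\ref{lem:cotwisted-pullbacks} this splits into two parts: \(F([a,b])\) must be the pullback of \(F([a,p])\) and \(F([p,b])\) over \(F([p,p])\), which holds because \(F\) is a sheaf; and \((\K F)_a^b\) must be the pushout of \((\K F)_a^p\) and \((\K F)_p^b\) under \((\K F)_p^p\), which holds because \(\K F\) is a cosheaf by Theorem~\ref{thm:adjunction}. I also have to check that the comparison map of the pullback object agrees with \(\iota_a^b\), that is, that the induced \(h\) equals \(\iota_a^b\). This holds by uniqueness in the pushout property, since \(h\) is determined by \(q_1 f_1 p_1\), and \(\iota_a^b\) satisfies the same equation by naturality of coprojections. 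The same argument, with pullback and pushout interchanged, applies to \(\cotwP\hat F\).

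With this in place, the four identities are immediate on objects and morphisms. The domain of \(\cotwK F\) is \(F\) and its codomain is \(\K F\); the domain of \(\cotwP\hat F\) is \(\fP\hat F\) and its codomain is \(\hat F\); and the same holds for the morphism components. This gives
\[
\cotwdom\circ\cotwK=\id_{\Pe},\qquad
\cotwcod\circ\cotwK=\K,\qquad
\cotwdom\circ\cotwP=\fP,\qquad
\cotwcod\circ\cotwP=\id_{\Cu}.
\]
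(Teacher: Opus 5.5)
Your object-level argument is sound and follows the paper's route: the same four constructions, the sheaf condition for $\cotwK F$ obtained from the description of pullbacks in $\cotwD$, and the identities read off by inspection. For the sheaf condition you use pullback of domains and pushout of codomains, with the comparison map forced to equal $\iota_a^b$ by the cotwisted square for $[a,p]\hookrightarrow[a,b]$. The paper instead checks the universal property directly, which is the same computation.

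The step that fails is ``functoriality in $F$ comes from naturality of coprojections and projections.'' A morphism $\cotwK F\to\cotwK G$ in $\Nar$ has components $(u_a^b,v_a^b)$ with $u_a^b\colon F_a^b\to G_a^b$ but $v_a^b\colon(\K G)_a^b\to(\K F)_a^b$, because the codomain part of a morphism in $\cotwD$ points backwards. Naturality of coprojections, $(\K\phi)_a^b\circ\iota^F=\iota^G\circ\phi_a^b$, gives a square in the ordinary arrow category, with $\K\phi$ going the wrong way. For the same reason, postcomposition with $\cod\colon\cotwD\to\D^{op}$ gives $\cotwcod\colon\Nar\to\Cu^{op}$. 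So $\cotwcod\circ\cotwK$ reverses morphisms while $\K$ does not.

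No cleverer definition helps. Take the one-interval time category $\cat T_0$ and $\D=\Set$. Then $\K=\id$ and $\cotwK F=(F\xrightarrow{\id}F)$. A morphism from $\cotwK\{0,1\}$ to $\cotwK\{\ast\}$ would be a pair $(u,v)$ with $v\colon\{\ast\}\to\{0,1\}$ and $v\circ u=\id_{\{0,1\}}$, which does not exist. Yet $\mathsf{Pe}(\cat T_0,\Set)$ has a morphism $\{0,1\}\to\{\ast\}$. So in general there is no functor $\Pe\to\Nar$ with these values on objects, let alone one with $\cotwdom\circ\cotwK=\id_{\Pe}$ on morphisms.

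The paper's own proof has the same lacuna:
\begin{itemize}
\item Propositions~\ref{prop:cum-completion} and~\ref{prop:per-completion} define $\cotwK$ and $\cotwP$ only on objects. The functoriality verified there is that of $\cotwK F\colon\T^{op}\to\cotwD$, not of $\cotwK$ in $F$.
\item Proposition~\ref{prop:dom-cod-functors} does not account for the fact that $\cod$ reverses natural transformations.
\item The final verification compares the functors only on objects and on restriction maps.
\end{itemize}
So what both arguments actually prove is the objectwise statement $\cotwdom(\cotwK F)=F$, $\cotwcod(\cotwK F)=\K F$, $\cotwdom(\cotwP\hat F)=\fP\hat F$, $\cotwcod(\cotwP\hat F)=\hat F$. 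A functorial version requires changing the statement. One option is restricting to isomorphisms, where $\cotwK\phi:=(\phi,(\K\phi)^{-1})$ does satisfy the cotwisted condition and naturality.

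There is also a minor slip: $[a,b]$ is terminal in $\T/[a,b]$, hence initial, not terminal, in $(\T/[a,b])^{op}$. Were it terminal there, the colimit would just be $F_a^b$.
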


The forgetful functors recover the persistent and cumulative components of a narrative by forgetting one side of the comparison morphism:
\[
\cotwdom :\Nar\longrightarrow\Pe,
\qquad
\cotwcod:\Nar\longrightarrow\Cu.
\]
Conversely, the functors
\[
\cotw{\K}:\Pe\longrightarrow\Nar,
\qquad
\cotw{\fP}:\Cu\longrightarrow\Nar,
\]
canonically complete persistent and cumulative narratives into \(\cotwD\)-valued narratives by adjoining their cumulative and persistent counterparts, respectively.

We now construct each of these four functors. Once these constructions are in place, the proof of Theorem~\ref{thm:adjunction-factorization} will follow by verifying that the four triangles commute by construction.

\paragraph{The forgetful functors.}\label{paragraph:forgetful-fun}

We first construct the functors
\[
\cotwdom :\Nar\longrightarrow\Pe,
\qquad
\cotwcod:\Nar\longrightarrow\Cu.
\]
These arise from the canonical domain and codomain projections of the cotwisted arrow category,
\[
\D
\xleftarrow{\ \dom }
\cotwD
\xrightarrow{\ \cod }
\D^{op},
\]
where \(\dom\) sends an arrow \(x\xrightarrow{f}y\) to its domain \(x\), while \(\cod\) sends it to its codomain \(y\). On a morphism \((u,v):(x\xrightarrow{f}y)\to(x'\xrightarrow{f'}y')\) in \(\cotwD\), they are given by

% https://q.uiver.app/#q=WzAsOCxbMywwLCJ4Il0sWzMsMiwieSJdLFs1LDAsIngnIl0sWzUsMiwieSciXSxbNywxLCJ5Il0sWzgsMSwieSciXSxbMCwxLCJ4Il0sWzEsMSwieCciXSxbMCwxLCJmIiwyXSxbMiwzLCJmJyJdLFswLDIsInUiXSxbMywxLCJ2IiwyXSxbNCw1LCJ2Il0sWzYsNywidSJdLFs5LDQsIlxcY29kIiwwLHsic2hvcnRlbiI6eyJzb3VyY2UiOjMwLCJ0YXJnZXQiOjIwfSwibGV2ZWwiOjEsInN0eWxlIjp7InRhaWwiOnsibmFtZSI6Im1hcHMgdG8ifX19XSxbOCw3LCJcXGRvbSIsMix7InNob3J0ZW4iOnsic291cmNlIjozMCwidGFyZ2V0IjoyMH0sImxldmVsIjoxLCJzdHlsZSI6eyJ0YWlsIjp7Im5hbWUiOiJtYXBzIHRvIn19fV1d
\[\begin{tikzcd}
	&&& x && {x'} &&& \\
	x & {x'} &&&&&& y & {y'} \\
	&&& y && {y'}
	\arrow["u", from=1-4, to=1-6]
	\arrow[""{name=0, anchor=center, inner sep=0}, "f"', from=1-4, to=3-4]
	\arrow[""{name=1, anchor=center, inner sep=0}, "{f'}", from=1-6, to=3-6]
	\arrow["u", from=2-1, to=2-2]
	\arrow["v", from=2-8, to=2-9]
	\arrow["v"', from=3-6, to=3-4]
	\arrow["\dom"', between={0.3}{0.8}, maps to, from=0, to=2-2]
	\arrow["\cod", between={0.3}{0.8}, maps to, from=1, to=2-8]
\end{tikzcd}\]

Thus \(\dom\) is covariant, whereas \(\cod\) is naturally viewed as taking values in \(\D^{op}\).

\begin{proposition}\label{prop:dom-cod-functors}
Composition with \(\dom\) and \(\cod\) defines functors
\[
\cotwdom :\Nar\longrightarrow\Pe,
\qquad
\cotwcod:\Nar\longrightarrow\Cu
\]
given on objects by 
\[\cotwdom(X)=\dom\circ X,
\qquad
\cotwcod(X)=\cod\circ X.\]
\end{proposition}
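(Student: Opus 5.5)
The plan has three parts: check that $\dom\circ X$ is a sheaf, check that $\cod\circ X$ is a cosheaf, and then handle functoriality on morphisms.

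\textbf{Types.} Let $X:\T^{op}\to\cotwD$ be a $\cotwD$-narrative. Then $\dom\circ X:\T^{op}\to\D$ is a $\D$-valued presheaf. Since $\cod:\cotwD\to\D^{op}$, the composite $\cod\circ X$ is a functor $\T^{op}\to\D^{op}$, which is the same as a copresheaf $\T\to\D$. Concretely, a restriction $X(i)=(u,v)$ along an inclusion $i$ contributes $u$ covariantly to the persistent side and $v$ in the opposite direction to the cumulative side. This matches the shape drawn in Figure~\ref{fig:narrative-shape}.

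\textbf{Sheaf and cosheaf conditions.} Fix an interval $[a,b]$ and a cover $([a,p],[p,b])$. By the sheaf condition in $\cotwD$, $X([a,b])$ is the pullback $X([a,p])\times_{X([p,p])}X([p,b])$. The construction in the proof of Lemma~\ref{lem:cotwisted-pullbacks} identifies this pullback explicitly.
\begin{itemize}
\item Its domain is the pullback in $\D$ of the domains.
\item Its codomain is the pushout in $\D$ of the codomains.
\item Its projections are $(p_i,q_i)$.
\end{itemize}
Since limits are unique up to isomorphism, the given limiting cone is isomorphic to this canonical one. Applying $\dom$ then gives
\[P_a^b\cong P_a^p\times_{P_p^p}P_p^b\]
compatibly with the restriction maps, so $\dom\circ X$ is a sheaf. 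Applying $\cod$ gives
\[C_a^b\cong C_a^p+_{C_p^p}C_p^b,\]
so $\cod\circ X$ is a cosheaf. Put differently, the lemma shows that $\dom$ preserves pullbacks and $\cod$ sends pullbacks in $\cotwD$ to pushouts in $\D$, i.e.\ to pullbacks in $\D^{op}$. That is the only input needed here.

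\textbf{Morphisms.} A morphism of narratives is a natural transformation $\alpha:X\Rightarrow Y$. Whiskering gives $\dom\alpha:\dom\circ X\Rightarrow\dom\circ Y$ and $\cod\alpha$, a natural transformation between functors into $\D^{op}$. One must fix the variance convention for maps in $\Cu$. Taken literally, $\cod\alpha$ has components $C^Y\to C^X$, so $\cotwcod$ is contravariant, i.e.\ a functor $\Nar^{op}\to\Cu$. I would state this explicitly, or equivalently regard $\cod\circ X$ as landing in $\Cu$ with morphisms reversed. Functoriality (preservation of identities and composites) is immediate from the functoriality of whiskering.

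\textbf{Main obstacle.} The only real subtlety is the step above: knowing that \emph{every} pullback in $\cotwD$ (not just a chosen one) has domain a pullback and codomain a pushout. Uniqueness of limits up to unique isomorphism settles this, because $\dom$ and $\cod$ carry isomorphisms to isomorphisms.
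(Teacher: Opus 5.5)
Your proposal is correct and follows the same route as the paper: apply \(\dom\) and \(\cod\) componentwise, read off the pullback and pushout conditions from the sheaf condition in \(\cotwD\), and whisker natural transformations. It is more careful than the paper at the middle step, since you justify it via the explicit construction in Lemma~\ref{lem:cotwisted-pullbacks} together with uniqueness of limits, which the paper leaves implicit, and your variance remark is also right and exposes a genuine imprecision in the statement: because morphisms of \(\cotwD\) reverse the codomain component, a morphism \(\alpha\colon X\Rightarrow Y\) of narratives yields \(\cotwcod(Y)\to\cotwcod(X)\), so \(\cotwcod\) is really a functor \(\Nar^{op}\to\Cu\), a point the paper's proof glosses over.
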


\begin{proof}
Let \(X:\T^{op}\to\cotwD\) be a narrative. For each interval \([a,b]\), write
\[
X_a^b=\left(P_a^b\xrightarrow{\gamma_a^b}C_a^b\right).
\]
We define \(\cotwdom(X)\) on objects by \([a,b]\mapsto P_a^b\) and \(\cotwcod(X)\) by \([a,b]\mapsto C_a^b\). Given a morphism \(i:[a,b]\hookrightarrow[c,d]\), if \(X(i)=(p_i,c_i)\), we put \(\cotwdom(X)(i)=p_i\) and \(\cotwcod(X)(i)=c_i\). Since composition in \(\cotwD\) is defined componentwise, both assignments preserve identities and composition. Furthermore, because \(X\) is a sheaf valued in \(\cotwD\), applying \(\dom\) to each pullback diagram recovers the sheaf condition defining persistent narratives, while applying \(\cod\) recovers the corresponding pushout condition defining cumulative narratives. Thus, \(\cotwdom(X)\in\Pe\) and \(\cotwcod(X)\in\Cu\). Finally, a morphism of narratives is a natural transformation in \(\cotwD\), and composing it componentwise with \(\dom\) or \(\cod\) yields natural transformations between the corresponding persistent or cumulative narratives. Therefore, \(\cotwdom\) and \(\cotwcod\) define functors.
\end{proof}

\paragraph{The cumulative completion.}

Having constructed the forgetful functors, we now turn to the opposite direction. Our next goal is to show that every persistent narrative admits a canonical cumulative completion into a \(\cotwD\)-valued narrative. 

\begin{proposition}[Cumulative completion of a persistent narrative]\label{prop:cum-completion}
There is a functor
\[
\cotwK \colon \Pe \to \Nar
\]
that sends a persistent narrative \(F:\T^{op}\to\D\) to the narrative
\[
\cotwK F\colon \T^{op}\to \cotwD.
\]
defined as follows. Given an interval \([a,b]\in\T\), \(\cotwK F\) maps it to the object
\[
F_a^b \xrightarrow{\alpha_a^b} \K F_a^b
\]
of \(\cotwD\), where \(\alpha_a^b\) is the canonical cocone map into the colimit defining \(\K F_a^b\). Given an inclusion \(i\colon [a,b]\hookrightarrow [c,d]\), \(\cotwK F\) maps it to the morphism in \(\cotwD\)
\[
\bigl(
F_c^d \xrightarrow{F(i)} F_a^b,\;
\K F_a^b \xrightarrow{\K F(i)} \K F_c^d
\bigr),
\]
which satisfies the cotwisted commutativity condition:
% https://q.uiver.app/#q=WzAsNCxbMSwwLCJYX2NeZCAiXSxbMSwzLCJcXEsgWF9jXmQiXSxbMCwxLCJYX2FeYiJdLFswLDIsIlxcSyBYX2FeYiJdLFswLDEsIlxcYWxwaGFfY15kICJdLFsyLDMsIlxcYWxwaGFfYV5iIiwyXSxbMCwyLCJYKGkpIiwyXSxbMywxLCJcXEsgWChpKSIsMl0sWzUsNCwiPSIsMSx7InNob3J0ZW4iOnsic291cmNlIjoyMCwidGFyZ2V0IjoyMH0sInN0eWxlIjp7ImJvZHkiOnsibmFtZSI6Im5vbmUifSwiaGVhZCI6eyJuYW1lIjoibm9uZSJ9fX1dXQ==
\[\begin{tikzcd}
	& {F_c^d } \\
	{F_a^b} \\
	{\K F_a^b} \\
	& {\K F_c^d}
	\arrow["{F(i)}"', from=1-2, to=2-1]
	\arrow[""{name=0, anchor=center, inner sep=0}, "{\alpha_c^d }", from=1-2, to=4-2]
	\arrow[""{name=1, anchor=center, inner sep=0}, "{\alpha_a^b}"', from=2-1, to=3-1]
	\arrow["{\K F(i)}"', from=3-1, to=4-2]
	\arrow["{=}"{description}, draw=none, from=1, to=0]
\end{tikzcd}\].
Equivalently,
\[
\alpha_c^d=\K F(i)\circ \alpha_a^b\circ F(i).
\]
\end{proposition}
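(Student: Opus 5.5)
We need to check four things, in this order: (i) for each interval the object \(F_a^b\xrightarrow{\alpha_a^b}\K F_a^b\) is well defined and each inclusion \(i\) gives a morphism of \(\cotwD\), i.e.\ the cotwisted square commutes; (ii) \(\cotwK F\) is a presheaf \(\T^{op}\to\cotwD\); (iii) it satisfies the sheaf condition of the proposition on \(\cotwD\)-valued sheaves; (iv) the construction is functorial in \(F\).

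For (i), \(\alpha_a^b\) is the colimit-cocone leg of \eqref{eq:K-definition} at the terminal object \([a,b]\) of \(\T/[a,b]\), viewed as an object of \((\T/[a,b])^{op}\). The map \(\K F(i)\) is the unique map of colimits induced by the inclusion of diagrams \((\T/[a,b])^{op}\hookrightarrow(\T/[c,d])^{op}\). It therefore satisfies \(\K F(i)\circ\kappa^{ab}_J=\kappa^{cd}_J\) for every cocone leg \(\kappa\) indexed by \(J\subseteq[a,b]\). Taking \(J=[a,b]\) gives \(\K F(i)\circ\alpha_a^b=\kappa^{cd}_{[a,b]}\). The cocone for \([c,d]\) is compatible along the morphism \([a,b]\hookrightarrow[c,d]\), so \(\kappa^{cd}_{[a,b]}\circ F(i)=\alpha_c^d\). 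This yields \(\alpha_c^d=\K F(i)\circ\alpha_a^b\circ F(i)\).

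For (ii), the components \(F(i)\) and \(\K F(i)\) are functorial because \(F\) and \(\K F\) are functors. Composition in \(\cotwD\) is componentwise with the reversed order in the second slot. This matches the contravariance of \(F\) and the covariance of \(\K F\) exactly, so \(\cotwK F\) respects identities and composites.

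Step (iii) is the main obstacle. By the proof of Lemma~\ref{lem:cotwisted-pullbacks}, pullbacks in \(\cotwD\) are computed as a pullback of domains, a pushout of codomains, and the induced map \(h\) between them. So for a cover \(([a,p],[p,b])\) we need three facts. First, \(F_a^b\cong F_a^p\times_{F_p^p}F_p^b\); this holds because \(F\) is a sheaf. Second, \(\K F_a^b\cong\K F_a^p+_{\K F_p^p}\K F_p^b\); this holds because \(\K F\) is a cosheaf by Theorem~\ref{thm:adjunction}. Third, under these identifications \(\alpha_a^b\) coincides with the induced map \(h=q_1\circ\alpha_a^p\circ p_1\). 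Applying the identity from (i) to \(i:[a,p]\hookrightarrow[a,b]\) gives \(\alpha_a^b=\K F(i)\circ\alpha_a^p\circ F(i)\), and here \(F(i)=p_1\) and \(\K F(i)=q_1\), so the third fact follows. The subtle point is that the sheaf isomorphisms must be the canonical comparison maps induced by restrictions and coprojections. I would check that the legs of the \(\cotwD\)-cone, namely \((F(i_1),\K F(i_1))\) and \((F(i_2),\K F(i_2))\), are exactly the pullback legs from the lemma's construction. Uniqueness then identifies \(\cotwK F_a^b\) with the pullback.

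For (iv), let \(\phi:F\to G\) be a morphism of persistent narratives. Send it to the family \((\phi_{[a,b]},\K\psi_{[a,b]})\), where \(\psi\) is a suitable morphism \(G\to F\) so that the second component points the right way. This is where there is a variance problem. A morphism \(\cotwK F\to\cotwK G\) in \(\cotwD\) needs a second component \(\K G_a^b\to\K F_a^b\), but \(\K\phi\) goes \(\K F\to\K G\). I would therefore read "functor" as into \(\Nar\) with its morphisms taken as natural transformations in the appropriate direction: either restrict \(\cotwK\) to isomorphisms, or interpret it contravariantly on the codomain side as the convention dictates. I would then verify naturality of \(\alpha\) with respect to \(\phi\), namely \(\K\phi\circ\alpha^F=\alpha^G\circ\phi\), which follows from the universal property of the colimit. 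I expect the variance bookkeeping in this last step, together with the canonical-isomorphism identification in (iii), to be where the real care lies.
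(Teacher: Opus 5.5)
Your steps (i)--(iii) are correct and are essentially the paper's proof. The paper obtains $\K F(i)$ and the identity $\alpha_c^d=\K F(i)\circ\alpha_a^b\circ F(i)$ from the universal property of the colimit. It deduces functoriality of $\cotwK F$ on $\T^{op}$ from that of $F$ and $\K F$. It then checks the sheaf condition by taking $p$ from the pullback $F_a^b$ and $c$ from the pushout $\K F_a^b$ (using that $\K F$ is a cosheaf), and verifying $u=c\circ\alpha_a^b\circ p$. Your (iii) recognizes the square as the explicit pullback built in the proof of Lemma~\ref{lem:cotwisted-pullbacks} and identifies the induced map $\K F(i_1)\circ\alpha_a^p\circ F(i_1)$ with $\alpha_a^b$ via (i). That is the same argument repackaged; the paper's check $u=c\circ\alpha_a^b\circ p$ implicitly relies on exactly that identity.

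The gap is (iv), and it cannot be closed as stated. The paper's proof never defines $\cotwK$ on a morphism $\phi\colon F\to G$ of $\Pe$; its ``functoriality'' sentence concerns $\cotwK F$ as a presheaf. Your sketch does not supply this either. The ``suitable'' $\psi\colon G\to F$ need not exist. Moreover, the equation $\K\phi\circ\alpha^F=\alpha^G\circ\phi$ makes $(\phi,\K\phi)$ a morphism of the arrow category, whereas a morphism $\cotwK F_a^b\to\cotwK G_a^b$ in $\cotwD$ needs some $v\colon\K G_a^b\to\K F_a^b$ with $\alpha^F=v\circ\alpha^G\circ\phi$.

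This is not a matter of convention: no functor, covariant or contravariant, extends the object assignment. Take $\T=\cat T_0$ and $\D$ the two-element lattice $\{0<1\}$, which is small, complete and cocomplete. Then $\Pe\cong\D$, $\K F=F$ and $\alpha=\id$, so $\cotwK x=(x\xrightarrow{\id}x)$. A morphism $\cotwK x\to\cotwK y$ in $\Nar$ is then a pair $(u\colon x\to y,\ v\colon y\to x)$ with $v\circ u=\id_x$. Hence $\mathrm{Hom}(\cotwK 0,\cotwK 1)$ and $\mathrm{Hom}(\cotwK 1,\cotwK 0)$ are both empty, while $\mathrm{Hom}(0,1)\neq\emptyset$. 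The same happens for $\emptyset\to 1$ in $\Set$. So your ``contravariant'' reading fails too.

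What survives is your first option. On isomorphisms, $\phi\mapsto(\phi,\K(\phi^{-1}))$ is functorial: $\K(\phi^{-1})\circ\alpha^G\circ\phi=\alpha^F$ by naturality of the colimit legs, and naturality in $[a,b]$ follows from that of $\phi$ and $\K(\phi^{-1})$. The provable content is therefore the object-level construction of (i)--(iii), which is all the paper's proof actually establishes, plus functoriality on the groupoid of isomorphisms.
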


\begin{proof}
For each interval \([a,b]\), the object \(\K F_a^b\) is defined by the colimit in Equation~\eqref{eq:K-definition}, and
\[
\alpha_a^b:F_a^b\longrightarrow \K F_a^b
\]
is the corresponding canonical cocone map. If \(i\colon [a,b]\hookrightarrow[c,d] \in \T\), then \(F(i)\colon F_c^d\to F_a^b\) is induced by the functoriality of \(F\in\Pe\). Moreover, \(i\) induces a restriction of the canonical cocone defining \(\K F_c^d\) to a cocone on the diagram defining \(\K F_a^b\). By the universal property of the colimit \(\K F_a^b\), there is a unique morphism
\[
\K F(i)\colon \K F_a^b\longrightarrow \K F_c^d
\]
such that
\[
\alpha_c^d=\K F(i)\circ\alpha_a^b\circ F(i).
\]
Hence,
\[
\bigl(F(i),\K F(i)\bigr)\colon
\bigl(F_c^d\xrightarrow{\alpha_c^d}\K F_c^d\bigr)
\longrightarrow
\bigl(F_a^b\xrightarrow{\alpha_a^b}\K F_a^b\bigr)
\]
is a morphism in \(\cotwD\). Functoriality of \(\cotwK F\colon \T^{op}\to\cotwD\) follows immediately from the functoriality of \(F\), of \(\K F\), and from the uniqueness of the induced maps between the corresponding colimits.

It remains to show that \(\cotwK F\) is a sheaf. Let \([a,b]\in\T\), and consider the cover \(([a,p],[p,b])\). We must show that \(\cotwK F_a^b\) is the following pullback in \(\cotwD\).

% https://q.uiver.app/#q=WzAsNCxbMiwxLCJcXGNvdHd7XFxLfSBYX3BeYiAiXSxbMSwyLCJcXGNvdHd7XFxLfSBYX3BecCJdLFsxLDAsIlxcY290d3tcXEt9IFhfYV5iIl0sWzAsMSwiXFxjb3R3e1xcS30gWF9hXnAiXSxbMCwxLCIoWChrKSxcXEsgWChrKSkiXSxbMiwzLCIoWChmKSxcXEsgWChmKSkiLDJdLFszLDEsIihYKGgpLFxcSyBYKGgpKSIsMl0sWzIsMCwiKFgoZyksXFxLIFgoZykpIl0sWzUsNCwiPSIsMSx7InNob3J0ZW4iOnsic291cmNlIjoyMCwidGFyZ2V0IjoyMH0sInN0eWxlIjp7ImJvZHkiOnsibmFtZSI6Im5vbmUifSwiaGVhZCI6eyJuYW1lIjoibm9uZSJ9fX1dXQ==
\[\begin{tikzcd}
	& {\cotwK  F_a^b} & \\
	{\cotwK  F_a^p} && {\cotwK  F_p^b } \\
	& {\cotwK  F_p^p}
	\arrow[""{name=0, anchor=center, inner sep=0}, "{(F(f),\K F(f))}"', from=1-2, to=2-1]
	\arrow["{(F(g),\K F(g))}", from=1-2, to=2-3]
	\arrow["{(F(h),\K F(h))}"', from=2-1, to=3-2]
	\arrow[""{name=1, anchor=center, inner sep=0}, "{(F(k),\K F(k))}", from=2-3, to=3-2]
	\arrow["{=}"{description}, draw=none, from=0, to=1]
\end{tikzcd}\]

This square is well defined in \(\cotwD\). Indeed, \(F\) is a persistent sheaf, so \(F_a^b\) is the pullback of \(F_a^p\to F_p^p\leftarrow F_p^b\), whereas \(\K F\) is a cumulative narrative, so \(\K F_a^b\) is the pushout of \(\K F_a^p\leftarrow \K F_p^p\to \K F_p^b\).  Let \(u\colon x\to y\) be an arbitrary object of \(\cotwD\), together with compatible morphisms \((p_l,c_l)\colon (x\xrightarrow{u}y)\to\cotwK F_a^p\) and \((p_r,c_r)\colon (x\xrightarrow{u}y)\to\cotwK F_p^b\), as in the following diagram.

% https://q.uiver.app/#q=WzAsNCxbMiwxLCJcXGNvdHd7XFxLfSBYX3BeYiAiXSxbMSwyLCJcXGNvdHd7XFxLfSBYX3BecCJdLFsxLDAsInUiXSxbMCwxLCJcXGNvdHd7XFxLfSBYX2FecCJdLFswLDEsIihYKGspLFxcSyBYKGspKSJdLFszLDEsIihYKGgpLFxcSyBYKGgpKSIsMl0sWzIsMCwiKHBfciwgY19yKSJdLFsyLDMsIihwX2wsIGNfbCkiLDJdLFs3LDQsIj0iLDEseyJzaG9ydGVuIjp7InNvdXJjZSI6MjAsInRhcmdldCI6MjB9LCJzdHlsZSI6eyJib2R5Ijp7Im5hbWUiOiJub25lIn0sImhlYWQiOnsibmFtZSI6Im5vbmUifX19XV0=
\[\begin{tikzcd}
	& u & \\
	{\cotwK  F_a^p} && {\cotwK  F_p^b } \\
	& {\cotwK  F_p^p}
	\arrow[""{name=0, anchor=center, inner sep=0}, "{(p_l, c_l)}"', from=1-2, to=2-1]
	\arrow["{(p_r, c_r)}", from=1-2, to=2-3]
	\arrow["{(F(h),\K F(h))}"', from=2-1, to=3-2]
	\arrow[""{name=1, anchor=center, inner sep=0}, "{(F(k),\K F(k))}", from=2-3, to=3-2]
	\arrow["{=}"{description}, draw=none, from=0, to=1]
\end{tikzcd}\]

Equivalently, the following diagram in \(\D\) commutes.

\[\begin{tikzcd}
	x &&& \\
	\\
	&& {F_a^b} \\
	& {F_a^p} && {F_p^b} \\
	&& {F_p^p} \\
	&& {\K  F_p^p} \\
	& {\K  F_a^p} && {\K  F_p^b } \\
	&& {\K F_a^b} \\
	\\
	y
	\arrow[""{name=0, anchor=center, inner sep=0}, "{p_l}", from=1-1, to=4-2]
	\arrow[""{name=1, anchor=center, inner sep=0}, "{p_r}", curve={height=-30pt}, from=1-1, to=4-4]
	\arrow[""{name=2, anchor=center, inner sep=0}, "u"', from=1-1, to=10-1]
	\arrow["{F(f)}"', from=3-3, to=4-2]
	\arrow["{F(g)}", from=3-3, to=4-4]
	\arrow["\lrcorner"{anchor=center, pos=0.125, rotate=-45}, draw=none, from=3-3, to=5-3]
	\arrow["{=}"{description}, draw=none, from=4-2, to=4-4]
	\arrow["{F(h)}"', from=4-2, to=5-3]
	\arrow[""{name=3, anchor=center, inner sep=0}, "{\alpha_a^p}"', from=4-2, to=7-2]
	\arrow["{F(k)}", from=4-4, to=5-3]
	\arrow[""{name=4, anchor=center, inner sep=0}, "{\alpha_p^b}", from=4-4, to=7-4]
	\arrow[""{name=5, anchor=center, inner sep=0}, "{\alpha_p^p}"', from=5-3, to=6-3]
	\arrow[""{name=6, anchor=center, inner sep=0}, "{\K F(f)}"', from=6-3, to=7-2]
	\arrow["{\K F(g)}", from=6-3, to=7-4]
	\arrow["{\K F(h)}"', from=7-2, to=8-3]
	\arrow[""{name=7, anchor=center, inner sep=0}, "{c_l}"'{pos=0.3}, from=7-2, to=10-1]
	\arrow[""{name=8, anchor=center, inner sep=0}, "{\K F(k)}", from=7-4, to=8-3]
	\arrow[""{name=9, anchor=center, inner sep=0}, "{c_r}", curve={height=-30pt}, from=7-4, to=10-1]
	\arrow["\lrcorner"{anchor=center, pos=0.125, rotate=135}, draw=none, from=8-3, to=6-3]
	\arrow["{=}"{description}, draw=none, from=0, to=1]
	\arrow["{=}"{description}, draw=none, from=2, to=3]
	\arrow["{=}"{description}, draw=none, from=3, to=5]
	\arrow["{=}"{description}, draw=none, from=5, to=4]
	\arrow["{=}"{description}, draw=none, from=6, to=8]
	\arrow["{=}"{description}, draw=none, from=7, to=9]
\end{tikzcd}\]

Since the upper square is a pullback, there is a unique morphism \(p\colon x\to F_a^b\) such that \(F(f)\circ p=p_l\) and \(F(g)\circ p=p_r\). Likewise, since the lower square is a pushout, there is a unique morphism \(c\colon \K F_a^b\to y\) such that \(c\circ\K F(h)=c_l\) and \(c\circ\K F(k)=c_r\). Moreover, the commutativity of the outer diagrams implies that \(u=c\circ\alpha_a^b\circ p\), so \((p,c)\colon (x\xrightarrow{u}y)\to (F_a^b\xrightarrow{\alpha_a^b}\K F_a^b)\) is a morphism in \(\cotwD\). Since \(p\) and \(c\) are uniquely determined by the pullback and pushout universal properties, respectively, \((p,c)\) is the unique morphism making the required diagrams commute. Therefore, \(\cotwK F_a^b\) satisfies the universal property of the pullback in \(\cotwD\), and hence, \(\cotwK F\) is a sheaf.
\end{proof}

\paragraph{The persistent completion.}

Dually, every cumulative narrative admits a canonical persistent completion into a \(\cotwD\)-valued narrative.

\begin{proposition}[Persistent completion of a cumulative narrative]\label{prop:per-completion}
There is a functor
\[
\cotwP:\Cu\to\Nar
\]
that assigns to each cumulative narrative \(\hat F\in\Cu\) the \(\cotwD\)-valued narrative \(\cotwP\hat F\) defined as follows.
\begin{itemize}
    \item On objects \([a,b] \in \T\), the sheaf \(\cotwP \hat{F}\) assigns the morphism
    \[
    \fP \hat{F}_a^b \xrightarrow{\beta_a^b} \hat{F}_a^b,
    \]
    viewed as an object in the cotwisted arrow category \(\cotwD\). The morphism \(\beta_a^b\) is the limit cone morphism from \(\fP \hat{F}_a^b\) to the object \(\hat{F}_a^b\).
    
    \item On morphisms \([a,b] \xhookrightarrow[]{i} [c,d]\) in \(\T\), the sheaf \(\cotwP \hat{F}\) assigns the morphism in \(\cotwD\)
    \[
    (\fP \hat{F}_c^d \xrightarrow{\fP \hat{F}(i)} \fP \hat{F}_a^b, \hat{F}_a^b \xrightarrow{\hat{F}(i)} \hat{F}_c^d),
    \]
    which satisfies the cotwisted commutativity condition

% https://q.uiver.app/#q=WzAsNCxbMSwwLCJcXGZQIFxcaGF0e1h9X2NeZCAiXSxbMSwzLCJcXGhhdHtYfV9jXmQiXSxbMCwxLCJcXGZQIFxcaGF0e1h9X2FeYiJdLFswLDIsIlxcaGF0e1h9X2FeYiJdLFswLDEsIlxcYmV0YV9jXmQgIl0sWzAsMiwiXFxmUCBcXGhhdHtYfShpKSIsMl0sWzMsMSwiXFxoYXR7WH0oaSkiLDJdLFsyLDMsIlxcYmV0YV9hXmIiLDJdLFs3LDQsIj0iLDEseyJzaG9ydGVuIjp7InNvdXJjZSI6MjAsInRhcmdldCI6MjB9LCJzdHlsZSI6eyJib2R5Ijp7Im5hbWUiOiJub25lIn0sImhlYWQiOnsibmFtZSI6Im5vbmUifX19XV0=
\[\begin{tikzcd}
	& {\fP \hat{F}_c^d } \\
	{\fP \hat{F}_a^b} \\
	{\hat{F}_a^b} \\
	& {\hat{F}_c^d}
	\arrow["{\fP \hat{F}(i)}"', from=1-2, to=2-1]
	\arrow[""{name=0, anchor=center, inner sep=0}, "{\beta_c^d }", from=1-2, to=4-2]
	\arrow[""{name=1, anchor=center, inner sep=0}, "{\beta_a^b}"', from=2-1, to=3-1]
	\arrow["{\hat{F}(i)}"', from=3-1, to=4-2]
	\arrow["{=}"{description}, draw=none, from=1, to=0]
\end{tikzcd}\].
Explicitly,
\[
    \beta_c^d =  \hat{F}(i)  \circ \beta_a^b  \circ \fP \hat{F}(i).
\]
\end{itemize}
\end{proposition}

\begin{proof}
The construction is the categorical dual of Proposition~\ref{prop:cum-completion}.  For each interval \([a,b]\), the object \(\fP\hat{F}_a^b\) is defined by the limit in Equation~\eqref{eq:P-definition}, with canonical cone morphism \(\beta_a^b\colon\fP\hat{F}_a^b\to\hat{F}_a^b\). Given an inclusion \(i\colon[a,b]\hookrightarrow[c,d]\), the universal property of the limit induces a unique morphism \(\fP\hat{F}(i)\colon\fP\hat{F}_c^d\to\fP\hat{F}_a^b\) satisfying \(\beta_c^d=\hat{F}(i)\circ\beta_a^b\circ\fP\hat{F}(i)\). Hence \((\fP\hat{F}(i),\hat{F}(i))\) is a morphism in \(\cotwD\), and these assignments define the functor \(\cotwP \hat{F}\).

To verify the sheaf axiom, let \(([a,p],[p,b])\) be a cover of \([a,b]\). Given an arbitrary object \(u\colon x\to y\) of \(\cotwD\) together with compatible morphisms to \(\cotwP\hat{F}_a^p\) and \(\cotwP\hat{F}_p^b\), since \(\hat{F}\) is a cumulative narrative, \(\hat{F}_a^b\) is the pushout of \(\hat{F}_a^p\leftarrow\hat{F}_p^p\rightarrow\hat{F}_p^b\), while \(\fP\hat{F}\) is a persistent narrative, so \(\fP\hat{F}_a^b\) is the pullback of \(\fP\hat{F}_a^p\rightarrow\fP\hat{F}_p^p\leftarrow\fP\hat{F}_p^b\). The universal properties of the pushout \(\hat{F}_a^b\) and the pullback \(\fP\hat{F}_a^b\) yield unique morphisms \(p\colon x\to\fP\hat{F}_a^b\) and \(c\colon\hat{F}_a^b\to y\), which satisfy the cotwisted compatibility condition and therefore determine the unique morphism \((p,c)\) in \(\cotwD\). The verification of this compatibility is exactly dual to that in Proposition~\ref{prop:cum-completion}, and is therefore omitted.
\end{proof}

\begin{proof}[Proof of Theorem~\ref{thm:adjunction-factorization}]
The functors involved are well defined by Proposition~\ref{prop:dom-cod-functors}, Proposition~\ref{prop:cum-completion}, and Proposition~\ref{prop:per-completion}. It remains to verify the  identities
\[
\cotwcod\circ\cotwK=\K,
\qquad
\cotwdom\circ\cotwP=\fP,
\]
\[
\cotwdom\circ\cotwK=\id_{\Pe},
\qquad
\cotwcod\circ\cotwP=\id_{\Cu}.
\]

For \(F\in\Pe\), we have \(\cotwK F_a^b=(F_a^b\xrightarrow{\alpha_a^b}\K F_a^b)\) and, for every inclusion \(i:[a,b]\hookrightarrow[c,d]\), \(\cotwK F(i)=\bigl(F(i),\K F(i)\bigr)\). Hence,
\[
\cotwcod(\cotwK F)=\K F,
\qquad
\cotwdom(\cotwK F)=F,
\]
so \(\cotwcod\circ\cotwK=\K\) and \(\cotwdom\circ\cotwK=\id_{\Pe}\).

Similarly, for \(\hat F\in\Cu\), we have \(\cotwP\hat F_a^b=(\fP\hat F_a^b\xrightarrow{\beta_a^b}\hat F_a^b)\) and, for every inclusion \(i:[a,b]\hookrightarrow[c,d]\), \(\cotwP\hat F(i)=\bigl(\fP\hat F(i),\hat F(i)\bigr)\). Hence,
\[
\cotwdom(\cotwP\hat F)=\fP\hat F,
\qquad
\cotwcod(\cotwP\hat F)=\hat F,
\]
so \(\cotwdom\circ\cotwP=\fP\) and \(\cotwcod\circ\cotwP=\id_{\Cu}\).
\end{proof}

\subsubsection{Classifying narratives by rigidity}\label{subsubsec:rigidity}

Since \(X\in\Nar\) is valued in the cotwisted arrow category, it
canonically determines, for every interval \([a,b]\), a comparison morphism
\[
\gamma_a^b:P_a^b\longrightarrow C_a^b,
\]
where
\[
X_a^b=
\left(
P_a^b\xrightarrow{\gamma_a^b}C_a^b
\right).
\]
Applying the completion functors to the persistent and cumulative components of \(X\) produces the narratives
\[
\cotwK(\cotwdom(X))
\qquad\text{and}\qquad
\cotwP(\cotwcod(X)).
\]
By construction of the functors \(\cotwK\) and \(\cotwP\), the structure morphism of \(\cotwK(\cotwdom(X))\) at an interval \([a,b]\) is the component
\[
\bigl(\eta_{\cotwdom(X)}\bigr)_a^b:
\cotwdom(X)_a^b\longrightarrow
(\fP\K\cotwdom(X))_a^b
\]
of the unit of the adjunction, while the structure morphism of \(\cotwP(\cotwcod(X))\) is the component
\[
\bigl(\varepsilon_{\cotwcod(X)}\bigr)_a^b:
(\K\fP\cotwcod(X))_a^b
\longrightarrow
\cotwcod(X)_a^b
\]
of the counit. These assemble into natural transformations
\[
\eta_{\cotwdom(X)}:
\cotwdom(X)\Longrightarrow
\fP\K(\cotwdom(X))
\]
and
\[
\varepsilon_{\cotwcod(X)}:
\K\fP(\cotwcod(X))
\Longrightarrow
\cotwcod(X),
\]
which we call the \emph{canonical comparison morphisms} associated to the narrative \(X\). These comparison morphisms measure how closely the persistent and cumulative descriptions encoded by a narrative agree with the canonical ones determined by the persistence--accumulation adjunction. This observation motivates the following rigidity classification of narratives.

\begin{definition}[Left rigid narrative]
A narrative \(X\in\Nar\) is called \emph{left rigid} if the comparison morphism
\[
\eta_{\cotwdom(X)}:
\cotwdom(X)
\longrightarrow
\fP\K(\cotwdom(X))
\]
is an isomorphism.
\end{definition}

Equivalently, the persistent component of \(X\) is completely recovered after passing to its canonical cumulative completion and back.

\begin{example}[Spans of monomorphisms in adhesive categories]
Assume that the ambient category \(\D\) is adhesive, and let \(X\in\Nar\) be a narrative whose persistent component
\[
\cotwdom(X)_0^0
\xleftarrow{}
\cotwdom(X)_0^1
\xrightarrow{}
\cotwdom(X)_1^1
\]
consists of monomorphisms. Since pushouts along monomorphisms are Van Kampen~\cite{LackAdhesive}, the associated pushout square is also a pullback. Hence, \(\fP\K(\cotwdom(X))\cong\cotwdom(X)\), and therefore, \(X\) is left rigid.

This class of examples is particularly relevant in double-pushout graph rewriting, where rewriting rules are represented by spans of monomorphisms in adhesive categories~\cite{LackAdhesive,fundamentalsOfAlgebraicGraphTransformation}. Consequently, for this broad class of graph transformations, the passage from persistent to cumulative descriptions and back preserves the original persistent description.
\end{example}

\begin{remark}
It is worth emphasizing that the converse need not hold. Although adhesive categories guarantee that spans of monomorphisms are preserved by the pushout--pullback composite, they do not in general imply that cospans are preserved by the pullback--pushout composite. Thus, even in adhesive categories, left rigidity does not automatically imply right rigidity.
\end{remark}

\begin{definition}[Right rigid narrative]
A narrative \(X\in\Nar\) is called \emph{right rigid} if the comparison morphism
\[
\varepsilon_{\cotwcod(X)}:
\K\fP(\cotwcod(X))
\longrightarrow
\cotwcod(X)
\]
is an isomorphism.
\end{definition}

Equivalently, the cumulative component of \(X\) is completely recovered after passing to its canonical persistent completion and back.

\begin{example}[A right rigid narrative]
Consider the narrative \(X\) given by:
\[
\begin{tikzcd}[column sep=large,row sep=large]
& \{0,1\} \arrow[dl,"p_\ell"'] \arrow[dr,"p_r"] & \\
\{a\} \arrow[dr,"c_\ell"'] && \{b\} \arrow[dl,"c_r"] \\
& \{*\}. &
\end{tikzcd}
\]
Applying \(\fP\) to the cumulative component \(\cotwcod(X)\) computes the pullback \(\{a\}\times_{\{*\}}\{b\}\cong\{(a,b)\}.\)
Applying \(\K\) to the persistent narrative
\(\{a\}\xleftarrow{\pi_\ell}\{(a,b)\}\xrightarrow{\pi_r}\{b\} =\fP(\cotwcod(X))\)
computes its pushout, which is again the singleton \(\{*\}\). Therefore, the counit
\(\varepsilon_{\cotwcod(X)}: \K\fP(\cotwcod(X)) \longrightarrow \cotwcod(X)\) is an isomorphism. Hence, \(X\) is right rigid. Moreover, \(X\) is not left rigid. Indeed, \(\fP\K(\cotwdom(X))\) has apex \(\{(a,b)\}\), while \(\cotwdom(X)\) has apex \(\{0,1\}\). Hence the unit \(\eta_{\cotwdom(X)}:\cotwdom(X)\longrightarrow\fP\K(\cotwdom(X))\) is not an isomorphism.
\end{example}

\begin{definition}[Rigid and loose narratives]
A narrative is called
\begin{itemize}
    \item \emph{rigid} if it is both left rigid and right rigid;
    \item \emph{loose} if it is neither left rigid nor right rigid.
\end{itemize}
\end{definition}

\begin{example}[Rigid narratives]
The simplest examples of rigid narratives are the constant ones. Let \(d\in\D\). The \emph{constant narrative} is the \(\cotwD\)-valued sheaf \(X\) defined by \(X_a^b=(d\xrightarrow{\id_d}d)\) for every interval \([a,b]\in\T\), with every restriction morphism equal to \((\id_d,\id_d)\). Since both the persistent and cumulative components are constant, one has
\[
\cotwdom(X)=\fP(\cotwcod(X))
\qquad\text{and}\qquad
\cotwcod(X)=\K(\cotwdom(X)).
\]
Hence \(X\) is rigid.  More generally, the same conclusion holds whenever the morphisms of the narrative are isomorphisms. Indeed, replacing the identities above by arbitrary isomorphisms does not change either the pullback or pushout constructions up to canonical isomorphism, so both comparison morphisms remain isomorphisms. Consider, for instance:
\[
\begin{tikzcd}[column sep=large,row sep=large]
& \{(a,b)\} \arrow[dl,"\pi_\ell"'] \arrow[dr,"\pi_r"] & \\
\{a\} \arrow[dr,"c_\ell"'] && \{b\} \arrow[dl,"c_r"] \\
& \{*\}. &
\end{tikzcd}
\]
The persistent component \(\{a\}\xleftarrow{\pi_\ell}\{(a,b)\}\xrightarrow{\pi_r}\{b\}\) is already the pullback of the cospan \(\{a\}\xrightarrow{c_\ell}\{*\}\xleftarrow{c_r}\{b\}\), while the pushout of this pullback is again the singleton \(\{*\}\).
\end{example}

\begin{example}[Loose narratives]
Loose narratives already appear in very simple categories. Consider the poset category \((\mathbb N,\leq)\), whose objects are natural numbers and in which there exists a unique morphism \(m\to n\) precisely when \(m\leq n\). In this category, pullbacks are given by meets \(m\times_p n=\min\{m,n\}\), while pushouts are given by joins \(m+_p n=\max\{m,n\}\).

Consider the narrative
% https://q.uiver.app/#q=WzAsNCxbMSwwLCIwIl0sWzAsMSwiMSJdLFsxLDIsIjMiXSxbMiwxLCIyIl0sWzAsMV0sWzMsMl0sWzAsM10sWzEsMl1d
\[\begin{tikzcd}
	& 0 & \\
	1 && 2 \\
	& 3
	\arrow[from=1-2, to=2-1]
	\arrow[from=1-2, to=2-3]
	\arrow[from=2-1, to=3-2]
	\arrow[from=2-3, to=3-2]
\end{tikzcd}\]
Its domain is the span \(\cotwdom(X)=\left(1\leftarrow 0\rightarrow 2\right)\), and applying the completion functors yields \(\K(\cotwdom(X))=\left(1\rightarrow 2\leftarrow 2\right)\) and \(\fP\K(\cotwdom(X))=\left(1\leftarrow 1\rightarrow 2\right)\), which is not isomorphic to \(\cotwdom(X)\). Thus, \(X\) is not left rigid. Likewise, its codomain is the cospan \(\cotwcod(X)=\left(1\rightarrow 3\leftarrow 2\right)\), and applying the completion functors gives \(\fP(\cotwcod(X))=\left(1\leftarrow 1\rightarrow 2\right)\) and \(\K\fP(\cotwcod(X))=\left(1\rightarrow 2\leftarrow 2\right)\), which is not isomorphic to \(\cotwcod(X)\). Hence, \(X\) is not right rigid, and therefore it is loose.
\end{example}

The central question explored in this vignette is when persistent and cumulative descriptions determine one another exactly. Investigating the fixed points of the persistence--accumulation adjunction naturally leads to the category of narratives, which simultaneously records both descriptions together with the comparison between them. This approach provides a specialized framework for systematically investigating the conditions—relating to data category and temporal data assignments—under which information is preserved as one transitions between persistent and cumulative representations.

\subsection{Vignette 2: Decomposing Time-Varying Data into Simple Pieces: Structured Decompositions of Narratives}\label{subsec:decompositions}

Complex data, whether static or time-varying, is often easier to understand and analyze when it is represented as a composite of smaller or simpler pieces. Thus, in this section, we ask: \textit{how does one decompose complicated time-varying data into simpler pieces in a systematic way?} A decomposition serves two roles: (1) it divides an object into component pieces, and (2) it records how these pieces overlap, so that the original object may be reconstructed by gluing them together. Thinking about this with a more topological slant, there is a sense in which this section is about seeking simple coverings of a temporal object by small, time-varying pieces, analogous to open sets, whose evolution and mutual compatibility are themselves tracked through time. 

Our goal is to obtain an invariant of the structural complexity of a time-varying object by measuring the complexity of the pieces in its decompositions and their overlaps. These pieces must themselves form time-varying objects, with structure maps recording how they persist, merge, split, or disappear. It is not enough to decompose each snapshot independently: the pieces of the decomposition must also capture the evolution of the global structure. We seek a method that reuses theories of decomposition for static data instead of defining a new notion for each temporal setting. This section is based on the recent paper by Bumpus and Nickel~\cite{decomposingtimevaryingdata2026} whose main result provides such a method by lifting theories of decompositions from static objects to persistent narratives.

\newcommand{\Grefl}{\textup{\textsf{Grph}}_\textup{\textsf{refl}}}

\subsubsection{Structured decompositions}
\label{subsubsection:structured-decompositions}

In this section, we use narratives to construct a temporalized theory of structured decompositions, which were introduced in~\cite{Bumpus-et-al_Structured-Decompositions} and provide a category theoretical generalization of tree-decompositions. The goal of our approach is to split time-varying data systematically into smaller components, providing a formal framework for dealing with temporal systems. This is an instance of the broader principle of compositionality, which states that the meaning or behaviour of a complex system is completely determined by the meanings or behaviours of its constituent parts and the rules governing their connections.

Breaking complicated data into simpler pieces has already proved useful in many areas of mathematics and computer science, particularly in graph theory, logic, algorithms, and complexity theory. Prominent examples include Robertson and Seymour's graph structure theorem~\cite{Robertson-Seymour_Graph-minors-XVII} and Courcelle's theorem~\cite{Courcelle}.

Here, a \emph{graph} $G$ means a diagram from the category $V\rightrightarrows E$ to the category $\Set$ of sets. Thus, a graph in this sense is the same as a \emph{quiver} in representation theory and as a copresheaf on $V\rightrightarrows E$ in category theory.
In particular, our graphs are directed and are allowed to possess loops and multiple edges. Given a graph~$G$, we denote the set of its \emph{vertices} by $V(G)\coloneqq G(V)$ and the set of its \emph{edges} by $E(G)\coloneqq G(E)$.
We write $\Grph$ for the wide subcategory of the functor category $[ V\rightrightarrows E, \Set]$ whose objects are the graphs.
Its morphisms are simply natural transformations between graphs, also called \emph{graph morphisms}.

\paragraph{Tree-decompositions.}
Originally introduced by Halin in 1976~\cite{Halin}, the notion of a \textit{tree-decomposition} (see Definition~\ref{defi:tree-decomposition}) was rediscovered in 1984 by Robertson and Seymour~\cite{Robertson-Seymour_Graph-minors-III} and has since played an essential role in structural and extremal graph theory, as well as in various other areas of mathematics and computer science. For instance, tree-decompositions have proved to be a convenient tool in matrix decomposition~\cite{Liu}, query optimization~\cite{Yannakakis}, and dynamic programming~\cite{Arnborg-Proskurowski}. They are also frequently used to solve constraint satisfaction problems~\cite{Dechter-Pearl} and in junction tree algorithms for probabilistic inference~\cite{Lauritzen-Spiegelhalter}. Robertson and Seymour themselves used tree-decompositions as a crucial component of their celebrated graph structure theorem~\cite{Robertson-Seymour_Graph-minors-XVII}, which establishes a profound connection between graph minor theory and the theory of topological embeddings. Its significance is further illustrated by applications to the disjoint paths problem~\cite{Robertson-Seymour_Graph-minors-XIII}, Sachs' linkless embedding conjecture~\cite{Robertson-Seymour-Thomas}, and Courcelle's theorem~\cite{Courcelle}.

Moreover, tree-decompositions have proved very useful for investigating compositional structures. They serve as an efficient topological tool for tackling algorithmic graph problems and have been used to improve the efficiency of dynamic programming. For example, with the aid of tree-decompositions, several algorithmic problems that are NP-hard on arbitrary graphs may be solved efficiently by dynamic programming on graphs of bounded tree-width. A concrete example is the problem of finding a maximum independent set in graphs of bounded tree-width.

To acknowledge the benefit of structured decompositions (see Definition~\ref{defi:structured-decompositions}) we make the notion of ordinary tree-decompositions precise.   

\begin{definition}\label{defi:tree-decomposition}
	A \textit{tree-decomposition} of a graph~$G$ is a pair $(T, (V_t)_{t\in\Ob(T)})$ consisting of a tree~$T$ and a family of vertex sets $V_t\subseteq V(G)$ indexed by the vertices $t$ of~$T$ and satisfying the following two conditions:
	\begin{enumerate}[label=(T\theenumi), leftmargin=35pt]
		\item $G=\bigcup_{t\in\Ob(T)}G[V_t]$ where $G[V_t]$ denotes the subgraph of~$G$ induced by~$V_t$.
		\item For each vertex $v$ of~$G$, the induced subgraph $T_v := T[\{t\in T\mathbin| v\in V_t\}]\subseteq T$ is connected.
	\end{enumerate}
	We call $T$ the \textit{decomposition tree} or the \textit{model} of the decomposition. The subsets $V_t\subseteq V(G)$ are called the \textit{bags}, and the induced subgraphs $G[V_t]\subseteq G$ are the corresponding \textit{parts}.
\end{definition}

Intuitively, a tree-decomposition reveals the global structure of a graph whenever that structure is tree-like. An example of a tree-decomposition is shown in Figure~\ref{fig:tree-decomposition}.

\begin{figure}[H]
	\centering
	\includegraphics[width=0.8\textwidth]{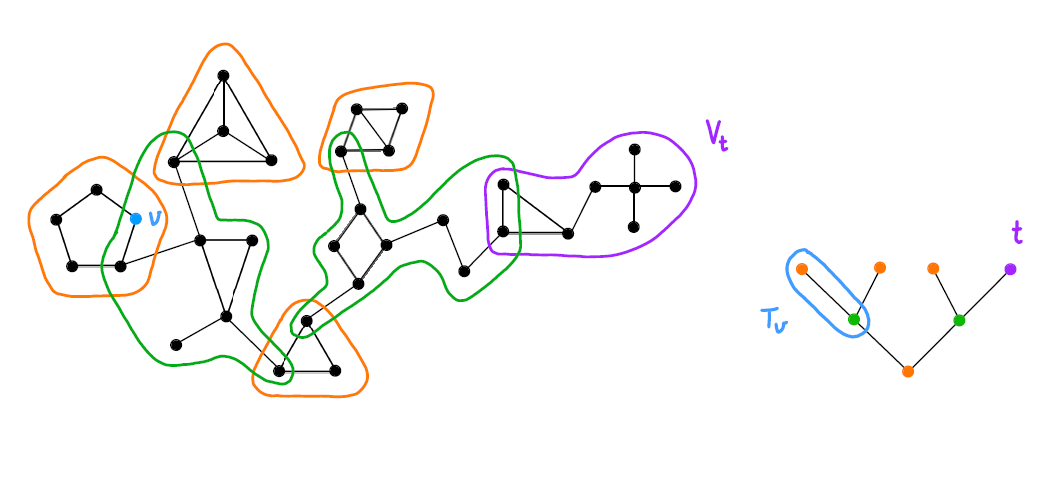}
	\setlength{\abovecaptionskip}{-1cm}
	\caption{A tree-decomposition of a graph (on the left-hand side) and its decomposition tree (on the right-hand side).}
	\label{fig:tree-decomposition}
\end{figure}

\paragraph{Tree-width.}
\label{paragraph:tree-width}

Tree-decompositions also provide a convenient tool for constructively computing the \textit{tree-width} of a graph, as explained below. Originally introduced by Bertelè and Brioschi, the notion of tree-width has become indispensable in mathematics and computer science. For instance,~\cite{Fujita_Width} provides a revealing overview of major applications of tree-width and more general notions of graph width that have been developed from the classical tree-width parameter to capture a broader range of contexts.

The tree-width of a graph~$G$ measures the extent to which the structure of~$G$ resembles that of a tree. Indeed, the tree-like structure of the graph is reflected in the parts of its decomposition: the smaller these parts are, the more tree-like~$G$ is.

\begin{definition}\label{defi:width-tree-width}
	The \textit{width} of a tree-decomposition $(T, (V_t)_{t\in\Ob(T)})$ is the maximum order of its bags minus one,
	$\max_{t\in V(T)} \abs{V_t}-1$.
	The \textit{tree-width} $\textup{tw}(G)$ of a graph~$G$ is the minimum width of its tree-decompositions,
	\[ \textup{tw}(G) \coloneqq \min_{(T,(V_t)_{t\in\Ob(T)})} \, \max_{t\in V(T)} \abs{V_t}-1 \]
	where the minimum is taken over all tree-decompositions of~$G$.
\end{definition}
The reason for the ``minus one'' in the definition of the width of a tree-decomposition is to ensure that the tree-width of any tree is one, as expected from a reasonable measure of structural tree-likeness.

\paragraph{Category theoretical generalization.}
To provide a general category-theoretical framework for tree-decompositions and extend their scope of application, the notion of structured decompositions and, more specifically, spined structured decomposition categories (or simply \emph{spined sd-categories}) was introduced in~\cite{Bumpus-et-al_Structured-Decompositions}. These provide a unified axiomatic setting for studying tree-width and its generalizations. The framework recovers several notions of graph width, including ordinary tree-width, complemented tree-width, tree independence number, hypergraph tree-width, and layered tree-width. Thus, spined sd-categories capture the classical notion of tree-width together with many of its variants. We define them explicitly in Definition~\ref{defi:structured-decompositions}.

\subsubsection{Temporalization of spined sd-categories}

\paragraph{Need for temporalization.}
Structured decompositions are concerned only with static graphs, meaning graphs in the traditional sense, consisting merely of a set of vertices together with a set of edges connecting them. When applying the existing concepts and results to time-varying data, however, this conventional notion of a graph is no longer sufficient. Many systems of interest evolve over time, and traditional graph-theoretical techniques ignore this temporal dimension. To capture such evolution, a variety of notions of temporal graphs have been introduced. These differ in how they represent temporal information and the evolution of the underlying graph. For an overview of the diversity of existing approaches, we refer to~\cite{Harary-Gupta, Kempe-et-al, Casteigts-et-al, Holme-Saramaeki, Holme, Michail}.

Motivated by the increasing use of temporal graphs to model time-dependent data, we develop a notion of spined sd-categories for time-varying graphs and, more generally, time-varying structures. This is the main objective of the remainder of this section, which is based on~\cite{decomposingtimevaryingdata2026}.

\paragraph{Combining spined sd-categories and narratives.}
To temporalize spined sd-categories, we reinterpret the theory of structured decompositions developed in~\cite{Bumpus-et-al_Structured-Decompositions} within the framework of persistent and cumulative narratives developed in~\cite{theorytimevaryingdata2026}. This yields a time-dependent generalization of spined sd-categories.

\paragraph{Structured decompositions.}
\begin{definition}
Let \(J\) be a graph. Its \emph{barycentric subdivision} is the category \(\int J\) obtained from \(J\) by replacing each vertex with an object and each edge \(e\), with endpoints \(v\) and \(w\), by an object \(e\) together with morphisms \(e_v:e\to v\) and \(e_w:e\to w\), that is, a span \(v\xleftarrow{e_v}e\xrightarrow{e_w}w\).
\end{definition}

\begin{example}
	The barycentric subdivision of the triangle $K^3$ depicted on the left is the category visualized on the right.
	\begin{equation*}
		\begin{tikzcd}[row sep=large]
			& v \arrow[ld, "a", no head, swap] \arrow[rd, "b", no head]
			\\ u \arrow[rr, "c", no head, swap] & & w
		\end{tikzcd}
		\qquad \qquad
		\begin{tikzcd}[row sep=large, column sep=huge]
			a \arrow[d, "a_u", swap] \arrow[rd, "a_v", pos=0.8] & c \arrow[ld, "c_u", pos=0.2, swap] \arrow[rd, "c_w", pos=0.2] & b \arrow[ld, "b_v", pos=0.8, swap] \arrow[d, "b_w"] 
			\\ u & v & w
		\end{tikzcd}
	\end{equation*}
\end{example}

\begin{definition}\label{defi:structured-decompositions}
	Let $J$ be a graph, and let $\D$ be a category.
	A \textit{$J$-structured decomposition in~$\D$} is a functor of shape $d\colon\int J\to\D$ with the property that for every morphism $k\colon\thinspace x\to y$ in~$\int J$, its image $d(k)\colon\thinspace d(x)\to d(y)$ under~$d$ is a monomorphism in~$\D$.
\end{definition}

\paragraph{Relevance of structured decompositions and schematic visualizations.}
We illustrate the breadth of structured decompositions by discussing four examples from different mathematical domains, accompanied by schematic visualizations.

\begin{enumerate}
	\renewcommand{\labelenumi}{(\theenumi)}
\item Tree-decompositions. As already mentioned, the development of structured decompositions was originally inspired by tree-decompositions in graph theory, making them perhaps the most prominent example. These combinatorial objects can be described as tree-shaped structured decompositions with values in the category~$\Grph$ of graphs, that is, functors of the form \(\int T\to\Grph\) for some tree~\(T\). Replacing \(T\) with an arbitrary graph leads to the more general notion of a graph-decomposition, studied, for example, in~\cite{Carmesin-et-al_Graph-decompositions,Diestel-et-al_Graph-decompositions}.

The notion of graph-decompositions moreover gives rise to a wide variety of combinatorial width parameters measuring the structural resemblance of a graph to a given graph model, such as a tree, a path, or a cycle. Examples include the classical tree-width together with many of its variants, as discussed in~\cite[Chapter~3]{Bumpus-et-al_Structured-Decompositions}.

Figure~\ref{fig:labelled-example-graphs} shows a structured decomposition in the category~$\Grph$ of graphs shaped by the cycle~\(C^5\) of length five, together with the cycle itself and its barycentric subdivision~\(\int C^5\). The functor \(d\) sends each object \(x_i\) of~\(\int C^5\) corresponding to a vertex \(x_i\) of the graph~\(C^5\) to a complete graph.
    \begin{figure}
        \centering
        \includegraphics[width=0.8\linewidth]{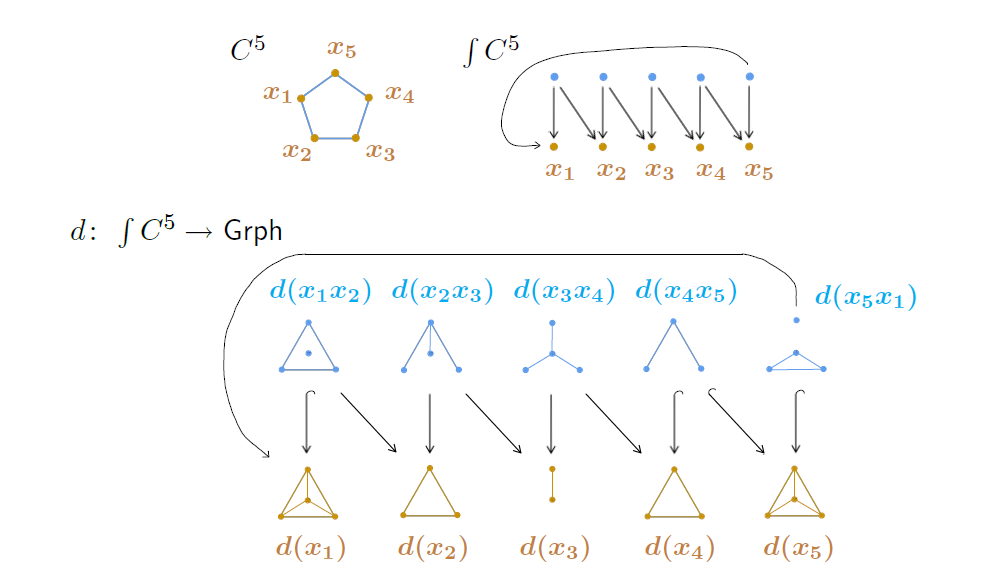}
        \caption{The cycle~$C^5$, the category~$\int C^5$ and a $C^5$-shaped structured decomposition of graphs.}
        \label{fig:labelled-example-graphs}
    \end{figure}

    \item Hybrid dynamical systems.
In his thesis~\cite{Ames_Hybrid-Systems}, Ames establishes a category-theoretical framework for the study of hybrid dynamical systems, that is, dynamical systems with both continuous and discrete components. The thesis introduces the notion of a hybrid object in a category~\(\cat{C}\), defined as a functor from a so-called D-category to~\(\cat{C}\), which may be viewed as a special case of a structured decomposition.

To illustrate this concept, consider a structured decomposition in the category~\(\cat{Man}\) of topological manifolds and continuous maps, shaped by the path~\(P^3\) of length three, as shown in Figure~\ref{fig:labelled-example-manifolds}. We observe that the map \(d(f_1)\colon S^1\to\Sigma_1\), from the topological unit circle~\(S^1\) to the torus~\(\Sigma_1\) (the closed orientable surface of genus one), is homotopic to a constant map. Likewise, the map \(d(f_2)\colon S^1\to\Sigma_2\), where \(\Sigma_2\) denotes the closed orientable surface of genus two, is homotopic to the composite \(S^1\xrightarrow{d(g_3)}\Sigma_1\hookrightarrow\Sigma_2\).
    \begin{figure}
        \centering
        \includegraphics[width=0.8\linewidth]{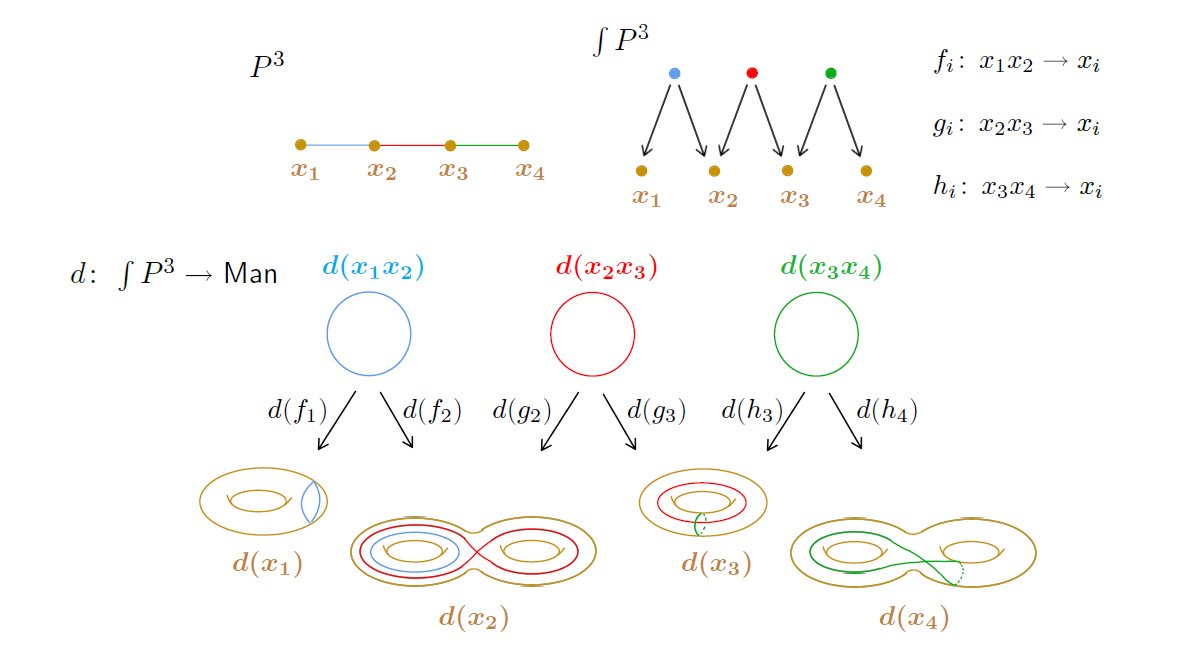}
        \caption{The path~$P^3$, the category~$\int P^3$ and a $P^3$-shaped structured decomposition of manifolds.}
		\label{fig:labelled-example-manifolds}
    \end{figure}

    \item Graphs of groups. The fundamental objects of Bass--Serre theory, namely graphs of groups, are precisely structured decompositions valued in the category of groups and group homomorphisms~\cite{Serre_Arbres,Serre_Trees,Bass}. Restricting to structured decompositions shaped by trees recovers the setting of Bass--Serre theory, in which such decompositions correspond to well-behaved group actions on trees.

As an example, we consider the fundamental groups of the manifolds in the image of the functor \(d\colon\int P^3\to\cat{Man}\) from the previous example, thereby obtaining a \(P^3\)-shaped structured decomposition in the category~\(\cat{Grp}\) of groups, shown in Figure~\ref{fig:labelled-example-groups}. More precisely, this is the composition of the previous structured decomposition in~\(\cat{Man}\) with the fundamental group functor \(\pi_1\colon\cat{Man}\to\cat{Grp}\). In the upper part of the figure, we illustrate closed curves in \(S^1\), \(\Sigma_1\), and~\(\Sigma_2\) generating the corresponding fundamental groups, while the lower part visualizes the resulting structured decomposition of fundamental groups.

As expected, the homotopic maps \(d(f_1)\simeq\mathrm{const}\colon S^1\to\Sigma_1\) induce the same group homomorphism \(\pi_1(S^1)\to\pi_1(\Sigma_1)\), namely the trivial homomorphism. Likewise, the homotopic maps \(d(f_2)\simeq\mathrm{incl}\circ d(g_3)\colon S^1\to\Sigma_2\) induce the same homomorphism \(\pi_1(S^1)\to\pi_1(\Sigma_2)\), namely \(x\mapsto a_1\).
    \begin{figure}
        \centering
        \includegraphics[width=0.8\linewidth]{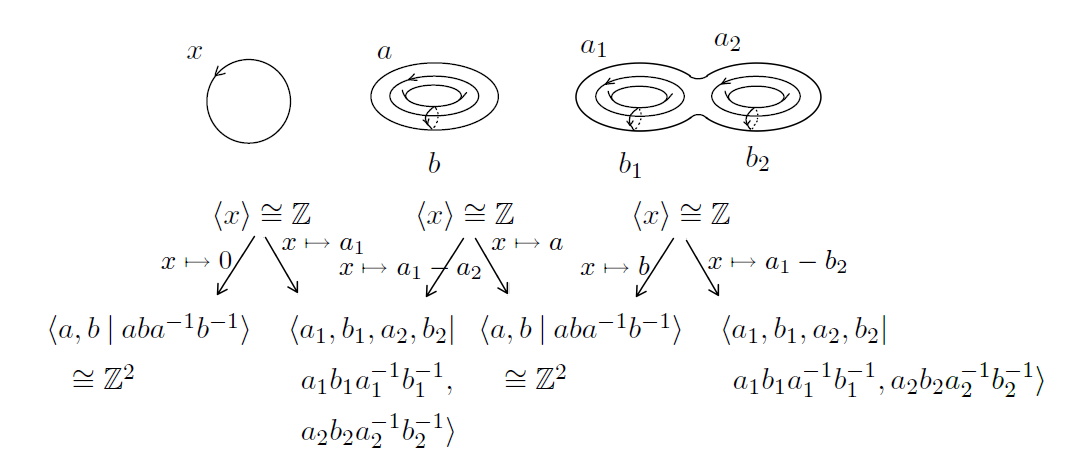}
        \caption{A \(P^3\)-shaped structured decomposition of groups (lower part),
obtained by computing the fundamental groups of the manifolds in Figure~\ref{fig:labelled-example-manifolds} with the chosen generators
(upper part).}
		\label{fig:labelled-example-groups}
    \end{figure}

    \item Cellular sheaves. Cellular sheaves encode local-to-global interactions in systems built from graphs, simplicial complexes, and cell complexes~\cite{Hu}. Later in this chapter, we use them to model multi-agent systems with time-varying communication topologies, illustrating an application of narratives to control theory developed in~\cite{temporal-cellular-sheaves2026}. From the perspective developed here, cellular sheaves arise naturally as the dual notion to structured decompositions: they are structured co-decompositions valued in the category~\(\Vect\) of real vector spaces and linear maps. We nevertheless retain the terminology of cellular sheaf theory in order to align with the existing literature on control theory and multi-agent systems.

\end{enumerate}

\paragraph{Spined sd-categories.}
\begin{definition}
	Let $\D$ be a category, and let $\cG$ be a class of graphs including the trivial graph with exactly one vertex and no edge. The pair $(\D,\cG)$ is said to be a \textit{structured decomposition category}, or simply, an \textit{sd-category}, iff every structured decomposition $d\colon\thinspace\int J\to\D$ in~$\D$ with $J\in\cG$ admits a colimit.
	We call $\cG$ the \textit{index class} of $(\D,\cG)$.
\end{definition}

\begin{definition}
	A \textit{spine} on a category~$\D$ is a family $\Omega = (\Omega_n)_{n\geq0}$ of increasing subcategories 
		\[ \Omega_0\subseteq\Omega_1\subseteq\Omega_2\subseteq\dots \]
	with the following additional properties:
	\begin{itemize}
		\item 
		The union $\bigcup\Omega\coloneqq \bigcup_{n\geq0}\Omega_n$ is closed under isomorphic objects in~$\D$, and for any $n\geq0$, $\Omega_n$~is closed under isomorphic objects as a subcategory of~$\bigcup\Omega$.
		\item 
		For every object $X\in\Ob(\D)$, there exists an object $W\in\Ob(\Omega_n)$ for some integer $n\geq0$ together with a monomorphism $W\rightarrowtail X$ in~$\D$.
		\item 
		Given an integer $n\geq1$ and objects $W$ of~$\Omega_n$ and $\widetilde{W}$ of~$\Omega_{n-1}$ along with a monomorphism $W\rightarrowtail \widetilde{W}$ in~$\D$, then $W$~is already contained in~$\Omega_{n-1}$.
	\end{itemize}	
	A \textit{spined sd-category} is a triple $(\D,\cG,\Omega)$ where $(\D,\cG)$ is an sd-category and $\Omega$ is a spine on~$\D$.		
\end{definition}

\paragraph{Combining persistent narratives and spined sd-categories.}
\label{paragraph:combining-persistent-narratives-spined-sd-cats}
We begin with the persistent perspective; the cumulative perspective and the relation between the two are discussed in Section~\ref{section:future}. Let \(\cat{T}\) be a finite discrete time category, let \(\cat{S}\subseteq\cat{T}\) be a sub-join-semilattice, and let \(\tau:\cat{S}\hookrightarrow\cat{T}\) denote the inclusion functor. Furthermore, let \((\D,\cG,\Omega)\) be a spined sd-category satisfying the following additional condition.
\begin{enumerate}[leftmargin=0.5in]
	\renewcommand{\labelenumi}{(T\theenumi)}
	\item\label{axiom:T1} The category $\D$ and its subcategories $\Omega_n\subseteq\D$ admit all pullbacks, the inclusion functors $\iota_n\colon\thinspace \Omega_n\hookrightarrow\D$ preserve pullbacks, and each $\Omega_n$ is full in~$\D$.
\end{enumerate}
In particular, post-composition with~$\iota_n$ provides a well-defined functor
\[ \cat{Pe}(\cat{T},\iota_n)\colon\thinspace \cat{Pe}(\cat{T},\Omega_n)\to \Pe, \]
which we call the \textit{covariant change-of-base functor}.
Similarly, we have a \textit{change-of-temporal-resolution functor} defined by pre-composition with the opposite $\tau^\op\colon\thinspace \cat{S}^\op\hookrightarrow \cat{T}^\op$ of~$\tau$:
\[ \cat{Pe}(\tau,\cat{D})\colon\thinspace \Pe\to \cat{Pe}(\cat{S},\cat{D}). \]
The same holds when replacing the category~\(\cat{T}\) by~\(\cat{S}\) and \(\D\) by~\(\Omega_n\), for \(n\geq 0\).

For each $n\geq0$, we consider the following pullback square of categories and functors: 

\begin{equation*}
	\begin{tikzcd}
		\Pe\times_{\cat{Pe}(\cat{S},\cat{D})}\cat{Pe}(\cat{S},\Omega_n) \arrow[r] \arrow[d, "\pi_n", swap]
		\arrow[dr, phantom, "\usebox\pullbacksquare", very near start, color=black]
		& \cat{Pe}(\cat{S},\Omega_n) \arrow[d, "{\cat{Pe}(\cat{S},\iota_n)}"]
		\\ \Pe \arrow[r, "{\cat{Pe}(\tau,\cat{D})}", swap]
		& \cat{Pe}(\cat{S},\cat{D}).
	\end{tikzcd}
\end{equation*}
We define~$\widehat{\Omega}_n$ to be the image of the pullback category under $\pi_n$, so this is the subcategory
\[ \widehat{\Omega}_n\coloneqq \pi_n(\Pe\times_{\cat{Pe}(\cat{S},\cat{D})}\cat{Pe}(\cat{S},\Omega_n))\subseteq\Pe. \]

\begin{lemma}
	For every integer $n\geq0$, $\widehat{\Omega}_n$~is the full subcategory of $\Pe$ whose objects are those persistent narratives $F\colon\thinspace\cat{T}^\op\to\D$ whose value $F([a,b])$ at any time interval $[a,b]$ in~$\cat{S}$ is contained in~$\Omega_n$.	
\end{lemma}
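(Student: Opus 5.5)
We need to identify the image of the pullback category \(\Pe\times_{\cat{Pe}(\cat{S},\cat{D})}\cat{Pe}(\cat{S},\Omega_n)\) under \(\pi_n\). Since the pullback of categories is computed componentwise on objects and morphisms, its objects are pairs \((F,G)\) with \(F\in\Pe\), \(G\in\cat{Pe}(\cat{S},\Omega_n)\) and \(F\circ\tau^\op=\iota_n\circ G\). Its morphisms are pairs \((\phi,\psi)\) with \(\phi\circ\tau^\op=\iota_n\psi\) (whiskering). The plan has three steps: compute the objects of the image, compute its morphisms, and check that the resulting collection is closed under composition.

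\emph{Objects.} If \((F,G)\) lies in the pullback, then for every \([a,b]\in\cat{S}\) we have \(F([a,b])=\iota_n G([a,b])\in\Omega_n\). Conversely, suppose \(F\) is a persistent narrative with \(F([a,b])\in\Omega_n\) for all \([a,b]\in\cat S\). Because \(\Omega_n\) is full (axiom~\ref{axiom:T1}), every restriction map \(F(i)\) with \(i\) in \(\cat S\) is a morphism of \(\Omega_n\). Hence \(F\circ\tau^\op\) factors uniquely as \(\iota_n\circ G\) for a presheaf \(G:\cat S^\op\to\Omega_n\). We must then check that \(G\) is a sheaf. The restriction \(F\circ\tau^\op\) is a \(\D\)-valued sheaf on \(\cat S\), because the change-of-temporal-resolution functor is well defined. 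So for each cover in \(\cat S\), the value \(G([a,b])\) is a pullback in \(\D\) of objects and maps in \(\Omega_n\). Since \(\Omega_n\) is full and contains the vertex, this cone is also a limit cone in \(\Omega_n\): any cone in \(\Omega_n\) is a cone in \(\D\), it induces a unique map in \(\D\), and by fullness that map lies in \(\Omega_n\). Therefore \(G\in\cat{Pe}(\cat S,\Omega_n)\) and \(F=\pi_n(F,G)\). Identifying the two descriptions of the objects of \(\widehat\Omega_n\) is the step I expect to carry the main content, and fullness is the key input there. The preservation of pullbacks by \(\iota_n\) guarantees consistency, but fullness is what makes the limit reflect.

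\emph{Fullness of the image.} Let \(F,F'\) satisfy the condition and let \(\phi:F\to F'\) be any morphism in \(\Pe\). Each component \(\phi_{[a,b]}\) with \([a,b]\in\cat S\) is a morphism between objects of \(\Omega_n\), so it lies in \(\Omega_n\) by fullness. These components assemble into a natural transformation \(\psi:G\to G'\), and naturality is inherited because \(\iota_n\) is faithful. Then \((\phi,\psi)\) is a morphism of the pullback category and \(\pi_n(\phi,\psi)=\phi\).

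\emph{Closure under composition.} Because the image of a functor need not be closed under composition, I would note that every morphism of \(\Pe\) between such objects already lies in the image. The image is therefore exactly the full subcategory on the objects described, and in particular it is a genuine subcategory. This completes the proof.
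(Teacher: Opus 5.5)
Your proof is correct. You unwind the strict pullback of categories and use fullness of $\Omega_n$ from (T1) in three places: to factor $F\circ\tau^{\op}$ through $\iota_n$, to see that the pullback cones of the restricted sheaf remain limit cones in $\Omega_n$, and to lift the components of arbitrary morphisms of $\Pe$. This is the routine verification the lemma calls for. The chapter states the lemma without proof, deferring details to \cite{decomposingtimevaryingdata2026}, so there is no in-text argument to compare against. The only external input you use is the paper's assertion that $\cat{Pe}(\tau,\D)$ is well defined, i.e.\ that restriction to $\cat{S}$ preserves the sheaf condition, and relying on it is legitimate.
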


\begin{theorem}\label{thm:termporalized-sd-category}
	Let again $\cat{T}$ be a finite discrete time category, let $\tau\colon\thinspace\cat{S}\hookrightarrow\cat{T}$ be the inclusion of a sub-join-semilattice  $\cat{S}\subseteq\cat{T}$, and let $(\D,\cG,\Omega)$ be a spined sd-category satisfying~(T\ref{axiom:T1}). Furthermore, suppose that the following conditions hold.
	\begin{enumerate}[leftmargin=0.5in]
		\renewcommand{\labelenumi}{(T\theenumi)}
		\setcounter{enumi}{1}
		\item\label{axiom:T2} The category~$\D$ is cocomplete, that is, it admits all small colimits.
		
		\item\label{axiom:T3} The inclusion functor \(\Pe\hookrightarrow[\cat{T}^\op,\D]\) admits a left adjoint \(\mathfrak{S}\colon[\cat{T}^\op,\D]\to\Pe\) whose restriction to \(\Pe\) is the identity functor.
	\end{enumerate}
	Then the pair $(\Pe,\cG)$ is an sd-category.
\end{theorem}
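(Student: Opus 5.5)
To show that \((\Pe,\cG)\) is an sd-category, two things must hold. First, \(\cG\) must contain the trivial one-vertex graph; this holds because \(\cG\) is the index class of the given sd-category \((\D,\cG)\). Second, every structured decomposition \(d\colon\int J\to\Pe\) with \(J\in\cG\) must admit a colimit in \(\Pe\). The plan is to prove something stronger: \(\Pe\) is cocomplete, so the monomorphism condition and the restriction \(J\in\cG\) play no role. The strategy is the standard one for categories of sheaves: compute the colimit in the presheaf category and then sheafify.

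\textbf{Step 1: colimits in the presheaf category.} By (T\ref{axiom:T2}), \(\D\) is cocomplete. Colimits in the functor category \([\cat{T}^\op,\D]\) are therefore computed pointwise, so \([\cat{T}^\op,\D]\) is cocomplete. In particular, the composite
\[
\int J\xrightarrow{d}\Pe\hookrightarrow[\cat{T}^\op,\D]
\]
has a colimit \(L\), with values \(L([a,b])=\colim_{x\in\int J} d(x)([a,b])\). The finiteness of \(\cat{T}\) is not needed here, although it keeps everything small.

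\textbf{Step 2: sheafify.} Write \(\iota\colon\Pe\hookrightarrow[\cat{T}^\op,\D]\) for the inclusion. By (T\ref{axiom:T3}), \(\iota\) has a left adjoint \(\mathfrak{S}\). Left adjoints preserve all colimits that exist, so \(\mathfrak{S}(L)\) is the colimit of \(\mathfrak{S}\circ\iota\circ d\). Since \(\mathfrak{S}\) restricts to the identity on \(\Pe\), we have \(\mathfrak{S}\circ\iota\circ d=d\). Hence \(\mathfrak{S}(L)\) is a colimit of \(d\) in \(\Pe\).

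To spell this out, set \(\lambda_x=\mathfrak{S}(\ell_x)\), where \(\ell_x\colon d(x)\to L\) are the colimit coprojections. A cocone \(c_x\colon d(x)\to G\) in \(\Pe\) is also a cocone in presheaves, so it factors uniquely through \(L\) via some \(L\to\iota G\). Transposing across the adjunction gives a unique morphism \(\mathfrak{S}L\to G\), and the adjunction bijection preserves the uniqueness.

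\textbf{Main obstacle.} The delicate point is the phrase "restriction to \(\Pe\) is the identity functor." For the argument above we need the unit of \(\mathfrak{S}\dashv\iota\) at a sheaf to be invertible, equivalently the counit \(\mathfrak{S}\iota\Rightarrow\id\) to be an isomorphism, i.e.\ \(\iota\) is fully faithful and \(\Pe\) is a reflective subcategory. This is automatic here because \(\Pe\) is a full subcategory, since its morphisms are just natural transformations. One should nevertheless check explicitly that \(\mathfrak{S}(\iota d(x))=d(x)\) compatibly with the morphisms \(d(k)\), so that the diagram \(\mathfrak{S}\iota d\) really is \(d\) and not merely isomorphic to it.

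With this verified, \(\Pe\) is cocomplete as a reflective subcategory of the cocomplete category \([\cat{T}^\op,\D]\). In particular, every \(J\)-structured decomposition with \(J\in\cG\) has a colimit, so \((\Pe,\cG)\) is an sd-category. Axiom (T\ref{axiom:T1}) and the spine are not used; they matter only for the later spined statement involving \(\widehat{\Omega}_n\).
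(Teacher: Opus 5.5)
Your proof is correct and is essentially the intended argument. The chapter defers the details to Bumpus--Nickel, but (T2), (T3) and the remark that $\mathfrak{S}$ acts as a sheafification functor are tailored to exactly this: the colimit of $\iota\circ d$ exists pointwise in $[\cat{T}^\op,\D]$, and the left adjoint $\mathfrak{S}$ carries it to a colimit of $\mathfrak{S}\iota d=d$, so $\Pe$ is in fact cocomplete and neither the monomorphism condition nor $J\in\cG$ is needed. Your worry about the counit is unnecessary for this abstract step, since $\mathfrak{S}\circ\iota=\id$ already gives $\mathfrak{S}\iota d=d$ on the nose; only in your spelled-out version should you take $\mathfrak{S}$ applied to the factorization $L\to\iota G$ as the mediating map, because its adjoint transpose differs from it by the counit $\varepsilon_G$.
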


Since \(\mathfrak{S}\) is both a left adjoint and a retraction of the inclusion functor, it can be regarded as a sheafification functor. In Example~\ref{eg:ordinary-tree-width}, where \(\D\) is the category~\(\Grph\) of graphs, \(\mathfrak{S}\) is induced by the usual sheafification functor for set-valued presheaves.

By adding one final axiom, we arrive at the notion of a temporalized spined sd-category.

\begin{theorem}\label{thm:temporalization-of-spined-sd-category}
	As before, let $\cat{T}$ be a finite discrete time category, and let $\tau\colon\thinspace\cat{S}\hookrightarrow\cat{T}$ be the inclusion of a sub-join-semilattice $\cat{S}\subseteq\cat{T}$.
	Let $(\D,\cG,\Omega)$ be a spined sd-category that satisfies the axioms (T\ref{axiom:T1}), (T\ref{axiom:T2}) and (T\ref{axiom:T3}) as well as 
	\begin{enumerate}[leftmargin=0.5in]
		\renewcommand{\labelenumi}{(T\theenumi)}
		\setcounter{enumi}{3}
		\item\label{axiom:T4} The category~$\D$ admits pushout squares along monomorphisms and these are also pullback squares. Moreover, monomorphisms are stable under pushouts.
	\end{enumerate}
	Then the subcategories $\widehat{\Omega}_n\subseteq\Pe$ with $n\geq0$ define a spine $\widehat{\Omega}$ on $\Pe$, thereby exhibiting the sd-category $(\Pe,\cG)$ of Theorem~\ref{thm:termporalized-sd-category} as a spined sd-category $(\Pe,\cG,\widehat{\Omega})$.
\end{theorem}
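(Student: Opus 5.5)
We need to verify the three spine axioms for $\widehat{\Omega}=(\widehat{\Omega}_n)_{n\ge 0}$ on $\Pe$, using the preceding Lemma's description: $\widehat{\Omega}_n$ is the full subcategory of persistent narratives $F$ with $F([a,b])\in\Omega_n$ for every $[a,b]\in\cat S$. Monotonicity $\widehat{\Omega}_n\subseteq\widehat{\Omega}_{n+1}$ is then immediate from $\Omega_n\subseteq\Omega_{n+1}$, and fullness is built in. Throughout we use that a morphism of persistent narratives is a monomorphism in $\Pe$ whenever all its components are monomorphisms in $\D$. The converse, that monos in $\Pe$ are componentwise monos, is needed in the third step. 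We will justify it by noting that $\Pe$ is closed under pullbacks in $[\cat T^{op},\D]$, since limits commute with limits. Hence kernel pairs in $\Pe$ are computed pointwise, and a map is mono iff its kernel pair is trivial.

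\textbf{Closure under isomorphism.} If $F\cong G$ in $\Pe$, the components give isomorphisms $F([a,b])\cong G([a,b])$ in $\D$. So if $F\in\bigcup\widehat\Omega$, say $F\in\widehat\Omega_n$, then each $G([a,b])$ with $[a,b]\in\cat S$ is isomorphic to an object of $\Omega_n\subseteq\bigcup\Omega$. Closure of $\bigcup\Omega$ under isomorphism in $\D$ puts it in $\bigcup\Omega$, and closure of $\Omega_n$ under isomorphism within $\bigcup\Omega$ puts it in $\Omega_n$. Thus $G\in\widehat\Omega_n$, and both closure conditions follow. Since $\cat S$ is finite, choosing a common $n$ poses no issue.

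\textbf{Lower monotonicity.} Let $F\in\widehat\Omega_n$, $\widetilde F\in\widehat\Omega_{n-1}$, and $F\rightarrowtail\widetilde F$ a mono in $\Pe$. By the remark above, each component $F([a,b])\rightarrowtail\widetilde F([a,b])$ is a mono in $\D$. For $[a,b]\in\cat S$, the spine axiom for $\Omega$ then gives $F([a,b])\in\Omega_{n-1}$, so $F\in\widehat\Omega_{n-1}$.

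\textbf{Existence of small subobjects (main obstacle).} Given $F\in\Pe$, we must find $n$ and $W\in\widehat\Omega_n$ with a mono $W\rightarrowtail F$. The plan is to build $W$ from small pieces and use (T\ref{axiom:T4}) to keep everything mono. For each $[a,b]\in\cat S$, the spine on $\D$ gives some $W_{ab}\rightarrowtail F([a,b])$ with $W_{ab}\in\Omega_{n_{ab}}$; let $n=\max n_{ab}$, which exists since $\cat S$ is finite.

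A first attempt: define the presheaf $G$ on $\cat T$ as a subpresheaf of $F$ generated by the $W_{ab}$, for instance via left Kan extension to a presheaf mapping into $F$. Then sheafify using $\mathfrak S$ from (T\ref{axiom:T3}). The induced map $\mathfrak S G\to\mathfrak S F=F$ must be shown mono, and the values at intervals in $\cat S$ must be shown to lie in $\Omega_n$. Neither is automatic, and this is where (T\ref{axiom:T4}) (pushouts along monos exist, are pullbacks, and preserve monos) together with (T\ref{axiom:T2}) must enter. These are the conditions making $\D$ behave like an adhesive category, so that gluing monos along monos stays mono.

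If that route proves hard, the fallback is simpler and avoids enlarging at all. Take the \emph{initial} narrative when it exists: $\mathfrak S$ applied to the constant initial presheaf, with the unique map to $F$. In adhesive-like settings the initial object is strict, so maps out of it are monic. Its values are initial objects, which, via the spine axiom applied to $0\rightarrowtail W$, lie in $\Omega_0$ as soon as some mono from the initial object exists. I would first check whether this fallback works under (T\ref{axiom:T2})–(T\ref{axiom:T4}) before attempting the general construction. The remaining verification that $(\Pe,\cG)$ is an sd-category is Theorem~\ref{thm:termporalized-sd-category}, so the spined sd-category conclusion follows.
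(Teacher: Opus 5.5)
Your formal steps are sound. By the lemma, $\widehat{\Omega}_n$ is full and increasing. Monomorphisms in $\Pe$ are exactly the pointwise ones, since pullbacks of sheaves (in particular kernel pairs) are computed pointwise. Closure under isomorphism and the downward-closure axiom then follow as you say.

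The gap is the existence axiom, the only substantive part of the theorem, which you do not prove. In your first route the values on $\cat{S}$ are not controlled. In the generated (or left Kan extended) presheaf, the value at $s\in\cat{S}$ also collects the contributions of all $W_{s'}$ with $s'\supsetneq s$, and nothing places this in $\Omega_n$. For the Kan extension, nothing even makes its map to $F(s)$ monic. Moreover $\mathfrak{S}$ is only a reflector, so it need not preserve monomorphisms or keep values in $\Omega_n$.

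The fallback needs the maps out of the initial object to be monic. That is a strictness-type property which holds in adhesive categories, but you have not derived it from (T1)--(T4). That $0$ lies in some $\Omega_n$ does follow from the printed axiom with $X=0$, because a monomorphism into an initial object is invertible. It does not follow from the third spine axiom as you argue, since that axiom only applies to objects already known to lie in some $\Omega_n$.

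More importantly, you followed the printed existence axiom ($W\rightarrowtail X$), which is very likely reversed. The $\Gamma$-size defined a little later is the least $n$ admitting a monomorphism $X\rightarrowtail W$ with $W\in\Omega_n$. It is defined for every object only if every object embeds into a spine object. That is the convention of \cite{Bumpus-et-al_Structured-Decompositions}, and it is the reading that produces $s_{\widehat{\Gamma}}(X)=\max_{s\in\cat{S}}|X(s)|$ in the reflexive-graph example. It also accounts for (T4): a hypothesis about pushouts along monomorphisms has no natural role in a search for small subobjects.

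So what must be shown is that every $F\in\Pe$ admits a monomorphism $F\rightarrowtail G$ with $G\in\widehat{\Omega}_n$ for some $n$: $F$ has to be enlarged, not shrunk. The chapter defers the proof to \cite{decomposingtimevaryingdata2026}, but a natural strategy is the following.
\begin{enumerate}
\item Choose monomorphisms from the values of $F$ into spine objects.
\item Among the intervals without nontrivial covers, work from larger to smaller. Push out each value $F(t)$ along the monomorphisms already chosen above it, so that the enlarged values acquire compatible restriction maps. The comparison maps from $F$ stay monic because monomorphisms are stable under pushout (T2, T4).
\item Re-embed each resulting pushout into a spine object.
\item Let the values on nontrivially covered intervals be the pullbacks forced by the sheaf condition. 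These stay in $\Omega_N$ by (T1), and their comparison maps are monic because a pointwise monomorphism of cospans induces a monomorphism of pullbacks.
\end{enumerate}
Finiteness of $\cat{T}$ gives a uniform $N$, and your pointwise description of monomorphisms in $\Pe$ then finishes the argument.
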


Theorem~\ref{thm:temporalization-of-spined-sd-category} allows measures of complexity to be lifted from the static to the time-varying setting. A complete proof is given in~\cite{decomposingtimevaryingdata2026}.

\subsubsection{Temporalizing tree-width: examples}
\label{subsubsubsection:temporalizing-tree-width-examples}

In ordinary graph theory, the notion of tree-width is typically defined by means of tree-decompositions, as explained in Paragraph~\ref{paragraph:tree-width}. The category-theoretical framework of spined sd-categories~\cite{Bumpus-et-al_Structured-Decompositions} extends this concept to arbitrary structured decompositions. More precisely, given a spined sd-category \(\Gamma=(\D,\cG,\Omega)\), each object of~\(\D\) is assigned a \textit{\(\Gamma\)-size}~\cite[Definition~2.5.4]{Bumpus-et-al_Structured-Decompositions}. We restrict attention to spined sd-categories in which the subcategories \(\Omega_n\subseteq\D\) are full, in which case the definition simplifies as follows.

\begin{definition}
	The \textit{$\Gamma$-size} of an object $X\in\Ob(\D)$ is the minimum non-negative integer~$n$ for which there exists an object $W\in\Ob(\Omega_n)$ together with a monomorphism $X\rightarrowtail W$ in~$\D$. The resulting map $ s_\Gamma\colon\thinspace \Ob(\D)\to\N_0$, is called the \textit{size function} of~$\Gamma$.
\end{definition}

The size function allows the notions of tree-width for tree-decompositions and graphs to be extended to width notions for structured decompositions and objects of arbitrary spined sd-categories, respectively.

\begin{definition}
	The \textit{width}~$w_\Gamma(d)$ of a structured decomposition $d\colon\thinspace \int J\to\D$ with $J\in\cG$ is the maximum $\Gamma$-size of its bags $d(v)$ minus one:
	\[ w_\Gamma(d)\coloneqq \max_{v\in V(J)} s_\Gamma(d(v)) -1. \]
	Now, the \textit{$\Gamma$-width}~$w_\Gamma(X)$ of an object $X$ of~$\D$ is defined as the minimum $\Gamma$-widths of all structured decompositions $d\colon\thinspace \int J\to\D$ with $J\in\cG$ whose colimit is isomorphic to~$X$:
	\[ w_\Gamma(X)\coloneqq \min_{\substack{d\colon\!\! \int\! J\to\D \textup{ with} \\ J\in\cG,\, \colim d \,\cong\, X}} w_\Gamma(d). \]
\end{definition}

Section~3 of~\cite{Bumpus-et-al_Structured-Decompositions} shows that many classical graph width parameters are recovered as instances of this general notion by choosing appropriate spined sd-categories. This justifies spined sd-categories as a unified framework for generalized tree-widths.

To justify the temporalization of spined sd-categories and the resulting notions of time-varying structured decompositions and \(\Gamma\)-width, we consider three classical graph width parameters. For each, we exhibit a spined sd-category whose temporalization via Theorem~\ref{thm:temporalization-of-spined-sd-category} yields a natural time-varying analogue of the corresponding width notion. Specifically, we consider ordinary tree-width (Definition~\ref{defi:width-tree-width}), complemented tree-width, and the tree independence number.

\begin{example}\label{eg:ordinary-tree-width} (Ordinary tree-width).
	Proposition~3.1.1 in~\cite{Bumpus-et-al_Structured-Decompositions} shows that the category $\Grph$ together with the class of all trees and the full subcategories $\Omega_n\subseteq\Grph$ containing the complete graphs of order at least~$n$ is a spined sd-category whose associated notion of width is the ordinary tree-width of a graph. To apply our temporalization method of Theorem~\ref{thm:temporalization-of-spined-sd-category}, we need to replace $\Grph$ by the category of \textit{reflexive} graphs, meaning those graphs with precisely one loop at each vertex.
	
	\begin{proposition}
		For any finite discrete time category~$\cat{T}$ and any sub-join-semilattice $\cat{S}\subseteq\cat{T}$, $\Gamma$~induces a temporalized spined sd-category $\widehat{\Gamma}\coloneqq ( \cat{Pe}(\cat{T},\Grefl),\{\textup{trees}\},\widehat{\Omega} )$, whose size function takes any persistent narrative $X\colon\thinspace \cat{T}^\op\to\Grefl$ to the maximum order among the graphs~$X(s)$ with $s\in\Ob(\cat{S})$,
		\[ s_{\widehat{\Gamma}}(X) = \max_{s\in\Ob(\cat{S})} \abs{X(s)}. \]
	\end{proposition}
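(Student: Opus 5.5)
The plan is to obtain the proposition as a direct application of Theorem~\ref{thm:temporalization-of-spined-sd-category} to the reflexive-graph version \(\Gamma=(\Grefl,\{\text{trees}\},\Omega)\) of the tree-width spined sd-category. Afterwards we compute the size function of \(\widehat{\Gamma}\) by hand.

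\emph{Step 1: \(\Gamma\) is a spined sd-category.} The argument is that of Proposition~3.1.1 in~\cite{Bumpus-et-al_Structured-Decompositions}, repeated with reflexive loops added. Here \(\Omega_n\) is the full subcategory of complete reflexive graphs whose order is bounded by \(n\).

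\emph{Step 2: the axioms (T1)--(T4) for \(\D=\Grefl\).} \(\Grefl\) is a presheaf category on the reflexive-graph shape, and this gives three of the axioms at once.
\begin{itemize}
\item It is cocomplete, which is (T2).
\item It is adhesive. Hence pushouts along monomorphisms exist, are also pullbacks, and preserve monomorphisms, which is (T4).
\item \(\Pe\) is a category of sheaves valued in a presheaf topos on a small site. The usual set-valued sheafification, applied objectwise in the reflexive-graph shape, gives a left adjoint retraction \(\mathfrak S\), which is (T3).
\end{itemize}
The remaining axiom is (T1). Fullness of \(\Omega_n\) holds by definition, and \(\Grefl\) has all pullbacks. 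What must be checked is that \(\Omega_n\) has pullbacks and that the inclusion \(\iota_n\) preserves them.

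I expect this step to be the main obstacle. A pullback of complete reflexive graphs, computed in \(\Grefl\), is again complete and reflexive, because reflexivity is exactly what makes the complete graph the codiscrete object. Its order, however, can exceed \(n\). I would first pin down the precise indexing of \(\Omega_n\) in~\cite{Bumpus-et-al_Structured-Decompositions}, and then verify closure under those pullbacks that actually arise, namely along monomorphisms or over objects of \(\Omega_n\). If the literal definition fails, I would adjust the indexing so that (T1) holds without changing the induced size function.

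\emph{Step 3: apply the theorem.} Theorem~\ref{thm:temporalization-of-spined-sd-category} then yields the spined sd-category \(\widehat\Gamma\).

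\emph{Step 4: computing the size function.} By the Lemma above, \(s_{\widehat\Gamma}(X)\) is the least \(n\) for which there is a monomorphism \(X\rightarrowtail W\) with \(W(s)\in\Omega_n\) for every \(s\in\cat S\).

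For the lower bound, the inclusion \(\Pe\hookrightarrow[\cat T^{op},\Grefl]\) is a right adjoint by (T3), so it preserves monomorphisms. Monomorphisms of presheaves are pointwise, so each \(X(s)\rightarrowtail W(s)\) is a monomorphism. This forces \(|X(s)|\le|W(s)|\le n\), and hence \(s_{\widehat\Gamma}(X)\ge\max_s|X(s)|\).

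For the upper bound, let \(\mathsf K\colon\Set\to\Grefl\) be the complete (codiscrete) reflexive graph functor. It is right adjoint to the vertex functor \(V\), so it preserves limits. \(V\) also preserves limits, since these are computed pointwise. It follows that \(W:=\mathsf K\circ V\circ X\) satisfies the pullback sheaf condition, and \(|W(s)|=|X(s)|\).

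The unit \(X\to\mathsf K V X\) is the identity on vertices. I would verify that it is injective on edges; this holds for graphs without parallel edges, and I would note this hypothesis if it is needed. This gives the required monomorphism and \(s_{\widehat\Gamma}(X)\le\max_s|X(s)|\). Combining the two bounds proves the formula.
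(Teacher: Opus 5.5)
Your overall route (check (T1)--(T4) for reflexive graphs, invoke Theorem~\ref{thm:temporalization-of-spined-sd-category}, then compute the size pointwise) is the one the chapter intends. Your Step~4 is sound. The lower bound via pointwise monomorphisms and the upper bound via the codiscrete narrative $\mathsf K\circ V\circ X$ are correct. The simplicity hypothesis you flag is genuinely needed, since a value with parallel edges, or with a second loop at a vertex, admits no monomorphism into any complete graph.

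The genuine gap is axiom (T1). You leave it open, and it cannot be closed in either of the ways you suggest.
For $n\ge 2$, the cospan $K_n\to K_1\leftarrow K_n$ lies in $\Omega_n$ (as $K_1\in\Omega_1\subseteq\Omega_n$), and its pullback in $\Grefl$ is $K_{n^2}$. So a pullback in $\Omega_n$ preserved by $\iota_n$ would be an object of $\Omega_n$ with $n^2>n$ vertices.
Restricting to pullbacks ``over objects of $\Omega_n$'' excludes nothing. Closure under pullbacks along monomorphisms does hold, but it is not what (T1) asks, because $\Omega_n$-valued sheaves have arbitrary restriction maps.
Re-indexing fails too. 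Any spine whose size function is the order must put a one-vertex graph $W_1$ and an $n$-vertex graph $W_n$ into $\Omega_n$. Then (T1) forces $W_n\times_{W_1}W_n$, which has $n^2$ vertices, to have size at most $n$.
Hence Step~3 is not justified as written. The fault lies in (T1) being too strong for this example, not in the conclusion (for simple values).

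The repair is to bypass (T1) rather than verify it. In the chapter its visible role is only to make $\cat{Pe}(\cat S,\Omega_n)$ and the change-of-base functor well defined. The statement itself concerns the full subcategories $\widehat\Omega_n$ of narratives whose values on $\cat S$ lie in $\Omega_n$, and these make sense without (T1).
Then $(\cat{Pe}(\cat T,\Grefl),\{\text{trees}\})$ is an sd-category, because $\cat{Pe}(\cat T,\Grefl)$ is reflective in the cocomplete category $[\cat T^{\op},\Grefl]$ by (T2) and (T3). The spine axioms for $\widehat\Omega$ can be checked directly:
\begin{itemize}
\item Closure under isomorphism and monotonicity are immediate.
\item The third axiom follows at each $s\in\cat S$ from the static one, because monomorphisms in $\cat{Pe}(\cat T,\Grefl)$ are pointwise (your right-adjoint argument).
\item The embedding axiom, in the direction $X\rightarrowtail W$ that the size function uses, is your monomorphism $X\rightarrowtail\mathsf K V X$.
\end{itemize}
That monomorphism requires $X(t)$ to be simple for every $t\in\cat T$, not only for $t\in\cat S$, so state the hypothesis in that form.

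Finally, say explicitly that $\Grefl$ is the presheaf topos of reflexive graphs. With the chapter's wording ``precisely one loop at each vertex'', (T4) fails. Push the inclusion of two looped vertices into a looped edge $u\to v$ out along the collapse onto $K_1$. The pushout is $K_1$, and the resulting square is not a pullback.
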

	
	The $\widehat{\Gamma}$-size of a structured decomposition $d\colon\thinspace\int J\to \cat{Pe}(\cat{T},\Grefl)$ for any tree~$J$ can be shown to be
	\[ w_{\widehat{\Gamma}}(d) = \max_{s\in\Ob(\cat{S})} w_{\Gamma}(d_s) \]
	where $d_s\colon\thinspace \int J\to\Grefl$ is obtained from $d$ by evaluation at $s\in\Ob(\cat{S})$. Since $w_{\Gamma}$ is the ordinary tree-width of a graph, we thus recover maximum tree-width, which is a well-known generalization of tree-width for temporal graphs. 
\end{example}

\begin{example} (Complemented tree-width).
	The \textit{complemented tree-width} of a graph~$G$ is the ordinary tree-width of its \textit{complement}~$\overline{G}$, the graph with the same vertex set as~$G$ and with an edge between two vertices if and only if these vertices are non-adjacent in~$G$. Let $\overline{\Grph}$ be the category whose objects are the undirected graphs and whose morphisms from $G$ to~$H$ are graph morphisms of complements $\overline{G}\to\overline{H}$. By \cite[Proposition 3.3.4]{Bumpus-et-al_Structured-Decompositions}, $\Gamma\coloneqq (\overline{\Grph}, \{\textup{trees}\},\Omega)$ is a spined sd-category where $\Omega_n$ is the full subcategory of $\overline{\Grph}$ whose objects are all edgeless graphs of order at most~$n$. Its associated width notion is complemented tree-width.
	
	\begin{proposition}
		For any sub-join-semilattice~$\cat{S}$ of a finite discrete time category~$\cat{T}$, $\Gamma$~induces a time-varying spined sd-category $( \cat{Pe}(\cat{T},\overline{\Grph}),\{\textup{trees}\},\widehat{\Omega} )$. Its width notion is given by the maximum complemented tree-width over~$\cat{S}$.
	\end{proposition}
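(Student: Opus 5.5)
The plan is to apply Theorem~\ref{thm:temporalization-of-spined-sd-category} to $\Gamma=(\overline{\Grph},\{\textup{trees}\},\Omega)$ and then identify the resulting width exactly as in Example~\ref{eg:ordinary-tree-width}. The key device is the complement construction $G\mapsto\overline{G}$. It is an isomorphism of categories between $\overline{\Grph}$ and the category of undirected (loopless) graphs. Every axiom can therefore be transported through this isomorphism, where the verification reduces to facts about ordinary undirected graphs.

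\textbf{Verifying the axioms.} I would take them in order.
\begin{enumerate}
\item For (T\ref{axiom:T1}), pullbacks in $\overline{\Grph}$ correspond to pullbacks of complements. Each $\Omega_n$ is full by definition. A pullback of edgeless graphs of order at most $n$ in $\overline{\Grph}$ is the complement of a pullback of complete graphs. Such a pullback is a subobject of a product and again has at most $n$ vertices. So the inclusions preserve pullbacks, provided the pullback of complete graphs is again complete. I would check this directly: for graphs without loops, the edge set of a pullback is the set-theoretic pullback of edge sets.
\item For (T\ref{axiom:T2}), cocompleteness is transported from the category of undirected graphs, where colimits are computed on vertices and edges.
\item For (T\ref{axiom:T3}), the sheaf condition for the Johnstone coverage on a finite discrete $\cat T$ is a finite pullback condition. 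Undirected graphs form a locally presentable category (a reflective subcategory of a presheaf topos). Sheaves are then a reflective subcategory of presheaves, with a sheafification $\mathfrak S$ obtained by the usual small-object or plus-construction argument, and $\mathfrak S$ is the identity on sheaves.
\item For (T\ref{axiom:T4}), pushouts along monomorphisms in the transported category are computed by gluing vertex and edge sets. Since monos are injective on vertices, these pushouts are pullbacks and monos are stable under them, as in the adhesive setting.
\end{enumerate}

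\textbf{Identifying the width.} Once the axioms hold, Theorem~\ref{thm:temporalization-of-spined-sd-category} provides the spined sd-category $(\cat{Pe}(\cat T,\overline{\Grph}),\{\textup{trees}\},\widehat\Omega)$. Mimicking the ordinary tree-width proposition, I would first show that the size function is $s_{\widehat\Gamma}(X)=\max_{s\in\cat S}s_\Gamma(X(s))$. One inequality comes from evaluating a monomorphism $X\rightarrowtail W$ with $W\in\widehat\Omega_n$ at each $s$. For the other, I would construct $W$ by sheafifying a presheaf that embeds each $X(s)$ into an object of $\Omega_n$; (T\ref{axiom:T4}) is what keeps this map a monomorphism. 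Next, monomorphisms in $\Pe$ are pointwise monomorphisms, and colimits are sheafified pointwise colimits. This gives $w_{\widehat\Gamma}(d)=\max_s w_\Gamma(d_s)$, and minimizing over decompositions yields the maximum complemented tree-width over $\cat S$.

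\textbf{Main obstacle.} I expect the hardest step to be the final minimization. A separate optimal decomposition for each snapshot need not assemble into a single tree-shaped decomposition of narratives. For this I would rely on the analogous argument of~\cite{decomposingtimevaryingdata2026} for ordinary tree-width, transported along the complement isomorphism.
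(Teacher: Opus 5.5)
Your overall route (apply Theorem~\ref{thm:temporalization-of-spined-sd-category} to $\Gamma$, then read off the size function as in Example~\ref{eg:ordinary-tree-width}) is the one the paper intends; the paper gives no further argument. Transporting along complementation is also legitimate. The trouble is that for (T2)--(T4) you invoke properties of the adhesive presheaf category $\Grph$, whereas complementation forces you into simple graphs, and you have taken them loopless.

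That category is not cocomplete. The pushout of $K_1\leftarrow\overline{K_2}\rightarrowtail K_2$, which must identify the two endpoints of an edge, admits no cocone at all. This refutes (T2) and the existence half of (T4). It also defeats your (T3) argument, since locally presentable categories and reflective subcategories of presheaf toposes are cocomplete.

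Where pushouts along monomorphisms do exist, they need not be pullbacks, with or without loops. Gluing two copies of $K_2$ along $\overline{K_2}$ yields $K_2$ because the parallel edges merge, while the pullback of $K_2\to K_2\leftarrow K_2$ is $K_2\neq\overline{K_2}$. (Incidentally, the bound $\le n$ in your (T1) check rests on maps between loopless complete graphs being injective; being a subobject of a product only gives $n^2$. The bound fails in reflexive graphs, where $K_n\times K_n$ over the terminal $K_1$ has $n^2$ vertices.)

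The same defect sinks your construction of $W$ for the size formula. On $\cat{T}_1$ with $\cat{S}=\cat{T}_1$, let (after complementing) $X([0,1])$ consist of two isolated vertices restricting to the single vertex $X([0,0])$. A monomorphism into a narrative $W$ with complete loopless values on $\cat{S}$ sends them to adjacent, hence distinct, vertices of $W([0,1])$. Their restrictions must nonetheless coincide by naturality, so $X$ embeds into no object of any $\widehat\Omega_n$.

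Second, the minimization you flag as the main obstacle cannot be done, because the object-level identity $w_{\widehat\Gamma}(X)=\max_{s\in\cat{S}}w_\Gamma(X(s))$ is false. After complementing, take $\cat{T}=\cat{S}=\cat{T}_1$ and let $H$ be the edgeless graph on the vertices of the $m\times m$ grid. Let $G_0\hookleftarrow H\hookrightarrow G_1$ be the identity-on-vertices inclusions into the row-wise and the column-wise snake-order Hamiltonian paths of the grid, so that every snapshot has tree-width at most $1$. Now take any tree-shaped decomposition with bags $A_v\leftarrow B_v\rightarrow A'_v$ and width $w$. Each vertex lies in some $B_v$, hence in $A_v\cap A'_v$. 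So $(A_v\cup A'_v)_v$ is a tree-decomposition of $G_0\cup G_1$, which is the whole grid, and thus $m\le 2w+1$.

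The statement should be read at the level of decompositions, as the paper's ordinary tree-width example makes explicit: $w_{\widehat\Gamma}(d)=\max_{s\in\cat{S}}w_\Gamma(d_s)$. This follows from the size formula by exchanging two maxima and needs no assembly of snapshot decompositions. The induced width of a narrative is then a genuinely temporal parameter. It is bounded below by the maximum complemented tree-width over $\cat{S}$, but not in general equal to it.
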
	
\end{example}

Modifying the spine~$\Omega$ on~$\overline{\Grph}$ appropriately will give rise to the tree independence number of a graph, discussed in the next example.

\begin{example} (Tree independence number).
	The \textit{independence number} of a graph is the maximum number of pairwise non-adjacent vertices. The \textit{tree independence number} of a tree decomposition is the maximum independence number of its bags minus one, and the \textit{tree independence number} of a graph is the minimum one among all its tree decompositions. 
	For any $n\geq0$, let $\Omega_n^\alpha$ denote the full subcategory of $\overline{\Grph}$ whose objects are the graphs of independence number $\alpha(G)\leq n$. Then by \cite[Proposition 3.4.5]{Bumpus-et-al_Structured-Decompositions}, they define a spine $\Omega^\alpha$ on $\overline{\Grph}$ making the triple $(\overline{\Grph},\{\textup{trees}\},\Omega^\alpha)$ to a spined sd-category, whose width notion is the tree independence number.
	
	\begin{proposition}
		Given $\cat{T}$ and $\cat{S}$ as usual, the above spined sd-category induces a temporalization $( \cat{Pe}(\cat{T}, \overline{\Grph}),\{\textup{trees}\},\widehat{\Omega}^\alpha )$. Its size function is given by the maximum independence number of graphs, so its associated width notion recovers the maximum tree independence number over~$\cat{S}$.
	\end{proposition}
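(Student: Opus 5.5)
The plan is to apply Theorem~\ref{thm:temporalization-of-spined-sd-category} to the spined sd-category $(\overline{\Grph},\{\textup{trees}\},\Omega^\alpha)$, and then compute the resulting size and width functions directly.

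\textbf{Axioms.} First, I will check axioms (T\ref{axiom:T1})--(T\ref{axiom:T4}) for $\overline{\Grph}$. The key point is that taking complements identifies $\overline{\Grph}$ with the category of undirected (simple) graphs. Under this identification, pullbacks, pushouts along monomorphisms, and sheafification can all be computed after complementing. Those axioms were already needed for the complemented tree-width example, so I expect to reuse that verification verbatim. The only new input is the behaviour of the spine $\Omega^\alpha$.

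\textbf{Fullness and pullbacks of $\Omega^\alpha_n$.} Fullness holds by definition. For pullbacks, I will argue that $\Omega^\alpha_n$ is closed under the pullbacks computed in $\overline{\Grph}$. A pullback of $G\to K\leftarrow H$ is a subobject of a product-like construction, and a monomorphism into $G$ (in the complemented sense) does not increase the independence number. I will use the spine property for this: monomorphisms into objects of smaller spine level force the source down. Once closure holds, the inclusion $\iota_n$ preserves pullbacks.

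\textbf{Obstacle.} I expect this pullback-closure step to be the main obstacle. The pullback vertex set in $\overline{\Grph}$ is a fibred product, and its independent sets must be controlled. My first attempt will be to show that an independent set in the pullback projects injectively, or at least in a way that bounds its size, onto an independent set of one leg. If that fails, I will restrict to the monomorphic spans that actually occur in sheaf conditions, where the projection is injective.

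\textbf{Size and width.} With the spine $\widehat{\Omega}^\alpha$ in hand, the size of a narrative $X$ is the least $n$ such that $X$ embeds into a narrative whose values on $\cat S$ lie in $\Omega^\alpha_n$. Taking $X$ itself gives the upper bound $\max_{s\in\Ob(\cat S)}\alpha(X(s))$. Conversely, a monomorphism of narratives is pointwise monic, and monos do not decrease $\alpha$, which gives the lower bound. So
\[ s(X)=\max_{s\in\Ob(\cat{S})}\alpha(X(s)). \]
For a tree-shaped decomposition $d$, colimits in $\Pe$ of monic tree diagrams are computed pointwise, as in the ordinary tree-width example: via (T\ref{axiom:T4}), sheafification does not change them. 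Evaluating at each $s$ therefore gives
\[ w(d)=\max_s w(d_s). \]
Minimizing over $d$ then yields the maximum tree independence number over $\cat S$, in the same way as in Example~\ref{eg:ordinary-tree-width}.
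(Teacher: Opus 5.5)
The gaps are in your final step and in the pullback-closure step for (T1).

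\textbf{What works.} Your overall route is the one the paper intends: apply Theorem~\ref{thm:temporalization-of-spined-sd-category} to $(\overline{\Grph},\{\textup{trees}\},\Omega^\alpha)$. Your size computation is fine, since monos in $\cat{Pe}$ are pointwise by (T3) and monos do not decrease $\alpha$. The identity $w(d)=\max_{s\in\cat{S}}w(d_s)$ is also correct, but it is only an exchange of two maxima (over bags and over $s$) and needs no colimit computation. It is the identity the paper states explicitly in Example~\ref{eg:ordinary-tree-width}.

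\textbf{The final step does not go through.} Here you pass from decompositions to objects, and this fails. Colimits in $\cat{Pe}(\cat{T},\overline{\Grph})$ of tree-shaped monic diagrams are \emph{not} computed pointwise. (T4) concerns pushouts in $\D$, whereas the sheaf condition at $[a,b]$ is a pullback over $[p,p]$, and pointwise pushouts of sheaves need not satisfy it. Here is a counterexample on $\cat{T}_2$:
\begin{itemize}
\item bag $A$ is a single vertex on $[0,0],[0,1],[1,1]$ and empty elsewhere;
\item bag $B$ is a single vertex on $[1,1],[1,2],[2,2]$ and empty elsewhere;
\item the adhesion is a single vertex at $[1,1]$ only.
\end{itemize}
All three are sheaves, and the pointwise pushout is empty at $[0,2]$. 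However, the colimit in $\cat{Pe}$ is the constant one-vertex narrative $X$: any cocone into a sheaf $W$ extends uniquely to $[0,2]$, because $W([0,2])$ is a pullback over $W([1,1])$. Take $\cat{S}=\{[0,2]\}$. This decomposition then has width $-1$, while $X([0,2])$ has static width $0$. So $d_s$ need not decompose $X(s)$, and ``minimizing over $d$'' does not give the maximum over $s$ of the static invariant.

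Even where evaluation does commute with colimits (for instance on $\cat{T}_1$, where every presheaf is a sheaf), you would only get $w(X)\ge\max_s w(X(s))$. A single decomposition must use one tree for all $s$, compatibly with the restriction maps, so $\min_d\max_s$ can exceed $\max_s\min_d$. Already for ordinary tree-width, take the span of two Hamiltonian paths on the $n\times n$ grid, whose union is the grid, over their common subgraph. Merging the two time-slices of any decomposition yields a tree-decomposition of the grid, so $w(X)\ge(n-1)/2$, although every $X(s)$ is a forest. Your argument supports the decomposition-level statement only; the object-level identification you assert is false in general.

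\textbf{The pullback-closure step for (T1) is not yet an argument.} The mechanism you state does not apply. A leg $P\to G$ of a pullback is monic when the opposite leg is, but not in general, and the spine axiom only constrains sources of monomorphisms. Embedding $P$ in $G\times H$ just moves the problem to bounding $\alpha(G\times H)$.

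The fallback is not available either. (T1) asks $\iota_n$ to preserve all pullbacks, and the cospans $F([a,p])\to F([p,p])\leftarrow F([p,b])$ in the sheaf condition are arbitrary restriction maps; see the non-injective map in Figure~\ref{fig:counit_example}.

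Your first attempt (projecting independent sets injectively) is the right idea, but whether it works depends on the convention for $\overline{\Grph}$:
\begin{itemize}
\item If complements are loopless, an independent set of $P$ is a clique in the pulled-back complement. Its first coordinates are then pairwise distinct and non-adjacent in $G$, so $\alpha(P)\le\min\{\alpha(G),\alpha(H)\}$.
\item If morphisms may collapse edges of the complement, as coequalizers and hence (T2) require, closure fails. Over the one-vertex graph, the pullback of two edgeless $2$-vertex graphs is edgeless on $4$ vertices.
\end{itemize}
You need to fix one convention and verify (T1)--(T4) in it, rather than importing (T2)--(T4) from the complemented example and checking (T1) separately.
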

\end{example}

\subsection{Vignette 3: Temporal Cellular Sheaves: Modelling Multi-agent Systems with Switching Topologies}

This vignette outlines an ongoing research program developing category-theoretic and sheaf-theoretic methods for control, with current applications to control barrier functions and multi-agent systems~\cite{CurrierLealEtAl2026,temporal-cellular-sheaves2026}.  Building on these ideas, we focus here on multi-agent systems with switching communication topologies~\cite{temporal-cellular-sheaves2026}. Such systems arise in applications ranging from robotic swarms and autonomous vehicles to sensor networks and distributed optimization, where the communication graph changes over time due to mobility, communication failures, or environmental constraints~\cite{MesbahiEgerstedt2010,OlfatiSaber2007,Bullo2009,Moreau2005}.

A common approach represents the communication topology by a graph whose vertices correspond to agents and whose edges represent communication links~\cite{MesbahiEgerstedt2010,Bullo2009}. This graph captures the connectivity of the network and provides the underlying structure for many distributed control algorithms. The communication graph, however, captures only who communicates with whom. The dynamics of the agents, the information they exchange, the sensing mechanisms, and the transformations performed along communication links must be modeled separately. Because these additional structures are usually introduced in an application-specific manner, it is difficult to develop a unified mathematical framework encompassing heterogeneous multi-agent systems.

\subsubsection{Cellular sheaves for multi-agent systems}

Cellular sheaves offer a mathematical model for the additional structures required in heterogeneous multi-agent systems~\cite{hanks2025heterogeneousmultiagent,distributed-mas-coordination-2026}. They assign (potentially different) vector spaces to agents and communication links, together with linear maps describing how information is measured, communicated, or transformed locally. This construction naturally accommodates heterogeneous state spaces, sensing mechanisms, communication protocols, and local information-processing rules. In this way, the communication graph specifies \emph{who communicates with whom}, while the cellular sheaf specifies \emph{what information is associated with each agent and communication link, and how that information is transformed}.

Categorically, a cellular sheaf is simply a functor
\[
\mathcal G:\incG\longrightarrow\Vect,
\]
where \(\incG\) denotes the incidence category of the communication (directed) graph \(G\). Its objects are the faces of \(G\) (vertices and edges), and its morphisms are precisely the incidence relations: for every edge \(e=(u,v)\), there are morphisms \(u\xrightarrow{t_e} e\) and \(v\xrightarrow{h_e} e\). Figure~\ref{fig:cellular-sheaf-components} illustrates this construction.

\begin{figure}[htbp]
\centering
\includegraphics[width=\columnwidth]{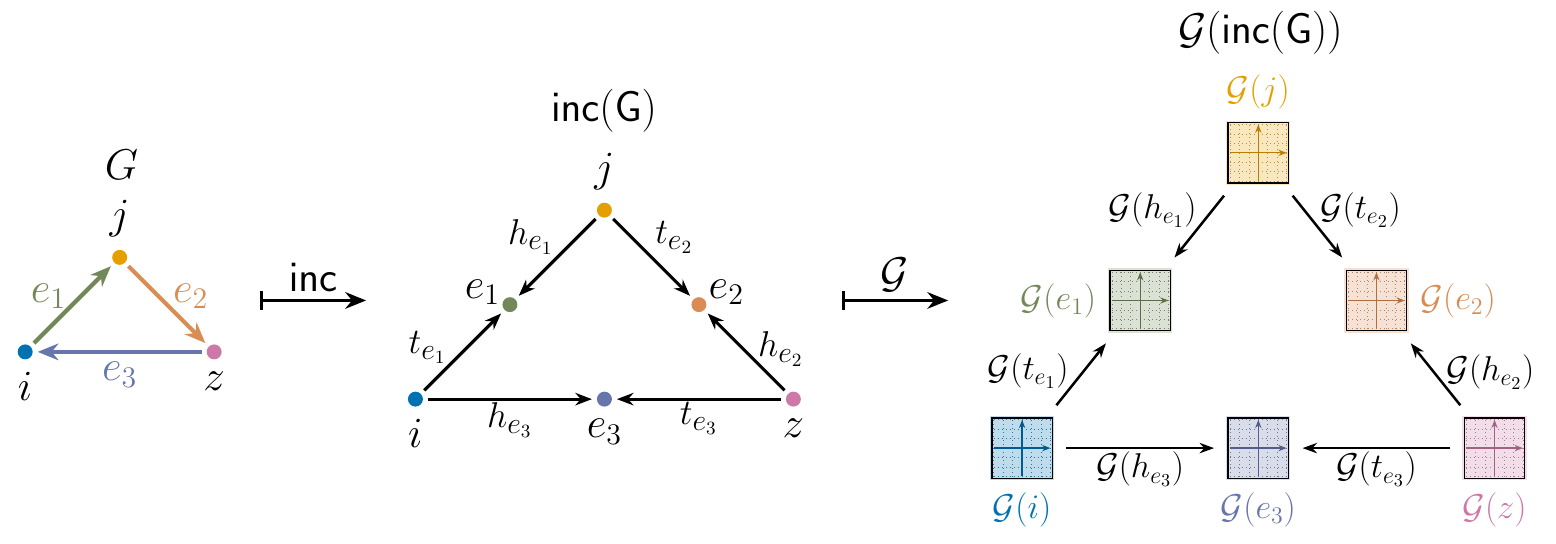}
\caption{From left to right: the directed communication graph \(G\), whose vertices represent agents and whose edges \(e_1, e_2\) and \(e_3\) represent directed communication links; its incidence category \(\incG\), whose objects are the faces of \(G\), namely its vertices and edges, and whose morphisms record the incidence relations; and the diagram determined by the cellular sheaf \(\mathcal G:\incG\to\Vect\). In this example, \(\mathcal G\) assigns the vector space \(\mathbb R^2\) to every object of \(\incG\), equivalently to every face of \(G\), together with linear maps associated with the incidence morphisms. These maps encode the local sensing, communication, and information-processing relations of the network.}

\label{fig:cellular-sheaf-components}
\end{figure}

\begin{remark}[Cellular sheaves as structured co-decompositions]
Notice that cellular sheaves are instances of the structured co-decompositions considered in Section~\ref{subsec:decompositions} since the incidence category \(\incG\) of a graph $G$ is  isomorphic to $(\int G)^{op}$. Thus, a cellular sheaf \(\mathcal G\colon\incG\to\Vect\) is equivalently a contravariant functor from $\hat{\mathcal G} \colon (\int G)^{op} \to \Vect$, and hence a $G$-shaped structured co-decomposition of vector spaces. Equivalently, after taking opposites, it may be regarded as a functor \(\mathcal G^{op}\colon \int G \to \Vect^{op}\). When the cellular sheaf maps are epimorphisms in $\Vect$, they become monomorphisms in $\Vect^{op}$, so that $\mathcal G^{op}$ is a structured decomposition in the sense used earlier in this chapter. To match the terminology of control theory, multi-agent systems, and the cellular-sheaf literature, we refer to these objects as cellular sheaves throughout this section.
\end{remark}

The functorial viewpoint is essential because it guarantees that the local models assigned to agents and communication links are compatible with the communication topology. Rather than specifying local state spaces, sensing maps, and communication rules independently, functoriality requires them to respect the incidence relations of the graph. Consequently, the resulting cellular sheaf represents a globally consistent information-processing architecture for the multi-agent system.

\subsubsection{Category of cellular sheaves}

To model systems whose communication topology changes over time, we must also specify how one cellular sheaf transforms into another. This requires a suitable notion of morphism between cellular sheaves, capturing both the evolution of the underlying communication graph and the induced transformation of the associated local state data. This leads naturally to the category of cellular sheaves.

\begin{definition}[Category of cellular sheaves]\label{def:cellsh}
The category \(\CellSh\) is defined as follows.
\begin{itemize}
    \item Its objects are cellular sheaves \(\mathcal G:\incG\longrightarrow\Vect,\) where \(G\) is a finite directed graph.

    \item A morphism \((\mathcal T,\alpha):\mathcal G\to\mathcal H\) consists of a functor \(\mathcal T:\incG\to\incH\) and a natural transformation \(\alpha:\mathcal G\Rightarrow\mathcal H\circ\mathcal T\), as shown below.
% https://q.uiver.app/#q=WzAsMyxbMCwwLCJcXGluY0ciXSxbMiwwLCJcXGluY0giXSxbMSwyLCJcXFZlY3QiXSxbMCwxLCJ7XFxtYXRoY2FsIFR9Il0sWzAsMiwie1xcbWF0aGNhbCBHfSIsMl0sWzEsMiwie1xcbWF0aGNhbCBIfSJdLFs0LDUsIlxcYWxwaGEiLDAseyJvZmZzZXQiOi0yfV1d
\[\begin{tikzcd}
	\incG && \incH \\
	\\
	& \Vect
	\arrow["{{\mathcal T}}", from=1-1, to=1-3]
	\arrow[""{name=0, anchor=center, inner sep=0}, "{{\mathcal G}}"', from=1-1, to=3-2]
	\arrow[""{name=1, anchor=center, inner sep=0}, "{{\mathcal H}}", from=1-3, to=3-2]
	\arrow["\alpha", shift left=2, Rightarrow, from=0, to=1]
\end{tikzcd}\]
\end{itemize}
\end{definition}
We first examine the functor \(\mathcal T:\incG\to\incH\) appearing in a morphism \((\mathcal T,\alpha):\mathcal G\to\mathcal H\). Recall that the objects of \(\incG\) are the agents and communication links of \(G\), while its nonidentity morphisms encode incidences between them. Thus, \(\mathcal T\) specifies how the communication topology \(G\) is represented inside, or transformed into, the communication topology \(H\). Because \(\mathcal T\) is functorial, it preserves incidence relations: whenever an agent is incident to a communication link in \(G\), its image must be incident to the image of that link in \(H\).

The following examples illustrate increasingly sophisticated ways in which a functor between incidence categories may arise in applications, beginning with subsystem inclusions, progressing to a coarse-graining of agents into teams, and culminating in switching communication topologies.

\begin{example}[A subsystem of a system]
Let \(G\) be a subgraph of \(H\). The inclusions of vertices and edges determine a functor \(\mathcal I:\incG\hookrightarrow\incH\) that sends every agent and communication link of \(G\) to the corresponding cell of \(H\). Functoriality follows because incidences in \(G\) remain incidences in \(H\).
\end{example}

\begin{example}[Aggregation of agents into teams]
Hierarchical control often requires reasoning simultaneously at different levels of abstraction~\cite{SCATTOLINI2009723,BaiGeorgeChakrabortty2020,9482931}. At the lower level, individual agents communicate through a detailed network, while at the higher level, groups of agents are treated as teams interacting through a coarser communication topology. A functor between the corresponding incidence categories relates these two descriptions.

Consider the path graph \(G=P^4=1\!-\!2\!-\!3\!-\!4\), representing four communicating agents. Suppose that agents \(1\) and \(2\) form team \(A\), while agents \(3\) and \(4\) form team \(B\). The team-level communication graph \(H\) therefore consists of two vertices, \(A\) and \(B\), connected by a single edge \(AB\). The aggregation is described by a functor
\[
\mathcal T:\incG\longrightarrow\incH
\]
defined by
\[
\mathcal T(1)=\mathcal T(2)=A,\qquad
\mathcal T(3)=\mathcal T(4)=B,
\]
and
\[
\mathcal T(12)=A,\qquad
\mathcal T(23)=AB,\qquad
\mathcal T(34)=B.
\]

\begingroup
% Color-blind-friendly colors from the Okabe--Ito palette
\definecolor{teamA}{RGB}{0,114,178}       % Blue
\definecolor{teamB}{RGB}{213,94,0}        % Vermilion
\definecolor{interteam}{RGB}{0,158,115}   % Bluish green

\begin{tikzcd}[
    column sep=1.8em,
    row sep=2.2em,
    every arrow/.append style={line width=0.6pt},
    execute at end picture={
        \node at
        ($(functorleft.center)!0.5!(functorright.center)-(0,3mm)$)
        {\Large$\xrightarrow{\;\;\mathcal T\;\;}$};
    }
]
&
{\color{teamA}12}
&&
{\color{interteam}23}
&&
{\color{teamB}34}
&&
|[alias=functorleft]| {}
&
|[alias=functorright]| {}
&&
{\color{interteam}AB}
&
\\
{\color{teamA}1}
&&
{\color{teamA}2}
&&
{\color{teamB}3}
&&
{\color{teamB}4}
&&&
{\color{teamA}A}
&&
{\color{teamB}B}
%
% Incidence morphisms of inc(P^4)
\arrow[teamA, from=1-2, to=2-1]
\arrow[teamA, from=1-2, to=2-3]
\arrow[interteam, from=1-4, to=2-3]
\arrow[interteam, from=1-4, to=2-5]
\arrow[teamB, from=1-6, to=2-5]
\arrow[teamB, from=1-6, to=2-7]
%
% Identity morphisms of inc(P^4)
\arrow[teamA, loop above, from=1-2, to=1-2]
\arrow[interteam, loop above, from=1-4, to=1-4]
\arrow[teamB, loop above, from=1-6, to=1-6]
\arrow[teamA, loop below, from=2-1, to=2-1]
\arrow[teamA, loop below, from=2-3, to=2-3]
\arrow[teamB, loop below, from=2-5, to=2-5]
\arrow[teamB, loop below, from=2-7, to=2-7]
%
% Incidence morphisms of inc(A-B)
\arrow[interteam, from=1-11, to=2-10]
\arrow[interteam, from=1-11, to=2-12]
%
% Identity morphisms of inc(A-B)
\arrow[interteam, loop above, from=1-11, to=1-11]
\arrow[teamA, loop below, from=2-10, to=2-10]
\arrow[teamB, loop below, from=2-12, to=2-12]
\end{tikzcd}
\endgroup

The color coding displays the aggregation. The objects \(1\), \(2\), and \(12\) share the color of team \(A\), while \(3\), \(4\), and \(34\) share the color of team \(B\). Accordingly, the intra-team links \(12\) and \(34\) are mapped to the team objects \(A\) and \(B\), respectively. Their incidence morphisms are therefore mapped to the corresponding identity morphisms, shown as loops in the team-level category. This expresses that communication within each team is black-boxed in the coarse description.

By contrast, the link \(23\) and its incidence morphisms share the color of the edge \(AB\) and its incidence morphisms. Thus, the communication between the two teams remains visible as a nontrivial edge at the team level.
\end{example}

\begin{example}[Switching topologies]\label{ex:switching-topologies}
Consider a multi-agent system whose communication topology changes over time. Let \(G_{t_i}\) and \(G_{t_k}\) denote the communication graphs at two time instants \(t_i<t_k\). Both may be viewed as communication subgraphs of a larger graph \(G\) describing every communication link that appears during the entire time horizon. The corresponding inclusion functors
\[
\inc{G_{t_i}}
\hookrightarrow
\incG
\hookleftarrow
\inc{G_{t_k}}.
\]
admit a pullback
\[
\inc{G_{t_i}}
\hookleftarrow
\inc{G_{t_i}}\times_{\incG}\inc{G_{t_k}}
\hookrightarrow
\inc{G_{t_k}},
\]
whose apex identifies precisely the agents and communication links common to both topologies. The resulting span therefore represents the communication subsystem that persists across the transition from \(G_{t_i}\) to \(G_{t_k}\).
\end{example}

The second component of a morphism of cellular sheaves is given by a natural transformation \(\alpha:\mathcal G\Rightarrow\mathcal H\circ\mathcal T\). For every incidence morphism \(m:x\to y\) in \(\incG\), naturality requires the commutativity of
\[
\begin{tikzcd}
\mathcal G(x) \arrow[r,"\mathcal G(m)"] \arrow[d,"\alpha_x"']
& \mathcal G(y) \arrow[d,"\alpha_y"]\\
\mathcal H(\mathcal T(x)) \arrow[r,"\mathcal H(\mathcal T(m))"']
& \mathcal H(\mathcal T(y)).
\end{tikzcd}
\]

Thus, performing the local computation prescribed by \(\mathcal G\) and then communicating the resulting information via \(\alpha\) to \(\mathcal H\) yields the same result as first communicating the local data from \(\mathcal G\) to \(\mathcal H\) and then perform the corresponding local computation in \(\mathcal H\). In this sense, a morphism \((\mathcal T,\alpha):\mathcal G\to\mathcal H\) makes the local information-processing rules of the two systems compatible with the information processing protocol within each system.

\begin{example}[Persistent information across a topology change]
Consider the switching-topology setting of Example~\ref{ex:switching-topologies}. We use this situation to illustrate the form of morphisms in the category of cellular sheaves. Let
% https://q.uiver.app/#q=WzAsNSxbMCwxLCJcXGluY3tHX3t0X2l9Xnt0X2l9fSJdLFs0LDEsIlxcaW5je0dfe3Rfa31ee3Rfa319Il0sWzIsMCwiXFxpbmN7R197dF9pfV57dF9rfX0iXSxbNCwyXSxbMiwzLCJcXFZlY3QiXSxbMiwwLCJcXG1hdGhjYWwgSV9pIiwyXSxbMCw0LCJcXG1hdGhjYWwgR197dF9pfV57dF9pfSIsMl0sWzIsMSwiXFxtYXRoY2FsIElfayIsMCx7ImxhYmVsX3Bvc2l0aW9uIjo3MH1dLFsxLDQsIlxcbWF0aGNhbCBHX3t0X2t9Xnt0X2t9Il0sWzIsNCwiXFxtYXRoY2FsIEdfe3RfaX1ee3Rfa30iLDIseyJsYWJlbF9wb3NpdGlvbiI6MzB9XSxbOSw2LCJcXGFscGhhX2kiLDAseyJzaG9ydGVuIjp7InNvdXJjZSI6MjAsInRhcmdldCI6MjB9fV0sWzksOCwiXFxhbHBoYV9rIiwyLHsic2hvcnRlbiI6eyJzb3VyY2UiOjIwLCJ0YXJnZXQiOjIwfX1dXQ==
\[\begin{tikzcd}[column sep=small]
	&& {\inc{G_{t_i}^{t_k}}} && \\
	{\inc{G_{t_i}^{t_i}}} &&&& {\inc{G_{t_k}^{t_k}}} \\
	&&&& {} \\
	&& \Vect
	\arrow["{\mathcal I_i}"', from=1-3, to=2-1]
	\arrow["{\mathcal I_k}"{pos=0.7}, from=1-3, to=2-5]
	\arrow[""{name=0, anchor=center, inner sep=0}, "{\mathcal G_{t_i}^{t_k}}"'{pos=0.3}, from=1-3, to=4-3]
	\arrow[""{name=1, anchor=center, inner sep=0}, "{\mathcal G_{t_i}^{t_i}}"', from=2-1, to=4-3]
	\arrow[""{name=2, anchor=center, inner sep=0}, "{\mathcal G_{t_k}^{t_k}}", from=2-5, to=4-3]
	\arrow["{\alpha_i}", between={0.2}{0.8}, Rightarrow, from=0, to=1]
	\arrow["{\alpha_k}"', between={0.2}{0.8}, Rightarrow, from=0, to=2]
\end{tikzcd}\]
where \(\mathcal G_{t_i}^{t_i}\), \(\mathcal G_{t_i}^{t_k}\), and \(\mathcal G_{t_k}^{t_k}\) are cellular sheaves on the communication topologies \(\inc{G_{t_i}}\), \(\inc{G_{t_i}^{t_k}}\), and \(\inc{G_{t_k}}\), respectively. Since the communication topologies and the functors \(\mathcal I_i\) and \(\mathcal I_k\) have already been specified, defining morphisms of cellular sheaves amounts to specifying natural transformations
\[
\alpha_i:
\mathcal G_{t_i}^{t_k}
\Rightarrow
\mathcal G_{t_i}^{t_i}\circ\mathcal I_i,
\qquad
\alpha_k:
\mathcal G_{t_i}^{t_k}
\Rightarrow
\mathcal G_{t_k}^{t_k}\circ\mathcal I_k,
\]
together with these functors, define the morphisms of cellular sheaves
\[
\mathcal G_{t_i}^{t_i}
\xleftarrow{(\mathcal I_i,\alpha_i)}
\mathcal G_{t_i}^{t_k}
\xrightarrow{(\mathcal I_k,\alpha_k)}
\mathcal G_{t_k}^{t_k}.
\]

The natural transformations \(\alpha_i\) and \(\alpha_k\) assign compatible linear maps to every persistent agent and communication link. Consequently, the communication subsystem that survives the topology change is accompanied by a coherent identification of the local state spaces, sensing maps, and communication constraints at both time instants.
\end{example}

\subsubsection{Temporal cellular sheaves and switching topologies}

To model multi-agent systems with switching communication topologies as temporal narratives, we need the category of cellular sheaves to have the required structure. The following lemma, proved in~\cite{temporal-cellular-sheaves2026}, establishes precisely this fact.

\begin{lemma}\label{lem:cellsh-limits}
The category \(\CellSh\) admits pullbacks and pushouts.
\end{lemma}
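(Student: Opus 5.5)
The plan is to view \(\CellSh\) as a Grothendieck construction over incidence categories and to compute limits and colimits in two stages, first on the base and then in the fibre. Write \(\mathbf{Inc}\) for the category of incidence categories \(\incG\) of finite directed graphs, with all functors between them as morphisms. To every \(\incG\) assign the functor category \([\incG,\Vect]\), and to every \(\mathcal T:\incG\to\incH\) assign precomposition \(\mathcal T^*=-\circ\mathcal T:[\incH,\Vect]\to[\incG,\Vect]\). A morphism \((\mathcal T,\alpha):\mathcal G\to\mathcal H\) in \(\CellSh\) is then exactly a pair consisting of a base map \(\mathcal T\) and a fibre map \(\alpha:\mathcal G\Rightarrow\mathcal T^*\mathcal H\). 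So the projection \(\CellSh\to\mathbf{Inc}\) is a Grothendieck fibration, and the cartesian lift of \(\mathcal T\) at \(\mathcal H\) is \((\mathcal T,\id):\mathcal H\circ\mathcal T\to\mathcal H\).

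Since \(\Vect\) is complete and cocomplete and the graphs are finite, each \(\mathcal T^*\) has a left adjoint given by the left Kan extension \(\mathrm{Lan}_{\mathcal T}\). A natural transformation \(\alpha:\mathcal G\Rightarrow\mathcal H\mathcal T\) therefore corresponds to its adjoint transpose \(\mathrm{Lan}_{\mathcal T}\mathcal G\Rightarrow\mathcal H\). This makes the projection an opfibration as well, hence a bifibration.

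\textbf{Pullbacks.} Let \((\mathcal T_i,\alpha_i):\mathcal G_i\to\mathcal H\) for \(i=1,2\). I would proceed in the following order.
\begin{enumerate}
\item Form the pullback \(\mathcal P=\inc{G_1}\times_{\incH}\inc{G_2}\) in \(\mathbf{Inc}\), with projections \(\pi_1,\pi_2\).
\item Pull the three sheaves back to \(\mathcal P\) and take the pullback in the functor category \([\mathcal P,\Vect]\), computed pointwise:
\[
\mathcal G_1\pi_1\times_{\mathcal H\mathcal T_1\pi_1}\mathcal G_2\pi_2 .
\]
Here the two legs are \(\alpha_1\pi_1\) and \(\alpha_2\pi_2\), and they share a codomain because \(\mathcal T_1\pi_1=\mathcal T_2\pi_2\).
\item Verify the universal property by splitting any cone into its base part and its fibre part, as in the proof of Lemma~\ref{lem:cotwisted-pullbacks}. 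The base part factors uniquely through \(\mathcal P\). The fibre part then factors uniquely through the pointwise pullback in \([\mathcal P,\Vect]\), after precomposing along the induced base map. This uses only the standard fact that a fibration whose base has pullbacks, whose fibres have pullbacks, and whose reindexing functors preserve them has pullbacks in its total category.
\end{enumerate}

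\textbf{Pushouts.} Let \((\mathcal T_i,\alpha_i):\mathcal H\to\mathcal G_i\) for \(i=1,2\).
\begin{enumerate}
\item Form the pushout \(\mathcal Q=\inc{G_1}+_{\incH}\inc{G_2}\) with injections \(j_1,j_2\).
\item On \(\mathcal Q\), take the pointwise pushout in \([\mathcal Q,\Vect]\) of the span
\[
\mathrm{Lan}_{j_1}\mathcal G_1\Leftarrow\mathrm{Lan}_{j_1\mathcal T_1}\mathcal H\Rightarrow\mathrm{Lan}_{j_2}\mathcal G_2 .
\]
The legs are the transposes of \(\alpha_1\) and \(\alpha_2\); the middle term is the same for both because \(j_1\mathcal T_1=j_2\mathcal T_2\).
\item Verify the universal property by the dual argument, using the opfibration structure: \(\mathrm{Lan}\) preserves colimits as a left adjoint, and \(\Vect\) has all pushouts.
\end{enumerate}

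\textbf{Main obstacle.} I expect the hard step to be showing that \(\mathbf{Inc}\) itself has pullbacks and pushouts, that is, that the pullback and pushout of incidence categories computed in \(\mathbf{Cat}\) are again incidence categories of finite graphs. The difficulty is that functors \(\mathcal T\) may collapse edges onto vertices, as in the team-aggregation example.

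For pushouts, I would first try to show that every such \(\mathcal T\) is induced by a morphism of reflexive graphs, where an edge may be sent to a loop standing for a vertex. Pushouts in \(\mathbf{Inc}\) would then be computed as pushouts of graphs and take the incidence category. For pullbacks, I would describe the objects of the pullback in \(\mathbf{Cat}\) explicitly: pairs of cells with a common image, of types vertex–vertex, edge–vertex and edge–edge. I would then check that each edge-type object has at most a head and a tail incidence, possibly after identifying degenerate objects with vertices.

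If this check fails for arbitrary functors, the fallback is to restrict \(\mathbf{Inc}\) to the cellular functors (those induced by reflexive graph maps) that are presumably intended in~\cite{temporal-cellular-sheaves2026}. Under that restriction the base limits come directly from \(\Grefl\), which has all finite limits and colimits, and the fibrewise argument above goes through unchanged.
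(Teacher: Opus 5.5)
Your fibrational reduction is correct. The projection $p\colon\CellSh\to\mathbf{Inc}$ is a bifibration with fibres $[\incG,\Vect]$; each $\mathcal T^*$ preserves the pointwise limits, and each $\mathrm{Lan}_{\mathcal T}$ preserves colimits. So everything reduces to the base. (The paper does not prove the lemma here; it cites work in progress. I am judging your argument on its own.)

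The gap is exactly the step you flag, and it cannot be closed. With all functors as morphisms, as in Definition~\ref{def:cellsh}, $\mathbf{Inc}$ has neither all pullbacks nor all pushouts. Let $\mathbf 1$ be the incidence category of one vertex, which is terminal in $\mathbf{Inc}$, and let $\Lambda=(u\to e\leftarrow v)$ be that of one edge.

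\textbf{No product.} A product $\inc{K}=\Lambda\times\Lambda$ in $\mathbf{Inc}$ would need $|\hom(\mathbf 1,\inc{K})|=3^2=9$ objects. It would also need $|\hom(\Lambda,\inc{K})|=11^2=121$, since there are $11$ functors $\Lambda\to\Lambda$: nine with $e\mapsto e$, plus the constants at $u$ and $v$. But a functor $\Lambda\to\inc{K}$ is just a pair of morphisms with common codomain. So there are exactly $|V(K)|+9|E(K)|\le 81$ of them, a contradiction.

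\textbf{No pushout.} Glue the edge object $e$ of one copy of $\Lambda$ to the vertex $u'$ of a second copy, and let $Q$ be the $\mathbf{Cat}$-pushout. A pushout $P\in\mathbf{Inc}$ would come with a universal functor $Q\to P$ through which every functor from $Q$ into an incidence category factors. $Q$ contains the composable pair $u\to e=u'\to e'$. Since $P$ has no composable pair of non-identity morphisms, one of these two must go to an identity, and then it would go to an identity under every functor $Q\to\inc{K}$. Yet the pairs $(\mathrm{const}_u,\id_\Lambda)$ and $(\id_\Lambda,\mathrm{const}_e)$ define functors $Q\to\Lambda$ that keep $u'\to e'$, respectively $u\to e$, non-identity.

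These examples also defeat your two sub-plans. The $\mathbf{Cat}$-pullback contains the chain $(u,u)\to(u,e)\to(e,e)$, so it is not an incidence category. And functors may send vertices to edge objects or swap head and tail, so they need not come from reflexive graph maps.

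The failure transfers to $\CellSh$ itself. The zero-sheaf functor $\inc{G}\mapsto 0_G$ is both left and right adjoint to $p$, because a natural transformation into or out of a zero functor is unique. Hence $p$ preserves whatever pullbacks and pushouts exist in $\CellSh$. Consequently $0_\Lambda\to 0_{\mathbf 1}\leftarrow 0_\Lambda$ has no pullback in $\CellSh$, and the corresponding span of zero sheaves has no pushout. So the lemma is false for $\CellSh$ as defined, and your fallback is forced rather than optional.

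The fallback does work. Restrict to the wide subcategory whose $\mathcal T$ have the form $\inc{\phi}$ for reflexive graph maps $\phi$; this still contains the subsystem, team-aggregation and switching examples. Its base is equivalent to finite reflexive graphs, which have finite limits and colimits, and your fibrewise argument then applies unchanged. One caveat: step~1 must use the limits and colimits of $\Grefl$, not those of $\mathbf{Cat}$. For instance, the product of two edges in $\Grefl$ is a square with a diagonal, not the $\mathbf{Cat}$-product $\Lambda\times\Lambda$. What you can prove is this corrected statement, and the restriction on morphisms should be written explicitly into the definition of $\CellSh$.
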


Consequently, all the constructions developed in Section~\ref{subsec:fixed-points} apply with
\(\D=\CellSh\).

\begin{definition}[Temporal cellular sheaf]
A \emph{temporal cellular sheaf} is a \(\CellSh\)-valued narrative, that is, a sheaf
\[
\mathcal F:\cat T^{op}\longrightarrow\cotw{\CellSh}.
\]
\end{definition}

Thus, each interval \([a,b]\in\cat T\) is assigned a morphism of cellular sheaves \(P_a^b\xrightarrow{\gamma_a^b}C_a^b,\) where \(P_a^b\) and \(C_a^b\) model the persistent and cumulative aspects of the multi-agent system over the interval \([a,b]\), respectively, while \(\gamma_a^b\) relates the two descriptions. The sheaf condition reconstructs the persistent component by pullbacks and the cumulative component by pushouts, so that both the communication topology and the associated sensing and interaction maps evolve coherently through time.

The principal advantage of temporal cellular sheaves is that they model not only the communication topology at each time instant but also the structural relationships between communication topologies across time intervals. Thus, instead of viewing a switching multi-agent system as a sequence of independent communication graphs, the narrative records how information persists and accumulates as the communication architecture evolves.

Within this framework, distributed coordination problems such as consensus, formation control, and target tracking can be formulated categorically. The dynamics of the agents evolve on the vector spaces assigned by the cellular sheaves, while the temporal narrative specifies how these local dynamical models are related as the communication topology changes.

Current work investigates the stability theory of temporal cellular sheaves, building on recent sheaf-theoretic approaches to distributed control together with classical Lyapunov methods for switching systems. The objective is to characterize how changes in the communication topology interact with the evolution of distributed state variables, and to establish sufficient conditions under which target-tracking errors remain bounded and converge despite topology switches. This research program is currently being developed in~\cite{temporal-cellular-sheaves2026}.

\section{Conclusion and future directions}
\label{section:future}

One of the strengths of the narrative framework is that it is largely independent of the nature of the objects evolving through time.  Once an appropriate target category has been identified, the same theory immediately yields a coherent framework for modeling temporal phenomena across a wide range of applications.  The three research directions presented in this chapter illustrate this flexibility from complementary perspectives: the first investigates the categorical structure of the persistence--accumulation adjunction through the category of narratives and its induced factorization of the adjunction, the second explores temporal analogues of structured decompositions, and the third instantiates the framework in the category of cellular sheaves to model multi-agent systems with switching communication topologies. 

\paragraph{Towards a characterization of the fixed points of the adjunction}
The factorization of the persistence--accumulation adjunction through the category of narratives introduces a setting in which persistence and accumulation are no longer viewed as separate constructions, but as two complementary descriptions of the same temporal object. The resulting rigidity classification measures the extent to which these descriptions determine one another. Rigid narratives coincide with the canonical completions induced by the adjunction in both directions. Left-rigid narratives preserve their persistent description under the persistence--accumulation round trip, right-rigid narratives preserve their cumulative description, while loose narratives lose information in both directions. 

The rigidity classification also opens several directions for future research. A first natural question is to determine which rigidity classes arise in a given ambient category and how they reflect its structural properties. More generally, one may seek intrinsic categorical characterizations of left, right, and rigid narratives, as well as investigate how rigidity behaves under products, limits, colimits, functorial changes of the ambient category, and other categorical constructions. Beyond their intrinsic categorical interest, these questions contribute to a broader understanding of information-preserving changes of perspective, including those arising in data science.

\paragraph{Structured decompositions of cumulative narratives}

By virtue of the tight relation between persistent and cumulative narratives, as revealed in Section~\ref{subsection:cumulative-and-persistent-perspectives}, it is desirable to temporalize structured decompositions and width also from the cumulative point of view. 
More specifically, under dual assumptions of Theorem~\ref{thm:termporalized-sd-category}, we may form a pullback square
	\begin{equation*}
	\begin{tikzcd}
		\Cu \times_{\cat{Cu}(\cat{S},\cat{D})}\cat{Cu}(\cat{S},\Omega_n) \arrow[r] \arrow[d, "\pi_n", swap]
		\arrow[dr, phantom, "\usebox\pullbacksquare", very near start, color=black]
		& \cat{Cu}(\cat{S},\Omega_n) \arrow[d, "{\cat{Cu}(\cat{S},\iota_n)}"]
		\\ \Cu \arrow[r, "{\cat{Cu}(\tau,\cat{D})}", swap]
		& \cat{Cu}(\cat{S},\cat{D}).
	\end{tikzcd}
	\end{equation*}
	We define $\check{\Omega}_n\subseteq \Cu$ to be the image category of the functor~$\pi_n$. If we dualize the assumptions of Theorem~\ref{thm:temporalization-of-spined-sd-category}, do we again obtain a spined sd-category of the form $(\Cu,\cG,\check{\Omega})$? How does the proof change?

A second direction is to investigate the relationship between the persistent and cumulative temporalizations of spined sd-categories.
As exposed in Theorem~\ref{thm:adjunction}, given any time category~$\cat{T}$ and a category~$\D$ that is both complete and cocomplete, then there exists a pair of adjoint functors 
	\begin{equation*}
	\begin{tikzcd}
		\Pe
		\arrow[bend left=30]{r}
		\arrow[r, phantom, "\raisebox{-0.2ex}{\rotatebox{270}{$\dashv$}}"]
		& \Cu.
		\arrow[bend left=30]{l}
	\end{tikzcd}
	\end{equation*} 
It would be interesting to investigate the following questions, assuming the setting of Theorem~\ref{thm:temporalization-of-spined-sd-category} together with the dual assumptions:
	
\begin{itemize}
    \item 	Are the above adjoint functors sd-functors in the sense of \cite[Definition 2.7.1]{Bumpus-et-al_Structured-Decompositions}? 
	
	\item Are they even width-preserving in the sense of \cite[Definition 2.7.8]{Bumpus-et-al_Structured-Decompositions}? 
	
	\item If not, can this be guaranteed by modifying the assumptions appropriately?
\end{itemize}
	
	It would be desirable to have an adjoint pair of width-preserving sd-functors between the persistent and cumulative temporalized spined sd-categories. This would not only reveal a fundamental relation between the persistent and cumulative perspectives but also allow a convenient transfer between concepts and results for persistent narratives to cumulative ones and vice versa.

\paragraph{Temporal cellular sheaves: modelling multi-agent systems with switching topology}
Temporal cellular sheaves demonstrate how the narrative framework applies naturally to heterogeneous multi-agent systems with switching communication topologies. Choosing cellular sheaves as the target category allows the framework to encode not only the evolution of the communication architecture but also the heterogeneous sensing, communication, and information-processing structures associated with it. This establishes a categorical foundation for studying distributed control problems on time-varying communication networks.

A natural next step is to endow temporal cellular sheaves with dynamical systems evolving on the vector spaces assigned to their cells. This raises the problem of understanding how the dynamics interact with topology changes and with the persistent and cumulative structures encoded by the narrative. Of particular interest is the development of a stability theory for the sheaf-theoretic description of switching communication networks, leading to conditions under which distributed coordination objectives---such as consensus, formation maintenance, or target tracking---remain stable despite changes in the communication topology. This research program is currently under development in~\cite{temporal-cellular-sheaves2026}.

\backmatter

\section*{Declarations}

\paragraph{Funding}
Benjamin Merlin Bumpus was supported by the São Paulo Research Foundation (FAPESP), grant 2025/16921-5.

\paragraph{Conflict of interest}
The authors declare that they have no competing interests.

\paragraph{Data availability}
No datasets were generated or analyzed during the current study.

\paragraph{Materials availability}
Not applicable.

\paragraph{Code availability}
Not applicable.

\paragraph{Author contributions}
The authors' contributions are described according to the CRediT (Contributor Roles Taxonomy) as follows. Conceptualization: W.L., B.B. Methodology: W.L., B.B. Formal analysis: W.L., B.B., J.N., J.G. Investigation: W.L., B.B., J.N., J.G. Resources: All authors. Writing—original draft: W.L., B.B., J.N. Writing-review and editing: All authors. Supervision: W.L., B.B., J.F., W.D. Project administration: W.L., B.B. Funding acquisition: W.L., B.B., J.N., J.G., J.F., W.D.

\setlength{\bibsep}{9.5pt}
\bibliography{sn-bibliography}% common bib file

@article{Atluri2018,
  author  = {Gowtham Atluri and Anuj Karpatne and Vipin Kumar},
  title   = {Spatio-Temporal Data Mining: A Survey of Problems and Methods},
  journal = {ACM Computing Surveys},
  volume  = {51},
  number  = {4},
  pages   = {83:1--83:41},
  year    = {2018},
  doi     = {10.1145/3161602}
}

@inproceedings{niu2026temporal,
  author    = {Niu, Nelson and Osgood, Nathaniel D. and
               Szelko, Jacob S. and Srinivasan, Priyaa Varshinee},
  title     = {Temporal Sheaf Theory for Reconciling Temporal
               Complexity within Public Health Modelling},
  booktitle = {Proceedings of the Ninth International Conference
               on Applied Category Theory},
  year      = {2026},
  note      = {Accepted for publication},
  url       = {https://actconf2026.github.io/papers/ACT\_2026\_paper\_51.pdf}
}

@article{LaxmanSastry2006,
  author    = {Laxman, Srivatsan and Sastry, P. S.},
  title     = {A Survey of Temporal Data Mining},
  journal   = {Sadhana},
  year      = {2006},
  volume    = {31},
  number    = {2},
  pages     = {173--198},
  doi       = {10.1007/BF02719780},
  issn      = {0973-7677},
  url       = {https://doi.org/10.1007/BF02719780}
}

@article{RODDICK1992249,
title = {Temporal semantics in information systems—A survey},
journal = {Information Systems},
volume = {17},
number = {3},
pages = {249-267},
year = {1992},
issn = {0306-4379},
doi = {10.1016/0306-4379(92)90016-G},
url = {https://www.sciencedirect.com/science/article/pii/030643799290016G},
author = {John F Roddick and Jon D Patrick}
}

@article{RoddickSpiliopoulou2002,
  author  = {Roddick, John F. and Spiliopoulou, Myra},
  title   = {A Survey of Temporal Knowledge Discovery Paradigms and Methods},
  journal = {IEEE Transactions on Knowledge and Data Engineering},
  year    = {2002},
  volume  = {14},
  number  = {4},
  pages   = {750--767},
  doi     = {10.1109/TKDE.2002.1019212}
}

@misc{decomposingtimevaryingdata2026,
  author        = {Bumpus, Benjamin Merlin and Nickel, Jana K.},
  title         = {Decomposing time-varying data into simple pieces: structured decompositions of narratives},
  year          = {2026},
  eprint        = {2607.10442},
  archiveprefix = {arXiv},
  primaryclass  = {math.CT},
  doi = {10.48550/arXiv.2607.10442}
}

@book{fundamentalsOfAlgebraicGraphTransformation,
  title     = {Fundamentals of Algebraic Graph Transformation},
  author    = {Ehrig, Hartmut and Ehrig, Karsten and Prange, Ulrike and Taentzer, Gabriele},
  year      = {2006},
  publisher = {Springer Berlin Heidelberg},
  series    = {Monographs in Theoretical Computer Science. An EATCS Series},
  address   = {Berlin, Heidelberg},
  isbn      = {978-3-540-31187-4},
  doi       = {10.1007/3-540-31188-2}
}

@InProceedings{LackAdhesive,
author="Lack, S. and Sobocinski, P.",
editor="Walukiewicz, Igor",
title="Adhesive Categories",
booktitle="Foundations of Software Science and Computation Structures",
year="2004",
publisher="Springer Berlin Heidelberg",
address="Berlin, Heidelberg",
pages="273--288",
isbn="978-3-540-24727-2", 
doi={https://doi.org/10.1007/978-3-540-24727-2\_20}
}

@misc{Bumpus-et-al_Structured-Decompositions,
	title = {Structured Decompositions: Structural and Algorithmic Compositionality}, 
	author = {Benjamin Merlin Bumpus and Zoltan A. Kocsis and Jade Edenstar Master  and Emilio Minichiello},
	year = {2025},
	eprint = {2207.06091v7},
	archivePrefix = {arXiv},
	primaryClass = {math.CT},
	url = {https://arxiv.org/abs/2207.06091v7}, 
}

@article{llanos19,
  author =	 {Eugenio J. Llanos and Wilmer Leal and Duc H. Luu and
                  J{\"u}rgen Jost and Peter F. Stadler and Guillermo
                  Restrepo},
  title =	 {Exploration of the Chemical Space and Its Three
                  Historical Regimes},
  journal =	 {Proceedings of the National Academy of Sciences},
  volume =	 116,
  number =	 26,
  pages =	 {12660-12665},
  year =	 2019,
  doi =		 {10.1073/pnas.1816039116},
  url =		 {https://doi.org/10.1073/pnas.1816039116},
  DATE_ADDED =	 {Thu Sep 30 18:45:43 2021},
}

@article{theorytimevaryingdata2026,
  author       = {Benjamin Merlin Bumpus and Wilmer Leal and James Fairbanks and Martti Karvonen and Fr{\'e}d{\'e}ric Simard},
  title        = {Towards a Unified Theory of Time-Varying Data},
  journal      = {Applied Categorical Structures},
  year         = {2026},
  volume       = {34},
  number       = {3},
  pages        = {23},
  doi          = {10.1007/s10485-026-09860-4},
  url          = {https://doi.org/10.1007/s10485-026-09860-4},
  issn         = {1572-9095}
}

@phdthesis{Ames_Hybrid-Systems,
    author = {Ames, Aaron David},
    title = {A Categorical Theory of Hybrid Systems},
    year = {2006},
    school = {University of California, Berkeley},
    url = {https://www2.eecs.berkeley.edu/Pubs/TechRpts/2006/EECS-2006-165.pdf}
}

@article{Arnborg-Proskurowski,
	title = {Linear time algorithms for {NP}-hard problems restricted to partial $k$-trees},
	author = {Stefan Arnborg and Andrzej Proskurowski},
	year = {1989},
	journal = {Discrete Applied Mathematics},
	volume = {23},
	number = {1},
	pages = {11--24},
	url = {https://doi.org/10.1016/0166-218X(89)90031-0},
}

@article{Bass,
	title = {Covering theory for graphs of groups},
	author = {Hyman Bass},
	year = {1993},
	journal = {Journal of Pure and Applied Algebra},
	number = {89},
	pages = {3--47},
}

@misc{Carmesin-et-al_Graph-decompositions,
	author={Johannes Carmesin and Raphael W. Jacobs and Paul Knappe and Jan Kurkofka},
	title={Canonical graph decompositions and local separations: From infinite coverings to a finite combinatorial theory}, 
	year={2025},
	eprint={2501.16170v1},
	archivePrefix={arXiv},
	primaryClass={math.CO},
	url={https://arxiv.org/abs/2501.16170v1}, 
}

@inproceedings{Casteigts-et-al,
author="Casteigts, Arnaud
and Flocchini, Paola
and Quattrociocchi, Walter
and Santoro, Nicola",
editor="Frey, Hannes
and Li, Xu
and Ruehrup, Stefan",
title="Time-Varying Graphs and Dynamic Networks",
booktitle="Ad-hoc, Mobile, and Wireless Networks",
year="2011",
publisher="Springer Berlin Heidelberg",
address="Berlin, Heidelberg",
pages="346--359",
isbn="978-3-642-22450-8"
}

@article{Courcelle,
	title = {The monadic second-order logic of graphs. I. Recognizable sets of finite graphs},
	author = {Bruno Courcelle},
	year = {1990},
	journal = {Information and Computation},
	volume = {85},
	number = {1},
	pages = {12-75},
	url = {https://www.sciencedirect.com/science/article/pii/089054019090043H},
}

@article{Dechter-Pearl,
	title = {Tree clustering for constraint networks},
	author = {Rina Dechter and Judea Pearl},
	year = {1989},
	journal = {Artificial Intelligence},
	volume = {38},
	number = {3},
	pages = {353-366},
	url = {https://www.sciencedirect.com/science/article/pii/0004370289900374},
}

@misc{Diestel-et-al_Graph-decompositions,
	author={Reinhard Diestel and Raphael W. Jacobs and Paul Knappe and Jan Kurkofka},
	title = {Canonical graph decompositions via coverings}, 
	year={2025},
	eprint={2207.04855v8},
	archivePrefix={arXiv},
	primaryClass={math.CO},
	url={https://arxiv.org/abs/2207.04855v8}, 
}

@article{Fujita_Width,
	author = {Takaaki Fujita},
	title = {A Brief Overview of Applications of Tree-Width and Other Graph
	Width Parameters},
	year = {2025},
	journal = {Applied Mathematics on Science and Engineering},
	volume = {2},
	number = {1},
	pages = {1--20},
}

@article{Halin,
	title = {S-functions for graphs},
	author = {Rudolf Halin},
	year = {1976},
	journal = {Journal of Geometry},
	volume = {8},
	pages = {171--186},
	url = {https://api.semanticscholar.org/CorpusID:120256194},
}

@article{Harary-Gupta,
	author = {F.~Harary and G.~Gupta},
	title = {Dynamic graph models},
	year = {1997},
	journal = {Mathematical and Computer Modelling},
	volume = {25},
	number = {7},
	pages = {79--87},
}

@article{Holme,
	author = {Petter Holme},
	title = {Modern temporal network theory: a colloquium},
	year = {2015},
	journal = {The European Physical Journal B},
	volume = {88},
	number = {234},
}

@article{Holme-Saramaeki,
	author = {Petter Holme and Jari Saramäki},
	title = {Temporal networks},
	year = {2012},
	journal = {Physics Reports},
	volume = {519},
	number = {3},
	pages = {97--125},
}

@misc{Hu,
	title = {Cellular Sheaves on Higher-Dimensional Structures}, 
	author = {Chuan-Shen Hu},
	year = {2025},
	eprint = {2505.23993v3},
	archivePrefix = {arXiv},
	primaryClass = {math.AT},
	url = {https://arxiv.org/abs/2505.23993v3}, 
}

@article{Kempe-et-al,
	author = {David Kempe and Jon Kleinberg and Amit Kumar},
	title = {Connectivity and Inference Problems for Temporal Networks},
	year = {2002},
	journal = {Journal of Computer and System Sciences},
	volume = {64},
	number = {4},
	pages = {820--842},
}

@article{Lauritzen-Spiegelhalter,
	title = {Local Computations with Probabilities on Graphical Structures and Their Application to Expert Systems},
	author = {S.L. Lauritzen and D.J. Spiegelhalter},
	year = {1988},
	journal = {Journal of the Royal Statistical Society: Series B (Methodological)},
	volume = {50},
	number = {2},
	pages = {157--194},
	url = {https://doi.org/10.1111/j.2517-6161.1988.tb01721.x},
}

@article{Liu,
	title = {A Tree Model for Sparse Symmetric Indefinite Matrix Factorization},
	author = {Joseph W.H. Liu},
	year = {1988},
	journal = {SIAM Journal on Matrix Analysis and Applications},
	volume = {9},
	number = {1},
	pages = {26-39},
	url = {https://doi.org/10.1137/0609003},
}

@article{Michail,
	author = {Othon Michail},
	title = {An Introduction to Temporal Graphs: An Algorithmic Perspective},
	year = {2015},
	journal = {Internet Mathematics},
	volume = {12},
}

@article{Robertson-Seymour_Graph-minors-III,
	title = {Graph minors. III. Planar Tree-Width},
	author = {Neil Robertson and P.D Seymour},
	year = {1984},
	journal = {Journal of Combinatorial Theory, Series B},
	volume = {36},
	number = {1},
	pages = {49-64},
	url = {https://www.sciencedirect.com/science/article/pii/0095895684900133},
}

@article{Robertson-Seymour_Graph-minors-XIII,
	author={Neil Robertson and P.D.~Seymour},
	title={Graph minors. XIII. The Disjoint Path Problem},
	year={1995},
	journal={Journal of Combinatorial Theory, Series B},
	volume={63},
	number={1},
	pages={65--110},
}

@article{Robertson-Seymour_Graph-minors-XVII,
	author={Neil Robertson and P.D.~Seymour},
	title={Graph minors. XVII. Taming a Vortex},
	year={1999},
	journal={Journal of Combinatorial Theory, Series B},
	volume={77},
	number={1},
	pages={162--210},
}

@article{Robertson-Seymour-Thomas,
	author={Neil Robertson and Paul Seymour and Robin Thomas},
	title={Sachs' Linkless Embedding Conjecture},
	year={1995},
	journal={Journal of Combinatorial Theory, Series B},
	volume={64},
	number={2},
	pages={185--227},
}

@article{Serre_Arbres,
	title = {Arbres, amalgames, $SL_2$},
	author = {Jean-Pierre Serre},
	year = {1977},
	journal = {Astérisque, Société Mathématique de France, Paris},
	number = {46},
	note = {Rédigé avec la collaboration de Hyman Bass},
	url = {https://www.numdam.org/item/AST_1983__46__1_0.pdf},
}

@book{Serre_Trees,
  title     = {Trees},
  author    = {Serre, Jean-Pierre},
  publisher = {Springer-Verlag},
  address   = {Berlin, Heidelberg},
  year      = {2003},
  series    = {Springer Monographs in Mathematics},
  isbn      = {3-540-44237-5}
}

@inproceedings{Yannakakis,
	title = {Algorithms for Acyclic Database Schemes},
	author = {Mihalis Yannakakis},
	year = {1981},
	booktitle = {Proceedings of the Seventh International Conference on Very Large Data Bases (VLDB '81)},
	address = {Cannes, France},
	publisher = {IEEE Computer Society},
	pages = {82--94},
}

@unpublished{temporal-cellular-sheaves2026,
  author       = {Austin Copeland and Wilmer Leal and Brandon Fallin and Benjamin Merlin Bumpus and James Fairbanks and Warren E. Dixon},
  title        = {A Temporal Cellular Sheaf Framework for Multi-Agent Systems with Switching Topologies},
  year         = {2026},
  note         = {Work in progress},
}

@book{MesbahiEgerstedt2010,
  title     = {Graph Theoretic Methods in Multiagent Networks},
  author    = {Mesbahi, Mehran and Egerstedt, Magnus},
  year      = {2010},
  publisher = {Princeton University Press},
  address   = {Princeton, NJ},
  series    = {Princeton Series in Applied Mathematics},
  isbn      = {978-0-691-14061-2},
}

@ARTICLE{OlfatiSaber2007,
  author={Olfati-Saber, Reza and Fax, J. Alex and Murray, Richard M.},
  journal={Proceedings of the IEEE}, 
  title={Consensus and Cooperation in Networked Multi-Agent Systems}, 
  year={2007},
  volume={95},
  number={1},
  pages={215-233},
  doi={10.1109/JPROC.2006.887293}}

@book{Bullo2009,
  author    = {F. Bullo and J. Cort{\'{e}}s and S. Mart{\'{i}}nez},
  title     = {Distributed Control of Robotic Networks},
  publisher = {Princeton University Press},
  year      = {2009},
  series    = {Princeton Series in Applied Mathematics},
  address   = {Princeton, NJ, USA},
  isbn      = {978-0-691-14195-4}
}

@ARTICLE{Moreau2005,
  author={Moreau, L.},
  journal={IEEE Transactions on Automatic Control}, 
  title={Stability of multiagent systems with time-dependent communication links}, 
  year={2005},
  volume={50},
  number={2},
  pages={169-182},
  doi={10.1109/TAC.2004.841888}}

@inproceedings{BaiGeorgeChakrabortty2020,
  author    = {He Bai and Jemin George and Aranya Chakrabortty},
  title     = {Hierarchical Control of Multi-Agent Systems using Online Reinforcement Learning},
  booktitle = {Proceedings of the 2020 American Control Conference (ACC)},
  pages      = {340--345},
  year       = {2020},
  address    = {Denver, CO, USA},
  publisher  = {IEEE},
  doi        = {10.23919/ACC45564.2020.9147797}
}

@article{SCATTOLINI2009723,
title = {Architectures for distributed and hierarchical Model Predictive Control – A review},
journal = {Journal of Process Control},
volume = {19},
number = {5},
pages = {723-731},
year = {2009},
issn = {0959-1524},
doi = {10.1016/j.jprocont.2009.02.003},
url = {https://www.sciencedirect.com/science/article/pii/S0959152409000353},
author = {Riccardo Scattolini}
}

@article{MELIKER20111,
title = {Spatio-temporal epidemiology: Principles and opportunities},
journal = {Spatial and Spatio-temporal Epidemiology},
volume = {2},
number = {1},
pages = {1-9},
year = {2011},
issn = {1877-5845},
doi = {10.1016/j.sste.2010.10.001},
url = {https://www.sciencedirect.com/science/article/pii/S1877584510000407},
author = {Jaymie R. Meliker and Chantel D. Sloan}
}

@misc{habereder2025,
      title={A Systematic Review of Spatio-Temporal Statistical Models: Theory, Structure, and Applications}, 
      author={Isabella Habereder and Thomas Kneib and Isao Echizen and Timo Spinde},
      year={2025},
      eprint={2511.00422},
      archivePrefix={arXiv},
      primaryClass={stat.AP},
      url={https://arxiv.org/abs/2511.00422}, 
}

@article{Teich2025,
    doi = {10.1371/journal.pcsy.0000058},
    author = {Teich, Marie AND Leal, Wilmer AND Jost, Jürgen},
    journal = {PLOS Complex Systems},
    publisher = {Public Library of Science},
    title = {Diachronic data analysis supports and refines conceptual metaphor theory},
    year = {2025},
    month = {08},
    volume = {2},
    note      = {Article e0000058},
    url = {https://doi.org/10.1371/journal.pcsy.0000058},
    pages       = {e0000058},
    number = {8}
}

@article{CHOI2021105189,
title = {Short-term probabilistic forecasting of meso-scale near-surface urban temperature fields},
journal = {Environmental Modelling \& Software},
volume = {145},
pages = {105189},
year = {2021},
issn = {1364-8152},
doi = {10.1016/j.envsoft.2021.105189},
url = {https://www.sciencedirect.com/science/article/pii/S1364815221002310},
author = {Byeongseong Choi and Mario Bergés and Elie Bou-Zeid and Matteo Pozzi}
}

@Article{Laubichler2013,
author={Laubichler, Manfred D.
and Maienschein, Jane
and Renn, J{\"u}rgen},
title={Computational Perspectives in the History of Science: To the Memory of Peter Damerow},
journal={Isis},
year={2013},
month={03},
publisher={[The University of Chicago Press, The History of Science Society]},
volume={104},
number={1},
pages={119-130},
doi={10.1086/669891},
url={https://doi.org/10.1086/669891}
}

@book{miritello2013temporal,
  title={Temporal patterns of communication in social networks},
  author={Miritello, Giovanna},
  year={2013},
  publisher={Springer International Publishing},
  series={Springer Theses},
  address={Cham, Switzerland},
  isbn={978-3-319-00110-4},
  doi={10.1007/978-3-319-00110-4}
}

@INPROCEEDINGS{9482931,
  author={Zegers, Federico M. and Phillips, Sean and Dixon, Warren E.},
  booktitle={2021 American Control Conference (ACC)}, 
  title={Consensus over Clustered Networks with Asynchronous Inter-Cluster Communication}, 
  year={2021},
  volume={},
  number={},
  pages={4249-4254},
  doi={10.23919/ACC50511.2021.9482931}
  }

@article{johnstone1999note,
  title={A note on discrete {C}onduch{\'e} fibrations},
  author={Johnstone, Peter},
  journal={Theory and Applications of Categories},
  volume={5},
  number={1},
  pages={1--11},
  year={1999}
}

@inproceedings{CurrierLealEtAl2026,
  author    = {Keith Currier and Wilmer Leal and Brandon Fallin and James Fairbanks and Warren E. Dixon},
  title     = {From Local to Global: Sheaf-Theoretic Control Barrier Functions on Manifolds},
  booktitle = {Proceedings of the IEEE 65th Conference on Decision and Control (CDC)},
  year      = {2026},
  publisher = {IEEE},
  address   = {Honolulu},
}

@INPROCEEDINGS{distributed-mas-coordination-2026,
  author={Hanks, Tyler and Riess, Hans and Cohen, Samuel and Gross, Trevor and Hale, Matthew and Fairbanks, James},
  booktitle={2025 IEEE 64th Conference on Decision and Control (CDC)}, 
  title={Distributed Multi-Agent Coordination over Cellular Sheaves}, 
  year={2025},
  volume={},
  number={},
  pages={3057-3064},
  doi={10.1109/CDC57313.2025.11312066}
}

@misc{hanks2025heterogeneousmultiagent,
      title={Heterogeneous Multi-Agent Multi-Target Tracking using Cellular Sheaves}, 
      author={Tyler Hanks and Cristian F. Nino and Joana Bou Barcelo and Austin Copeland and Warren Dixon and James Fairbanks},
      year={2025},
      eprint={2512.24886},
      archivePrefix={arXiv},
      primaryClass={eess.SY},
      url={https://arxiv.org/abs/2512.24886}, 
}

@book{borges1941,
  title     = {El jardín de senderos que se bifurcan},
  author    = {Borges, Jorge Luis},
  year      = {1941},
  publisher = {Editorial Sur},
  address   = {Buenos Aires}
}
%% if required, the content of .bbl file can be included here once bbl is generated
%%\input sn-article.bbl

\end{document}